\renewcommand\theequation{\thesection.\arabic{equation}}
\newcommand{\BA}{{\mathbb {A}}}
\newcommand{\BC}{{\mathbb {C}}}
\newcommand{\BZ}{{\mathbb {Z}}}
\newcommand{\CH}{{\mathcal {H}}}
\newcommand{\Ad}{{\mathrm{Ad}}}
\newcommand{\GL}{{\mathrm{GL}}}
\newcommand{\Mat}{{\mathrm{Mat}}}
\newcommand{\Sp}{{\mathrm{Sp}}}
\newcommand{\Stab}{{\mathrm{Stab}}}
\newcommand{\tr}{{\mathrm{tr}}}
\newcommand{\ovl}{\overline}
\newcommand{\udl}{\underline}
\newcommand{\wt}{\widetilde}
\newcommand{\ol}{\overline}
\newcommand{\ul}{\underline}
\newcommand{\bs}{\backslash}
\def\bks{{\backslash}}
\def\diag{{\rm diag}}
\def\tilsp{{\widetilde{\Sp}}}
\newtheorem{thm}{Theorem}[section]
\newtheorem{cor}[thm]{Corollary}
\newtheorem{lem}[thm]{Lemma}
\newtheorem{prop}[thm]{Proposition}
\newtheorem {ques/conj}[thm]{Question/Conjecture}
\newtheorem{rmk}[thm]{Remark}
\newtheorem{exmp}[thm]{Example}
\newcommand{\Rmnum}[1]{\expandafter\@slowromancap\romannumeral #1@}
\begin{document}
\renewcommand{\theequation}{\arabic{equation}}
\numberwithin{equation}{section}

\title[Special Unipotent Orbits and Fourier Coefficients]{On Special Unipotent Orbits and Fourier Coefficients for Automorphic Forms on Symplectic Groups}

\author{Dihua Jiang}
\address{School of Mathematics\\
University of Minnesota\\
Minneapolis, MN 55455, USA}
\email{dhjiang@math.umn.edu}

\author{Baiying Liu}
\address{Department of Mathematics\\
University of Utah\\
Salt Lake City, UT 84112, USA}
\email{liuxx969@umn.edu}

%    General info
\subjclass[2000]{Primary 11F70, 22E55; Secondary 11F30.}

\date{\today}

\dedicatory{In memory of Steve Rallis}

\keywords{Fourier coefficients, unipotent orbits, automorphic forms}
\thanks{The work of the first named author is supported in part by the NSF Grant DMS--1001672 and DMS--1301567}

\begin{abstract}
Fourier coefficients of automorphic representations $\pi$ of $\Sp_{2n}(\BA)$ are attached to unipotent adjoint orbits in $\Sp_{2n}(F)$, where
$F$ is a number field and $\BA$ is the ring of adeles of $F$. We prove that for a given $\pi$, all maximal unipotent orbits that gives nonzero
Fourier coefficients of $\pi$ are special, and prove, under a well acceptable assumption, that if $\pi$ is cuspidal, then
the stabilizer attached to each of those maximal unipotent orbits is $F$-anisotropic as algebraic group over $F$. These results
strengthen, refine and extend the earlier work of Ginzburg, Rallis and Soudry on the subject. As a consequence, we obtain constraints on
those maximal unipotent orbits if $F$ is totally imaginary, further applications of which to the discrete spectrum with the
Arthur classification will be considered in our future work.
\end{abstract}

\maketitle

%\tableofcontents

\section{Introduction}

Fourier coefficients of automorphic representations of a reductive algebraic group $G$ over a number field $F$ have been used by Steve
Rallis in most of his mathematics work. Motivated by his work on constructions of integral representations for various automorphic $L$-functions,
and on constructions of automorphic descents for the Langlands functorial relations between classical groups and the general linear groups,
Rallis with Ginzburg and Soudry started in their 2003 paper (\cite{GRS03}) a systematic investigation of Fourier coefficients of automorphic
representations of the symplectic group $\Sp_{2n}(\BA)$, where $\BA$ is the ring of adeles of $F$, attached to unipotent adjoint orbits
in $\Sp_{2n}(F)$, and obtained some basic structures of the Fourier coefficients.

In this paper, we follow \cite{GRS03} and also the work of Ginzburg (\cite{G06}) to obtain more refined structures and properties of
Fourier coefficients of automorphic representations of $\Sp_{2n}(\BA)$. The results obtained here can be viewed as natural
extensions of their earlier work. Following the general framework of \cite{J14}, they are expected to have more impacts to the
discrete spectrum of square-integrable automoprhic forms with the Arthur classification (\cite{Ar13} and \cite{Mk14}).

We recall some basics of Fourier coefficients of automorphic representations in Section 2. In order to strengthen and extend the results
from \cite{GRS03} and \cite{G06}, we prove two properties of Fourier coefficients attached to composite partitions (Propositions 3.2 and 3.3),
including a complete proof of a technical lemma (Lemma 3.1), which is Lemma 2.6 of \cite{GRS03}. In Section 4, we prove one of the main results
of this paper (Theorem 4.1) that for any irreducible automorphic representation $\pi$ of $\Sp_{2n}(\BA)$, if $\pi$ has
a nonzero Fourier coefficient attached to a symplectic partition $\udl{p}$, then it has also a nonzero Fourier coefficient attached to
the symplectic extension $\udl{p}^{\Sp_{2n}}$ of $\udl{p}$. This implies immediately that if $\udl{p}$ is a maximal partition
to which a nonzero Fourier coefficient of $\pi$ attached, then $\udl{p}$ is special. This result is motivated by the corresponding
local result by M{\oe}glin in \cite{M96} and was proved in \cite{GRS03}. We provide here an algorithm which sees an inductive way to
give a complete proof. In the last section (Section 5), we prove our second main result (Theorem 5.2) that
for a maximal partition $\udl{p}$ to which a nonzero Fourier coefficient of an irreducible cuspidal automorphic representation $\pi$ is attached,
the stabilizer in a Levi subgroup $L_{\udl{p}}$ of the corresponding character $\psi_{\udl{p}.\udl{a}}$ is $F$-anisotropic. The notation
will be defined Section 2. In fact, we prove this result under an assumption that for an irreducible cuspidal automorphic representation $\pi$,
there exists a unique maximal partition to which a nonzero Fourier coefficient of $\pi$ is attached. We are going to investigate this
assumption in our future work. We also refer to \cite{G06} for a more general conjecture on this issue. When the group is $\GL_n$,
this uniqueness was proved by Piatetski-Shapiro in \cite{PS79} and by Shalika in \cite{Sl74} for $\pi$ to be cuspidal, and
by the authors in \cite{JL13} for $\pi$ to be in the discrete spectrum of $\GL_n$. One of the interesting consequences of
Theorem 5.2 is to provide constraints on the maximal partitions $\udl{p}$ to which a nonzero Fourier coefficient of an irreducible cuspidal
automorphic representation $\pi$ is attached (Theorem 5.3). Further application of this result to the understanding of discrete spectrum with
the Arthur classification will be considered in our future work within the framework of \cite{J14}.

We would like to thank David Vogan for stimulating conversation on the topic, which brings our attention to the statement of Theorem 5.2, and
for bringing our attention to his student's thesis \cite{H11}. We would like to thank Joseph Hundley and Monica Nevins for helpful discussions on relevant topics.
We would also like to thank the referee for the useful suggestions and comments on an early version of this paper.

%%%%%%%%%%%%%%%%%%%%%%%%%%%%%%%%%%%
%%%%%%%%%%%%%%%%%%%%%%%%%%%%%%%%%%%%%

\section{Fourier Coefficients For Symplectic Groups}

We recall the definition of Fourier coefficients
attached to unipotent orbits of symplectic groups. Let $G_n=\Sp_{2n}$ be the symplectic group with a Borel subgroup $B=TU$, where the
maximal torus $T$ consists of all diagonal matrices of form:
$
\diag(t_1,\cdots,t_n;t_n^{-1},\cdots,t_1^{-1})
$
and the unipotent radical of $B$ consists all upper unipotent matrices in $\Sp_{2n}$.

Let $F$ be a number field and $\ovl{F}$ be the algebraic closure of $F$. It is well known that
the set of all unipotent adjoint orbits of $G_n(\ovl{F})$ is parameterized by
the set of partitions of $2n$ whose odd parts occur with even multiplicity (see \cite{CM93}, \cite{N11} and \cite{W01}, for instance).
We may call them symplectic partitions of $2n$.

If we consider $G_n=\Sp_{2n}$ over $F$, the symplectic partitions of $2n$ parameterize the $F$-stable unipotent orbits of $G_n(F)$.
In order to define the Fourier coefficients of automorphic forms of $G_n(\BA)$ attached to an $F$-unipotent orbit,
where $\BA$ is the ring of adeles of $F$, we introduce certain explicit elements in the $F$-unipotent orbit.
We do this following \cite{CM93}, and see also
\cite{N11} and \cite{W01}.

Given a symplectic partition $\underline{p}=[p_1, \ldots, p_t]$,
the Recipe 5.2.2 for Type $C_n$ (\cite[Page 77]{CM93})
gives a way to attach to $\underline{p}$ a standard $\frak{sl}_2$-triple. It is easy to see that this method also works over $F$ by using
\cite{N11} and \cite{W01}. One may break the series $\{p_1, \ldots, p_t\}$ into chunks of the
following two types: $\{2k+1, 2k+1\}$ or $\{2k\}$, and rewrite the partition as $\underline{p}=[p_1^{e_1} \cdots p_r^{e_r}]$,
with $p_1 \geq p_2 \geq \cdots \geq p_r$, where $e_i = 1$ if $p_i$ is even,
$e_i =2$, if $p_i$ is odd. From now on, general symplectic partitions will always be written
in this form.

We assign a block of consecutive indices
for each $p_i$ inductively. For $p_1$, choose the block $\{1, \ldots, q_1\}$,
where $q_1 = \frac{p_1}{2}$ if $e_1=1$, and $q_1 = p_1$
if $e_1=2$. Assume that for $p_{i-1}$, $i \geq 2$, the block of consecutive indices
has been chosen to be $\{\sum_{j=1}^{i-2} q_j +1, \ldots, \sum_{j=1}^{i-1} q_j\}$. Then
the block for $p_i$ is $\{\sum_{j=1}^{i-1} q_j +1, \ldots, \sum_{j=1}^{i} q_j\}$,
where $q_i = \frac{p_i}{2}$ if $e_i=1$, and $q_i = p_i$ if $e_i=2$.

Next, to each $p_i$, we assign a set of negative roots $X_i$ as follows:
for $1 \leq i \leq r$, define
$$
X_i = \{\alpha^i_1 = e_{N+2}-e_{N+1},
\ldots, \alpha^i_{q_i-1} = e_{N+q_i}-e_{N+q_i-1}, \alpha^i_{q_i} = -2 e_{N+q_i}\}
$$
if $e_i = 1$, and define
$$
X_i = \{\alpha^i_1 = e_{N+2}-e_{N+1},
\ldots, \alpha^i_{q_i-1} = e_{N+q_i}-e_{N+q_i-1}\}
$$
if $e_i = 2$,
where $N = \sum_{j=1}^{i-1} q_j$. Then we define
$$
X_{\underline{p}, \underline{a}} = \sum_{i=1}^r \sum_{j=1}^{q_i-1} x_{\alpha^i_j}(\frac{1}{2}) + \sum_{e_i = 1} x_{\alpha^i_{q_i}} (a_i),
$$
where $a_i \in F^*/(F^*)^2$ if $e_i=1$, which defines
$\underline{a} = \{a_i : e_i = 1\}$, and for each root $\alpha$, $x_{\alpha}(x)$ is defined to be the a root vector in the
corresponding root subspace.
Then, we can find $Y_{\underline{p}, \underline{a}}$ a sum of positive root vectors and a semisimple element $H_{\underline{p}, \underline{a}}$,
such that $\{H_{\underline{p}, \underline{a}}, Y_{\underline{p}, \underline{a}}, X_{\underline{p}, \underline{a}}\}$ is a
standard $\mathfrak{sl}_2$-triple. Since we will only use $X_{\underline{p}, \underline{a}}$ to
define the Fourier coefficients attached to the partition $\underline{p}=[p_1^{e_1} \cdots p_r^{e_r}]$,
we will not make $Y_{\underline{p}, \underline{a}}$ and $H_{\underline{p}, \underline{a}}$ more explicit here.

To this $\mathfrak{sl}_2$-triple $\{H_{\underline{p}, \underline{a}}, Y_{\underline{p}, \underline{a}}, X_{\underline{p}, \underline{a}}\}$,
we define a one-dimensional toric subgroup $\mathcal{H}_{\underline{p}}$
of $G_n(F)$: for $t \in F^*$,
\begin{equation}\label{equtorus}
\mathcal{H}_{\underline{p}}(t):= \diag(T_1(t), \ldots T_r(t), T_r^*(t), \ldots, T_1^*(t)),
\end{equation}
where
$$
T_i(t) =
\begin{cases}
 \diag(t^{q_i-1}, \ldots, t)& \text{if}\  e_i =1;\\
\diag(t^{q_i-1}, \ldots, t^{1-q_i})& \text{if}\ e_i =2.
\end{cases}
$$
It is easy to see that under the adjoint action,
$$
\Ad(\mathcal{H}_{\underline{p}}(t))(X_{\underline{p}, \underline{a}}) = t^{-2} X_{\underline{p}, \underline{a}}, \forall t \in F^*.
$$

Let $\mathfrak{g}$ be the Lie algebra of $G_n$. Under the
adjoint action of $\mathcal{H}_{\underline{p}}$,
$\mathfrak{g}$ has the following direct sum decomposition into
$\mathcal{H}_{\underline{p}}$-eigenspaces:
\begin{equation}\label{ssd}
\mathfrak{g} = \mathfrak{g}_{-m} \oplus \cdots \oplus \mathfrak{g}_{-2} \oplus \mathfrak{g}_{-1} \oplus \mathfrak{g}_{0} \oplus \mathfrak{g}_{1} \oplus \mathfrak{g}_{2} \oplus \cdots \oplus \mathfrak{g}_{m},
\end{equation}
for some positive integer $m$, where
$\mathfrak{g}_{l}:= \{X \in \mathfrak{g}|\Ad(\mathcal{H}_{\underline{p}}(t))(X) = t^{l}X\}$.

Let $V_{\underline{p}, j}$ ($j=1, \ldots, m$) be the unipotent
subgroup of $G_n(F)$ with Lie algebra $\oplus_{l=j}^m \mathfrak{g}_l$.
Let $L_{\underline{p}}$ be the algebraic subgroup of $G_n(F)$
with Lie algebra $\mathfrak{g}_{0}$. Under the adjoint action,
the set $\Ad(L_{\underline{p}}(\overline{F}))(X_{\underline{p}, \underline{a}})$
is Zariski open dense in dual space of $V_{\underline{p}, 2}(\overline{F})/V_{\underline{p}, 3}(\overline{F})$.
Note that when passing to $F$ from $\overline{F}$, the $F$-points of this Zariski open dense orbit
decomposes as a union of $F$-rational orbits, which form the corresponding $F$-stable orbit defined over $F$.
By Proposition 5 of \cite{N11} (see also \cite{W01}), when
the $a_i$'s in $\ul{a}$ run through all the square classes in $F^*/(F^*)^2$,
$\Ad(L_{\underline{p}}(F))(X_{\underline{p}, \underline{a}})$
gives all the $F$-rational orbits in the corresponding $F$-stable orbit defined over $F$.

Each $F$-rational $\Ad(L_{\underline{p}}(F))(X_{\underline{p}, \underline{a}})$ defines a character of $V_{\underline{p}, 2}$
as follows: for $v \in V_{\underline{p}, 2}(\BA)$ and for a nontrivial character $\psi$ of $F\bks\BA$,
\begin{align}\label{ch}
\begin{split}
\psi_{\underline{p}, \underline{a}}(v) := & \psi(\tr(X_{\underline{p}, \underline{a}}\log(v)))\\
= \ & \prod_{i=1}^r \psi(v_{\sum_{j=1}^{i-1} q_j +1, \sum_{j=1}^{i-1} q_j +2} +
\cdots + v_{\sum_{j=1}^{i} q_j -1, \sum_{j=1}^{i} q_j})\\
& \cdot \psi(\sum_{e_i =1} a_i v_{\sum_{j=1}^{i} q_j, \sum_{j=1}^{i} q_j}).
\end{split}
\end{align}

From now on, we may take $G_n(\BA)=\Sp_{2n}(\BA)$ or $\tilsp_{2n}(\BA)$, unless specified otherwise.
Since $G_n(\BA) = \tilsp_{2n}(\BA)$
splits over any unipotent subgroup, the above discussion holds for
$G_n(\BA) = \tilsp_{2n}(\BA)$.

For an arbitrary automorphic form $\varphi$ on $G_n(\BA)$, the $\psi_{\underline{p}, \underline{a}}$-Fourier coefficient
of $\varphi$ is defined by
\begin{equation}\label{fc}
\varphi^{\psi_{\underline{p}, \underline{a}}}(g):=\int_{[V_{\underline{p}, 2}]}
\varphi(vg) \psi^{-1}_{\underline{p}, \underline{a}}(v) dv,
\end{equation}
where for a group $G$, we denote the quotient $G(F) \bs G(\BA)$ simply by $[G]$.
Since $\varphi$ is automorphic, the non-vanishing
of the $\psi_{\underline{p}, \underline{a}}$-Fourier coefficient
of $\varphi$ depends only on the $F$-rational orbit $\Ad(L_{\underline{p}}(F))(X_{\underline{p}, \underline{a}})$.
When an irreducible automorphic representation $\pi$ of $G_n(\BA)$ is generated by automorphic forms $\varphi$,
we say that $\pi$ has a nonzero $\psi_{\underline{p}, \underline{a}}$-Fourier coefficient or
a nonzero Fourier coefficient attached to $\udl{p}$ if there exists an
automorphic form $\varphi$ in the space of $\pi$ with a nonzero $\psi_{\underline{p}, \underline{a}}$-Fourier coefficient
$\varphi^{\psi_{\underline{p}, \underline{a}}}(g)$, for some choice of $\ul{a}$.

For any irreducible automorphic representation $\pi$
of $G_n(\BA)$, as in \cite{J14}, we define $\mathfrak{n}^m(\pi)$ to be the set of
all symplectic partitions $\underline{p}$ which have the properties
that $\pi$ has a nonzero $\psi_{\underline{p}, \underline{a}}$-Fourier coefficient
for some choice of $\underline{a}$, and for any ${\underline{p}'} > \underline{p}$ (with
the natural ordering of partitions), $\pi$ has no nonzero Fourier coefficients
attached to ${\underline{p}'}$.

%When $\mathfrak{g}_1$ is not zero, it is always important to understand the space $\mathfrak{g}_1$ in order to study the
%$\psi_{\underline{p}, \underline{a}}$-Fourier coefficient $\varphi^{\psi_{\underline{p}, \underline{a}}}(g)$.
%In the following, we assume that $\mathfrak{g}_1$ is not zero.

Recall that $V_{\udl{p},1}(F)$ is the unipotent subgroup
of $G_n(F)$ whose Lie algebra is $\mathfrak{g}_1\oplus\mathfrak{g}_2\oplus\cdots\oplus\mathfrak{g}_m$.
Following \cite{MW87}, we define
$$
(X_{\underline{p}, \underline{a}})^{\sharp}= \{X \in \mathfrak{g}\ |\
\tr(X_{\underline{p}, \underline{a}}[X, X']) =0, \forall X' \in \mathfrak{g} \}.
$$
Define $V_{\udl{p},2}' = \exp(\mathfrak{g}_1 \cap (X_{\underline{p}, \underline{a}})^{\sharp}) V_{\udl{p},2}$,
which is a normal subgroup of $V_{\udl{p},1}(F)$.
From the definition of $(X_{\underline{p}, \underline{a}})^{\sharp}$, it is easy to see that
the character $\psi_{\underline{p}, \underline{a}}$ on $V_{\udl{p},2}$ can be trivially
extended to $V_{\udl{p},2}'$, which we still denote by $\psi_{\underline{p}, \underline{a}}$.
It turns out that $V_{\udl{p},1} / \ker_{V_{\udl{p},2}'}(\psi_{\underline{p}, \underline{a}})$ has a Heisenberg
structure $W \oplus Z$ (see \cite{MW87}, Section \Rmnum{1}.7), where
$W \cong V_{\udl{p},1} / V_{\udl{p},2}'$, and
$Z \cong V_{\udl{p},2}'/\ker_{V_{\udl{p},2}'}(\psi_{\underline{p}, \underline{a}})$.
Note that the symplectic form on $W$ is the one
inherited from the Lie algebra bracket, i.e.,
for $w_1, w_2 \in W$ (here, we identify $w \in W$ with any of it's representatives in $V_{\udl{p},1}$
such that $\log(w) \in \mathfrak{g}_1$),
$
\langle w_1, w_2 \rangle  = \tr(X_{\underline{p}, \underline{a}}\log([w_1, w_2]))
= \tr(X_{\underline{p}, \underline{a}}[\log(w_1), \log(w_2)]).
$
The non-degeneracy of this symplectic form can be checked easily
as following: for fixed $w_1 \in W$, if $\langle w_1, w_2 \rangle \equiv 0$ for any $w_2 \in W$, that is, $$\tr(X_{\underline{p}, \underline{a}}[\log(w_1), \log(w_2)]) \equiv 0$$
 for any $w_2 \in W$, then,
$
\tr(X_{\underline{p}, \underline{a}}[\log(w_1), X']) \equiv 0,
$
for any $X' \in \mathfrak{g}_1$. Therefore,
$\tr(X_{\underline{p}, \underline{a}}[\log(w_1), X']) \equiv 0$,
for any $X' \in \mathfrak{g}$. Hence, $\log(w_1)\in (X_{\underline{p}, \underline{a}})^{\sharp}$,
that is, $w_1 = 0 \in V_{\udl{p},1} / V_{\udl{p},2}'$.

As in the case of $\GL_n$ (see Lemma 5.1, \cite{JL13}), we can prove that
actually $V_{\udl{p},2}' = V_{\udl{p},2}$, i.e., $\mathfrak{g}_1 \cap (X_{\underline{p}, \underline{a}})^{\sharp} = \{0\}$. %This is expected
%always true if the nilpotent orbit is attached to the $\mathfrak{sl}_2$-triple.

\begin{lem}\label{key1}
The two subgroups $V_{\udl{p},2}'$ and $V_{\udl{p},2}$ are equal. In particular,
the quotient group $V_{\udl{p},1} / \ker_{V_{\udl{p},2}}(\psi_{\underline{p}, \underline{a}})$ has a Heisenberg
structure $W \oplus Z$, where
$W \cong V_{\udl{p},1} / V_{\udl{p},2}$ is symplectic, and
$Z \cong V_{\udl{p},2}/\ker_{V_{\udl{p},2}}(\psi_{\underline{p}, \underline{a}})$ is the center.
\end{lem}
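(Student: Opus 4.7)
The plan is to reduce the lemma to showing $\mathfrak{g}_1 \cap (X_{\underline{p}, \underline{a}})^{\sharp} = \{0\}$ and then to establish this vanishing by combining two ingredients: the compatibility of the matrix trace form on $\mathfrak{g} \subset \mathfrak{gl}_{2n}$ with the Dynkin grading induced by $\mathcal{H}_{\underline{p}}$, and the injectivity of the lowering operator on positive weight spaces of any $\mathfrak{sl}_2$-module. Once this is done, $V_{\udl{p},2}' = V_{\udl{p},2}$ follows directly from the definition of $V_{\udl{p},2}'$ as $\exp(\mathfrak{g}_1 \cap (X_{\underline{p}, \underline{a}})^{\sharp}) V_{\udl{p},2}$, and the Heisenberg description in the statement is then exactly what was already shown in the discussion preceding the lemma.

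Let $X \in \mathfrak{g}_1 \cap (X_{\underline{p}, \underline{a}})^{\sharp}$. Since $\mathrm{Ad}(\mathcal{H}_{\underline{p}}(t))$ preserves the trace form (being conjugation) while scaling each $A \in \mathfrak{g}_l$ by $t^l$, one finds $\tr(AB) = t^{l+m}\tr(AB)$ for $A \in \mathfrak{g}_l, B \in \mathfrak{g}_m$ and all $t \in F^*$, so $\mathfrak{g}_l \perp \mathfrak{g}_m$ under the trace form whenever $l + m \neq 0$. For $X' \in \mathfrak{g}_k$ the element $[X, X']$ lies in $\mathfrak{g}_{1+k}$, hence $\tr(X_{\underline{p}, \underline{a}}[X, X']) = 0$ automatically except when $k = 1$, and the defining condition of $(X_{\underline{p}, \underline{a}})^{\sharp}$ collapses to
$$
\tr(X_{\underline{p}, \underline{a}}[X, X']) = 0 \qquad \text{for all } X' \in \mathfrak{g}_1.
$$
Via the invariance identity $\tr(A[B,C]) = \tr([A,B]C)$ this becomes $\tr([X_{\underline{p}, \underline{a}}, X]\, X') = 0$ for every $X' \in \mathfrak{g}_1$. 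Now $[X_{\underline{p}, \underline{a}}, X]$ lies in $\mathfrak{g}_{-1}$, and non-degeneracy of the trace form on $\mathfrak{g}$ together with the orthogonality just established forces the restricted pairing $\mathfrak{g}_{-1} \times \mathfrak{g}_1 \to F$ to be perfect; hence $[X_{\underline{p}, \underline{a}}, X] = 0$.

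To conclude, view $\mathfrak{g}$ as an $\mathfrak{sl}_2$-module for the triple $\{H_{\underline{p}, \underline{a}}, Y_{\underline{p}, \underline{a}}, X_{\underline{p}, \underline{a}}\}$ under the adjoint action, so that $\mathfrak{g}_k$ is the weight-$k$ space of $\mathrm{ad}(H_{\underline{p}, \underline{a}})$ and $\mathrm{ad}(X_{\underline{p}, \underline{a}})$ acts as the lowering operator. In any finite-dimensional $\mathfrak{sl}_2$-representation the lowering operator is injective on every weight space other than the lowest weight space of each irreducible summand; since weight $1$ is never a lowest weight, the restriction of $\mathrm{ad}(X_{\underline{p}, \underline{a}})$ to $\mathfrak{g}_1$ is injective. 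Applied to our $X$ this yields $X = 0$, completing the proof.

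The argument is essentially formal once the Dynkin grading and its compatibility with the trace form are in place, so the main obstacle is really just the bookkeeping of weights rather than any deeper input; no partition-by-partition analysis is needed, and the proof transfers verbatim to $G_n = \tilsp_{2n}$ since everything takes place at the level of the Lie algebra.
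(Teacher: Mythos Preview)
Your proof is correct and takes a genuinely different route from the paper's. The paper argues by explicit case analysis: it writes down the root subgroups constituting $\mathfrak{g}_1$, splitting into cases according to the parities of pairs of parts $(p_i,p_j)$ in $\underline{p}$, and for each such root subgroup exhibits a partner root subgroup in $\mathfrak{g}_1$ against which the pairing $\tr(X_{\underline{p},\underline{a}}[\cdot,\cdot])$ is nonzero. You bypass all of this by observing that the grading-orthogonality of the trace form collapses the $(X_{\underline{p},\underline{a}})^\sharp$ condition on $X\in\mathfrak{g}_1$ to $[X_{\underline{p},\underline{a}},X]=0$, and then invoking the injectivity of the lowering operator on strictly positive weight spaces in finite-dimensional $\mathfrak{sl}_2$-modules. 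Both steps are sound over a number field: the trace form on $\mathfrak{sp}_{2n}$ is a nonzero multiple of the Killing form, hence nondegenerate, and the $\mathfrak{sl}_2$ injectivity statement holds over any field of characteristic~$0$.

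What each approach buys: yours is shorter, conceptual, and uniform in the partition. The paper's explicit computation, however, produces as a byproduct a concrete polarization $W = X \oplus Y$ into products of root subgroups (their Corollary~\ref{polar}), and this explicit description is used throughout the rest of the paper --- in particular in Corollary~\ref{halfheisen} and repeatedly in Sections~3--5, where one needs to name the individual root subgroups in $X$ and $Y$ in order to apply Lemma~\ref{nvequ}. So while your argument settles the lemma more elegantly, it does not by itself supply the explicit data the paper relies on downstream.
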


\begin{proof}
If $\mathfrak{g}_1$ is zero, there is nothing to prove. We assume that $\mathfrak{g}_1 \neq \{0\}$. To prove $V_{\udl{p},2}' = V_{\udl{p},2}$,
it suffices to prove that $\mathfrak{g}_1 \cap (X_{\underline{p}, \underline{a}})^{\sharp} = \{0\}$.

First, we are going to describe structure of elements in $V_{\udl{p},2}$.
By definition, elements $v$ in $V_{\udl{p},2}$ can be written as a product of
\begin{align*}
\begin{pmatrix}
\ddots & & & & & & & & \\
& n_i &  & q^{i,j}_1 &  & q^{i,j}_2 & & q^{i,i} & \\
&  & \ddots & & & & & & \\
& 0 & & n_j & & q^{j,j} & & (q^{i,j}_2)^* & \\
& & & &\ddots & & & & \\
& 0 & & 0 & & (n_j)^* & & (q^{i,j}_1)^* & \\
& & & & & & \ddots & & \\
& 0 & & 0 & & 0 & & (n_i)^* & \\
& & & & & & & & \ddots
\end{pmatrix}
\end{align*}
and
\begin{align*}
\begin{pmatrix}
\ddots & & & & & & & & \\
& I_{q_i} &  & 0 &  & 0 & & 0 & \\
&  & \ddots & & & & & & \\
& p^{i,j}_1 & & I_{q_j} & & 0 & & 0 & \\
& & & &\ddots & & & & \\
& p^{i,j}_2 & & p^{j,j} & & I_{q_j} & & 0 & \\
& & & & & & \ddots & & \\
& p^{i,i} & & (p^{i,j}_2)^* & & (p^{i,j}_1)^* & & I_{q_i} & \\
& & & & & & & & \ddots
\end{pmatrix},
\end{align*}
where $n_i \in N_{q_i}$ ($n_j \in N_{q_j}$), the maximal upper-triangular unipotent subgroup
of $\GL_{q_i}$ ($\GL_{q_j}$, respectively),
$q^{i,j}_1, q^{i,j}_2 \in M_{q_i \times q_j}$, $q^{i,i} \in M_{q_i \times q_i}$,
$(q^{i,j}_1)^*, (q^{i,j}_2)^* \in M_{q_j \times q_i}$, $q^{j,j} \in M_{q_j \times q_j}$,
$p^{i,j}_1, p^{i,j}_2 \in M_{q_j \times q_i}$, $p^{i,i} \in M_{q_i \times q_i}$,
$(p^{i,j}_1)^*, (p^{i,j}_2)^* \in M_{q_i \times q_j}$, and $p^{j,j} \in M_{q_j \times q_j}$,
satisfying certain conditions. By definition, $\mathfrak{g}_1\neq 0$ if there are parts $p_i$ and $p_j$ having different parity.
The above ``certain conditions" are needed for the case that the parts $p_i$
and $p_j$ are of different parity. We only need to describe
them for $q^{i,j}_1, q^{i,j}_2$ and $p^{i,j}_1, p^{i,j}_2$. There are two sub-cases to be considered:
(1) $p_i$ is even and $p_j$ is odd and (2) $p_i$ is odd and $p_j$ is even, assuming that $i<j$.

For Case (1) where $p_i$ is even and $p_j$ is odd, we have that $q_i = \frac{p_i}{2}$ and $q_j = p_j$. Then
$q^{i,j}_k \in M_{q_i \times q_j}$ with $k=1,2$ and $q^{i,j}_k(l,m)=0$ for $l \geq m + \frac{p_i-p_j-1}{2}$;
and $p^{i,j}_k \in M_{q_j \times q_i}$ with $k=1,2$ and $p^{i,j}_k(l,m)=0$
for $m \leq l + \frac{p_i-p_j-1}{2}+1$.

For Case (2) where $p_i$ is odd and $p_j$ is even, we have that $q_i = p_i$ and $q_j = \frac{p_j}{2}$. Then
$q^{i,j}_1 \in M_{q_i \times q_j}$ with $q^{i,j}_1(l,m)=0$
for $l \geq m + \frac{p_i-p_j-1}{2}$; and $q^{i,j}_2 \in M_{q_i \times q_j}$ with $q^{i,j}_2(l,m)=0$
for $l \geq m + \frac{p_j}{2} + \frac{p_i-p_j-1}{2}$. Also $p^{i,j}_1 \in M_{q_j \times q_i}$ with $p^{i,j}_1(l,m)=0$
for $m \leq l + \frac{p_i-p_j-1}{2}+1$; and
$p^{i,j}_2 \in M_{q_j \times q_i}$ with $p^{i,j}_2(l,m)=0$
for $m \leq l + \frac{p_j}{2} + \frac{p_i-p_j-1}{2}+1$.

For Case (1), we define abelian groups $Y^{i,j}_1$
and $X_1^{i,j}$ as follows. First we define
$$
Y^{i,j}_1 = \prod_{l=1}^{\frac{p_j+1}{2}} X_{\alpha^{i,j}_l}(y^{i,j}_l)
\prod_{m=1}^{\frac{p_j-1}{2}} X_{\beta^{i,j}_m}(y^{i,j}_m),
$$
where $\alpha^{i,j}_l = e_{\sum_{k=1}^{i-1}q_k + \frac{p_i-p_j-1}{2} +l}
+ e_{n - \sum_{k=j+1}^{r}q_k -l+1}$ for $1 \leq l \leq \frac{p_j+1}{2}$, and
$\beta^{i,j}_m = e_{\sum_{k=1}^{i-1}q_k + \frac{p_i-p_j-1}{2} +m}
- e_{\sum_{k=1}^{j-1}q_k +m}$
for $1 \leq m \leq \frac{p_j-1}{2}$. Here to each given root $\alpha$, $X_{\alpha}(x)$ is the corresponding one-dimensional root subgroup.
Then we define
$$
X^{i,j}_1 = \prod_{l=1}^{\frac{p_j+1}{2}} X_{\gamma^{i,j}_l}(x^{i,j}_l)
\prod_{m=1}^{\frac{p_j-1}{2}} X_{\delta^{i,j}_m}(x^{i,j}_m),
$$
where $\gamma^{i,j}_l = e_{\sum_{k=1}^{j-1}q_k +l}
- e_{\sum_{k=1}^{i-1}q_k +\frac{p_i-p_j-1}{2}+l+1}$ for $1 \leq l \leq \frac{p_j-1}{2}$;
$\gamma^{i,j}_{\frac{p_j+1}{2}} = e_{\sum_{k=1}^{i-1}q_k + \frac{p_i-p_j-1}{2} +\frac{p_j+1}{2}}
- e_{\sum_{k=1}^{j-1}q_k +\frac{p_j+1}{2}}$;
and $\delta^{i,j}_m = - e_{n - \sum_{k=j+1}^{r}q_k -m+1}
- e_{\sum_{k=1}^{i-1}q_k +\frac{p_i-p_j-1}{2}+m+1}$ for $1 \leq m \leq \frac{p_j-1}{2}$.

For Case (2),  we define abelian groups $Y^{i,j}_2$
and $X_2^{i,j}$ as follows. First, we define
$$
Y^{i,j}_2 = \prod_{l=1}^{\frac{p_j}{2}} X_{\alpha^{i,j}_l}(y^{i,j}_l)
\prod_{m=1}^{\frac{p_j}{2}} X_{\beta^{i,j}_m}(y^{i,j}_m),
$$
where $\alpha^{i,j}_l = e_{\sum_{k=1}^{i-1}q_k + \frac{p_i-p_j-1}{2} +l}
- e_{\sum_{k=1}^{j-1}q_k +l}$ for $1 \leq l \leq \frac{p_j}{2}$, and
$\beta^{i,j}_m = e_{\sum_{k=1}^{i-1}q_k + \frac{p_i-p_j-1}{2} +\frac{p_j}{2}+m}
+ e_{n-\sum_{k=j+1}^{r}q_k-m+1}$ for $1 \leq m \leq \frac{p_j}{2}$. Then we define
$$
X^{i,j}_2 = \prod_{l=1}^{\frac{p_j}{2}} X_{\gamma^{i,j}_l}(y^{i,j}_l)
\prod_{m=1}^{\frac{p_j}{2}} X_{\delta^{i,j}_m}(y^{i,j}_m),
$$
where $\gamma^{i,j}_l = e_{\sum_{k=1}^{j-1}q_k + l}
- e_{\sum_{k=1}^{i-1}q_k +\frac{p_i-p_j-1}{2} +l+1}$ for $1 \leq l \leq \frac{p_j}{2}$, and
$\delta^{i,j}_m = -e_{n -\sum_{k=j+1}^{r}q_k -m+1}
- e_{\sum_{k=1}^{i-1}q_k+ \frac{p_i-p_j-1}{2} +\frac{p_j}{2}+m+1}$ for $1 \leq m \leq \frac{p_j}{2}$.

Define $Y^{i,j} = Y^{i,j}_k$ and $X^{i,j} = X^{i,j}_k$ for $k=1,2$, depending on the case of the pair $(p_i, p_j)$.
Then define
\begin{align}\label{equy}
Y = & \prod_{1 \leq i < j \leq r \text{, } p_i \text{ and } p_j \text{ are of different parity }} Y^{i,j},
\end{align}
and
\begin{align}\label{equx}
X = & \prod_{1 \leq i < j \leq r \text{, } p_i \text{ and } p_j \text{ are of different parity }} X^{i,j}.
\end{align}
It follows that $\mathfrak{g}_1 = \log(X) \oplus \log(Y)$. To show
that $\mathfrak{g}_1 \cap (X_{\underline{p}, \underline{a}})^{\sharp} = \{0\}$, it is enough to show that
$$
(\log(X) \oplus \log(Y)) \cap (X_{\underline{p}, \underline{a}})^{\sharp} = \{0\}.
$$
In other words, it suffices to show that
for $1 \leq i < j \leq r$ such that $p_i$ and $p_j$ are of different parity,
$$
(\log(X^{i,j}) \oplus \log(Y^{i,j})) \cap (X_{\underline{p}, \underline{a}})^{\sharp} = \{0\}.
$$

If $(p_i, p_j)$ is in Case (1), i.e., $p_i$ is even, $p_j$ is odd,
then by direct calculation, we obtain that when $1 \leq l \leq \frac{p_j-1}{2}$,
\begin{align*}
\tr(X_{\underline{p}, \underline{a}}[\log(X_{\delta^{i,j}_l}(x^{i,j}_l)), \log(X_{\alpha^{i,j}_l}(y^{i,j}_l))])
& = -x^{i,j}_l y^{i,j}_l;\\
\tr(X_{\underline{p}, \underline{a}}[\log(X_{\gamma^{i,j}_l}(x^{i,j}_l)), \log(X_{\beta^{i,j}_l}(y^{i,j}_l))])
& = -x^{i,j}_l y^{i,j}_l.
\end{align*}
and when $l = \frac{p_j+1}{2}$,
$$
\tr(X_{\underline{p}, \underline{a}}[\log(X_{\alpha^{i,j}_l}(y^{i,j}_l)), \log(X_{\gamma^{i,j}_l}(x^{i,j}_l))])
= -a_i x^{i,j}_l y^{i,j}_l.
$$
This implies that
$(\log(X^{i,j}_1) \oplus \log(Y^{i,j}_1)) \cap (X_{\underline{p}, \underline{a}})^{\sharp} = \{0\}$.

If $(p_i, p_j)$ is in Case (2), i.e., $p_i$ is odd, $p_j$ is even,
then by direct calculation, we get that for $1 \leq l \leq \frac{p_j}{2}$,
\begin{align*}
\tr(X_{\underline{p}, \underline{a}}[\log(X_{\gamma^{i,j}_l}(x^{i,j}_l)), \log(X_{\alpha^{i,j}_l}(y^{i,j}_l))])
& = -x^{i,j}_l y^{i,j}_l;\\
\tr(X_{\underline{p}, \underline{a}}[\log(X_{\delta^{i,j}_l}(x^{i,j}_l)), \log(X_{\beta^{i,j}_l}(y^{i,j}_l))])
& = -x^{i,j}_l y^{i,j}_l.
\end{align*}
Therefore, we also have that
$(\log(X^{i,j}_2) \oplus \log(Y^{i,j}_2)) \cap (X_{\underline{p}, \underline{a}})^{\sharp} = \{0\}$.
This completes the proof of this lemma.
\end{proof}

From the proof of Lemma \ref{key1}, we have

\begin{cor}\label{polar}
The subspaces $X$ and $Y$ as defined in \eqref{equx}, \eqref{equy} give a polarization $X \oplus Y$ of $W$.
\end{cor}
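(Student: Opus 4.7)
The plan is to deduce the corollary directly from the calculations already carried out in the proof of Lemma \ref{key1}, supplemented by a parallel check that the restricted symplectic pairing vanishes within $X$ and within $Y$ separately. Recall that the proof of the lemma established $\mathfrak{g}_1 = \log(X) \oplus \log(Y)$, which via the exponential map and the identification $W \cong V_{\udl{p},1}/V_{\udl{p},2}$ yields the vector-space direct sum $W = X \oplus Y$. Thus the remaining content of the corollary is to verify that $X$ and $Y$ are each totally isotropic with respect to the symplectic form
\[
\langle w_1, w_2 \rangle = \tr\bigl(X_{\underline{p}, \underline{a}}[\log(w_1), \log(w_2)]\bigr).
\]

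First I would fix a pair $(p_i,p_j)$ with $i<j$ of different parity, since by the block-diagonal structure of $X_{\underline{p},\underline{a}}$ and the fact that roots from different $(i,j)$-pairs have sums not matching any root occurring in $X_{\underline{p},\underline{a}}$, cross-terms between $X^{i,j}$ and $X^{i',j'}$ (or $Y^{i,j}$ and $Y^{i',j'}$) for distinct pairs contribute trivially. This reduces the problem to checking isotropy for each single pair. For a pair in Case (1) or Case (2), I would then pair up the roots $\gamma^{i,j}_l, \delta^{i,j}_m$ defining $X^{i,j}$ (resp.\ $\alpha^{i,j}_l, \beta^{i,j}_m$ defining $Y^{i,j}$) and inspect the sums $\gamma^{i,j}_l+\gamma^{i,j}_{l'}$, $\gamma^{i,j}_l+\delta^{i,j}_m$ and $\delta^{i,j}_m+\delta^{i,j}_{m'}$ (resp.\ analogous sums for $Y^{i,j}$). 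The calculation in the lemma shows that the only pairings producing a nonzero trace with $X_{\underline{p},\underline{a}}$ are the cross pairings between an $X^{i,j}$-root and a $Y^{i,j}$-root (giving the $-x^{i,j}_l y^{i,j}_l$ or $-a_i x^{i,j}_l y^{i,j}_l$ terms). Thus the same root-sum bookkeeping shows that sums of two roots both from $X^{i,j}$, or both from $Y^{i,j}$, never land on one of the negative roots $\alpha^i_j = e_{N+j+1}-e_{N+j}$ or $\alpha^i_{q_i} = -2e_{N+q_i}$ appearing in $X_{\underline{p},\underline{a}}$.

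The main obstacle is simply the case analysis: one must verify root by root in each of Cases (1) and (2) that the relevant root sums are not among the roots supporting $X_{\underline{p},\underline{a}}$. This is a finite (if slightly tedious) combinatorial check, parallel in structure to the pairing computations already performed. Once this is done, $X$ and $Y$ are each isotropic with $W = X \oplus Y$; combined with the non-degeneracy of $\langle \cdot,\cdot\rangle$ proved in Lemma \ref{key1}, a dimension count forces $\dim X = \dim Y = \tfrac{1}{2}\dim W$, so $X$ and $Y$ are complementary Lagrangian subspaces, which is precisely the assertion that $X \oplus Y$ is a polarization of $W$.
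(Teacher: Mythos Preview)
Your proposal is correct and follows the same approach as the paper, which states only ``From the proof of Lemma \ref{key1}, we have'' and leaves the corollary as an immediate consequence. You are simply making explicit the isotropy check that the paper treats as implicit in the root computations: the lemma only records the nonzero cross-pairings between $X^{i,j}$ and $Y^{i,j}$, so your observation that the remaining step is to verify that sums of two $X$-roots (or two $Y$-roots) never hit a root supporting $X_{\underline{p},\underline{a}}$ is exactly the missing bookkeeping, and your dimension argument at the end is the standard way to upgrade ``isotropic'' to ``Lagrangian''.
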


\begin{exmp}
\begin{enumerate}
\item Let $n=3$, then Fourier coefficients of automorphic forms $\varphi$ of $\Sp_6(\BA)$ attached to the partition $[41^2]$ with respect to a square class $\alpha$ are defined as follows.
$\CH_{[41^2]}(t)=\diag(t^3, t, 1, 1, t^{-1}, t^{-3})$, $\mathfrak{g} =  \mathfrak{g}_{-6} \oplus \mathfrak{g}_{-4} \oplus \mathfrak{g}_{-2} \oplus \mathfrak{g}_{-1} \oplus \mathfrak{g}_{0} \oplus \mathfrak{g}_{1} \oplus \mathfrak{g}_{2} \oplus \mathfrak{g}_{4} \oplus \mathfrak{g}_{6},$
   and $V_{[41^2], 2}=\exp(\mathfrak{g}_{2} \oplus \mathfrak{g}_{4} \oplus \mathfrak{g}_{6}).$ Elements in $V_{[41^2], 2}$ have the following form:
    $$\begin{pmatrix}
	1 & * & * & * & * & *\\
	0 & 1 & 0 & 0 & * & *\\
	0 & 0 & 1 & 0 & 0 & *\\
	0 & 0 & 0 & 1 & 0 & *\\
	0 & 0 & 0 & 0 & 1 & *\\
	0 & 0 & 0 & 0 & 0 & 1
	\end{pmatrix}.$$
	For any $v \in V_{[41^2], 2}$, $\psi_{[41^2], \alpha}(v)=\psi(v_{1,2} + \alpha v_{2,5})$. Then the corresponding Fourier coefficient of $\varphi$ is
	\begin{equation*}
\varphi^{\psi_{[41^2], \alpha}}(g)=\int_{[V_{[41^2], 2}]}
\varphi(vg) \psi^{-1}_{[41^2], \alpha}(v) dv.
\end{equation*}
Define roots $\alpha_1 = e_2 +e_3$, $\alpha_2 = e_2 -e_3$. Let $X_{\alpha_i}$ be the one-dimensional root subgroup corresponding to $\alpha_i$, $i=1,2$. Then the unipotent subgroup of $Y$ defined in \eqref{equy} equals $X_{\alpha_1}$, the unipotent subgroup of $X$ defined in \eqref{equx} equals $X_{\alpha_2}$.
\item Let $n=5$, then Fourier coefficients of automorphic forms $\varphi$ of $\Sp_{10}(\BA)$ attached to the partition $[43^2]$ with respect to a square class $\alpha$ are defined as follows.
$$\CH_{[43^2]}(t)=\diag(t^3, t, t^2, 1, t^{-2}, t^2, 1, t^{-2}, t^{-1}, t^{-3}),$$
 $\mathfrak{g} =  \mathfrak{g}_{-6} \oplus \mathfrak{g}_{-5} \oplus \mathfrak{g}_{-4} \oplus \mathfrak{g}_{-3} \oplus \mathfrak{g}_{-2} \oplus \mathfrak{g}_{-1} \oplus \mathfrak{g}_{0} \oplus \mathfrak{g}_{1} \oplus \mathfrak{g}_{2} \oplus \mathfrak{g}_{3} \oplus \mathfrak{g}_{4} \oplus \mathfrak{g}_{5} \oplus \mathfrak{g}_{6},$
   and $V_{[43^2], 2}=\exp(\mathfrak{g}_{2} \oplus \mathfrak{g}_{3} \oplus \mathfrak{g}_{4} \oplus \mathfrak{g}_{5} \oplus \mathfrak{g}_{6}).$ Elements in $V_{[43^2], 2}$ have the following form:
    $$\begin{pmatrix}
	1 & * & 0 & * & * & 0 & * & * &*&*\\
	0 & 1 & 0 & 0 & * & 0& 0 & * &*&*\\
	0 & 0 & 1 & * & * & 0& * & * &*&*\\
	0 & 0 & 0 & 1 & * & 0& 0 & * &0&*\\
	0 & 0 & 0 & 0 & 1 & 0& 0 & 0 &0&0\\
	0 & 0 & 0 & * & * & 1& * & * &*&*\\
    0 & 0 & 0 & 0 & * & 0& 1 & * &0&*\\
	0 & 0 & 0 & 0 & 0 & 0& 0 & 1 &0&0\\
	0 & 0 & 0 & 0 & 0 & 0& 0 & 0 &1&*\\
	0 & 0 & 0 & 0 & 0 & 0& 0 & 0 &0&1
	\end{pmatrix}.$$
	For any $v \in V_{[43^2], 2}$, $\psi_{[43^2], \alpha}(v)=\psi(v_{1,2} + \alpha v_{2,9} + v_{3,4}+v_{4,5})$. Then the corresponding Fourier coefficient of $\varphi$ is
	\begin{equation*}
\varphi^{\psi_{[43^2], \alpha}}(g)=\int_{[V_{[43^2], 2}]}
\varphi(vg) \psi^{-1}_{[43^2], \alpha}(v) dv.
\end{equation*}
Define roots $\alpha_1 = e_1-e_3$, $\alpha_2=e_1+e_5$, $\alpha_3=e_2+e_4$, $\alpha_4 = e_3-e_2$, $\alpha_5=-e_5-e_2$, $\alpha_6=e_2-e_4$. Let $X_{\alpha_i}$ be the one-dimensional root subgroup corresponding to $\alpha_i$, $i=1,\cdots,6$. Then the unipotent subgroup of $Y$ defined in \eqref{equy} equals $\prod_{i=1}^3 X_{\alpha_i}$, the unipotent subgroup of $X$ defined in \eqref{equx} equals $\prod_{i=4}^6 X_{\alpha_i}$.
\end{enumerate}
\end{exmp}

In general explicit calculations of Fourier coefficients of automorphic forms when $\mathfrak{g}_1$ is nonzero,
there is a very useful lemma, which has been formulated in a general term as Corollary 7.1 of \cite{GRS11}
and as Lemma 5.2 of \cite{JL13}
in a slightly different way. We recall this lemma below and apply it to $G_n$.

Let $C$ be an $F$-subgroup of a maximal unipotent subgroup of $G_n$, and let $\psi_C$ be a non-trivial character of $[C] = C(F) \bs C(\BA)$.
$\wt{X}, \wt{Y}$ are two unipotent $F$-subgroups, satisfying the following conditions:
\begin{itemize}
\item[(1)] $\wt{X}$ and $\wt{Y}$ normalize $C$;
\item[(2)] $\wt{X} \cap C$ and $\wt{Y} \cap C$ are normal in $\wt{X}$ and $\wt{Y}$, respectively, $(\wt{X} \cap C) \bs \wt{X}$ and $(\wt{Y} \cap C) \bs \wt{Y}$ are abelian;
\item[(3)] $\wt{X}(\BA)$ and $\wt{Y}(\BA)$ preserve $\psi_C$;
\item[(4)] $\psi_C$ is trivial on $(\wt{X} \cap C)(\BA)$ and $(\wt{Y} \cap C)(\BA)$;
\item[(5)] $[\wt{X}, \wt{Y}] \subset C$;
\item[(6)]  there is a non-degenerate pairing $(\wt{X} \cap C)(\BA) \times (\wt{Y} \cap C)(\BA) \rightarrow \BC^*$, given by $(x,y) \mapsto \psi_C([x,y])$, which is
multiplicative in each coordinate, and identifies $(\wt{Y} \cap C)(F) \bs \wt{Y}(F)$ with the dual of
$
\wt{X}(F)(\wt{X} \cap C)(\BA) \bs \wt{X}(\BA),
$
and
$(\wt{X} \cap C)(F) \bs \wt{X}(F)$ with the dual of
$
\wt{Y}(F)(\wt{Y} \cap C)(\BA) \bs \wt{Y}(\BA).
$
\end{itemize}

\begin{rmk}
The basic example of the setting above is the case that $C$ is the center of the Heisenberg group and $\wt{X}, \wt{Y}$ correspond to the two isotropic spaces.
\end{rmk}

Let $B =C\wt{Y}$ and $D=C\wt{X}$, and extend $\psi_C$ trivially to characters of $[B]=B(F)\bs B(\BA)$ and $[D]=D(F)\bs D(\BA)$,
which will be denoted by $\psi_B$ and $\psi_D$ respectively.

\begin{lem}\label{nvequ}
Assume the quadruple $(C, \psi_C, \wt{X}, \wt{Y})$ satisfies the above conditions. Let $f$ be an automorphic form on $G_n(\BA)$. Then
$$\int_{[C]} f(cg) \psi_C^{-1}(c) dc \equiv 0, \forall g \in G_n(\BA),$$
if and only if
$$\int_{[D]} f(ug) \psi_D^{-1}(u) du \equiv 0, \forall g \in G_n(\BA),$$
if and only if
$$\int_{[B]} f(vg) \psi_B^{-1}(v) dv \equiv 0, \forall g \in G_n(\BA).$$
\end{lem}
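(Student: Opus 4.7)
The plan is to prove the three equivalences by the standard ``root exchange'' technique. Denote the three period integrals in the statement by $I_C(g)$, $I_D(g)$, and $I_B(g)$. Since conditions (1)--(6) are symmetric in $\wt{X}$ and $\wt{Y}$, it suffices to establish the equivalence $I_C \equiv 0 \iff I_B \equiv 0$; the same argument with $\wt{X}$ and $\wt{Y}$ interchanged will then give $I_C \equiv 0 \iff I_D \equiv 0$, and transitivity yields the lemma.

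First I will unfold $I_B$. By (1), (3), and (4), the character $\psi_B$ is well defined as the trivial extension of $\psi_C$ across $B/C$, and Fubini gives
\[
I_B(g) \;=\; \int_{A_Y} \Phi(y, g)\, dy, \qquad \Phi(y, g) := I_C(yg),
\]
where $A_Y := \wt{Y}(F)(\wt{Y}\cap C)(\BA)\bs \wt{Y}(\BA)$. Since $\wt{Y}$ is unipotent, $A_Y$ is compact abelian. That $\Phi(\cdot, g)$ really descends to a function on $A_Y$ follows from the left $\wt{Y}(F)$-invariance of $f$ together with (1), (3), and (4), via a change of variable inside the $[C]$-integral. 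The direction $I_C \equiv 0 \Rightarrow I_B \equiv 0$ is immediate.

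For the converse, the key input is the covariance relation
\[
\Phi(y, x_0 g) \;=\; \psi_C([y, x_0])\, \Phi(y, g), \qquad x_0 \in \wt{X}(F),\ y \in A_Y.
\]
To derive this, I write $y x_0 = [y, x_0]\, x_0 y$ (valid because $[y, x_0] \in C$ by (5)), substitute $c \mapsto c\,[y, x_0]^{-1}$ inside the $[C]$-integral defining $\Phi(y, x_0 g)$, and then use the left $\wt{X}(F)$-invariance of $f$ combined with (1) and (3) to move the $x_0$ out of the integral. Hypothesis (6) asserts that, as $x_0$ runs over representatives of $(\wt{X}\cap C)(F)\bs \wt{X}(F)$, the characters $\chi_{x_0}(y) := \psi_C([y, x_0])$ exhaust the Pontryagin dual $\widehat{A_Y}$. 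Integrating the covariance over $A_Y$ therefore yields
\[
I_B(x_0 g) \;=\; \int_{A_Y} \chi_{x_0}(y)\, \Phi(y, g)\, dy \;=\; \widehat{\Phi}(\chi_{x_0}^{-1}, g).
\]
So if $I_B \equiv 0$ on $G_n(\BA)$, every Fourier coefficient of $\Phi(\cdot, g)$ vanishes, and Fourier inversion on the compact abelian group $A_Y$ forces $\Phi(\cdot, g) \equiv 0$; evaluating at $y = e$ gives $I_C(g) = 0$.

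The main technical obstacle is the covariance computation: it interleaves a $C$-shift by $[y, x_0]$ with a conjugation by $x_0$ and invokes five of the six hypotheses in precisely the right order. Once the covariance is in hand, the Fourier-inversion step is routine, because condition (6) is exactly what is needed to identify the characters of $A_Y$ with the left $\wt{X}(F)$-translates appearing in $I_B(x_0 g)$.
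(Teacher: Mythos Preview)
The paper does not actually prove this lemma: it is quoted from Corollary~7.1 of \cite{GRS11} (and Lemma~5.2 of \cite{JL13}) and invoked as a black box, so there is no in-paper argument to compare against. Your proof is correct and is precisely the standard ``root exchange'' argument used in those references: unfold $I_B$ as an $A_Y$-average of $I_C$, establish the covariance $\Phi(y,x_0g)=\psi_C([y,x_0])\Phi(y,g)$ via automorphy and conditions (1), (3), (5), then invoke (6) to recognise the translates $I_B(x_0g)$ as the full set of Fourier coefficients of $\Phi(\cdot,g)$ on the compact abelian group $A_Y$, so that Fourier inversion recovers $I_C$. Nothing is missing; the only cosmetic point is that you might make explicit that $A_Y$ is abelian because condition~(2) forces $(\wt{Y}\cap C)\bs\wt{Y}$ to be abelian, and compact because it is the adelic quotient of a unipotent $F$-group.
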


For simplicity, we will use $\psi_C$ to denote its extensions $\psi_B$ and $\psi_D$
in the remaining of the paper when we use Lemma \ref{nvequ}. Applying Lemma \ref{nvequ} to
the $\psi_{\udl{p}, \underline{a}}$-Fourier coefficients for automorphic forms on $G_n(\BA)$, we obtain a different proof of
a useful result \cite[Lemma 1.1]{GRS03}. For completeness and convenience, we include it here.

\begin{cor}\label{halfheisen}
Let
$
\underline{p}=[p_1^{e_1}p_2^{e_2}\cdots p_r^{e_r}]
$
be a standard symplectic partition of $2n$ with
$p_1\geq p_2\geq\cdots\geq p_r>0$ and $2n=\sum_{i=1}^r e_ip_i$. Then
the $\psi_{\udl{p}, \underline{a}}$-Fourier coefficient $\varphi^{\psi_{\udl{p}, \underline{a}}}$ of an automorphic form
$\varphi$ on $G_n(\BA)$ is non-vanishing if and only if the following integral
\begin{equation*}
\int_{[Y]}\int_{[V_{\udl{p},2}]} \varphi(vyg) \psi_{\udl{p}, \underline{a}}(v)^{-1} dvdy
\end{equation*}
is non-vanishing; and
if and only if the following integral
\begin{equation*}
\int_{[X]}\int_{[V_{\udl{p},2}]} \varphi(vxg) \psi_{\udl{p}, \underline{a}}(v)^{-1} dvdx
\end{equation*}
is non-vanishing.
Here the subgroups $X$ and $Y$ are defined in \eqref{equx} and \eqref{equy}, respectively.
\end{cor}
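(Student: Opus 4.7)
The plan is to deduce this corollary as a direct application of Lemma \ref{nvequ} to the quadruple $(C, \psi_C, \wt{X}, \wt{Y}) = (V_{\udl{p},2}, \psi_{\udl{p}, \underline{a}}, X, Y)$, where $X$ and $Y$ are the unipotent subgroups introduced in \eqref{equx} and \eqref{equy}. Once the six hypotheses of that lemma are checked, the equivalence of non-vanishing for the integrals over $[C]$, over $[D] = [CX]$, and over $[B] = [CY]$ is exactly the statement of the corollary (upon unfolding each semidirect product integration using a Fubini argument and noting $X \cap V_{\udl{p},2} = Y \cap V_{\udl{p},2} = \{1\}$).

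First I would verify the structural conditions (1)--(5). By construction, $X$ and $Y$ sit inside $V_{\udl{p},1}$ with $\log(X), \log(Y) \subset \mathfrak{g}_1$; since $V_{\udl{p},2}$ is normal in $V_{\udl{p},1}$, conditions (1), (2), and (5) follow immediately, and in fact $X \cap V_{\udl{p},2}$ and $Y \cap V_{\udl{p},2}$ are trivial (so $X, Y$ themselves are abelian and condition (4) is vacuous). The invariance of the character $\psi_{\udl{p},\underline{a}}$ under conjugation by $X(\BA)$ and $Y(\BA)$ (condition (3)) reduces, after exponentiating, to checking that for any $w \in \log(X) \cup \log(Y)$ and any $v \in \mathfrak{g}_2$ one has $\tr(X_{\underline{p},\underline{a}}[w,v]) = 0$, which holds because $[w,v] \in \mathfrak{g}_3 \oplus \mathfrak{g}_4 \oplus \cdots$ is orthogonal to $X_{\underline{p},\underline{a}} \in \mathfrak{g}_{-2}$ under the trace form.

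The main step is condition (6), the non-degenerate pairing
\[
(X \cap V_{\udl{p},2})(\BA) \times (Y \cap V_{\udl{p},2})(\BA) \to \BC^*, \quad (x,y)\mapsto \psi_{\udl{p},\underline{a}}([x,y]).
\]
Here, however, both $X \cap V_{\udl{p},2}$ and $Y \cap V_{\udl{p},2}$ are trivial, so strictly speaking one applies Lemma \ref{nvequ} to $\wt{X} \cdot V_{\udl{p},2}$ and $\wt{Y} \cdot V_{\udl{p},2}$ after identifying the pairing induced on the Heisenberg quotient. More precisely, Lemma \ref{key1} identifies $V_{\udl{p},1}/\ker_{V_{\udl{p},2}}(\psi_{\udl{p},\underline{a}})$ with a Heisenberg group whose symplectic form on $W \cong V_{\udl{p},1}/V_{\udl{p},2}$ is given by the commutator pairing $\langle w_1, w_2\rangle = \tr(X_{\underline{p},\underline{a}}[\log w_1, \log w_2])$, and Corollary \ref{polar} asserts that $\log(X) \oplus \log(Y)$ is a Lagrangian polarization of $W$. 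The explicit bracket computations carried out inside the proof of Lemma \ref{key1}, case by case for $(p_i, p_j)$ of opposite parity, exhibit the pairing $\psi_{\udl{p},\underline{a}}([x,y])$ as a non-degenerate bi-character between $X$ and $Y$, yielding in particular the required Pontryagin duality between $X(F)\bs X(\BA)$ and $Y(F)\bs Y(\BA)$. This is the step that requires real work, but all of it is already done inside the proof of Lemma \ref{key1}.

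With all six conditions verified, Lemma \ref{nvequ} applied to $f = \varphi$ gives that the vanishing of the $\psi_{\udl{p},\underline{a}}$-Fourier coefficient is equivalent to the vanishing of the integral over $[D] = [V_{\udl{p},2} \cdot X]$, and also to the vanishing of the integral over $[B] = [V_{\udl{p},2} \cdot Y]$. Writing $D$ and $B$ as semidirect products and interchanging the order of integration (permissible because the inner integral over the compact quotient $[V_{\udl{p},2}]$ converges absolutely for any automorphic form $\varphi$) rewrites these integrals in precisely the form stated in the corollary, completing the proof.
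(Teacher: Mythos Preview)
Your proposal is correct and follows essentially the same approach as the paper: apply Lemma~\ref{nvequ} to the quadruple $(V_{\udl{p},2}, \psi_{\udl{p},\underline{a}}, X, Y)$, using Lemma~\ref{key1} and Corollary~\ref{polar} to supply the Heisenberg structure and Lagrangian polarization needed for condition~(6). The paper's proof is a two-sentence invocation of exactly these ingredients; you have simply unpacked conditions (1)--(5) explicitly and correctly noted that the pairing in (6) is really between the quotients $X \cong (X\cap C)\bs X$ and $Y \cong (Y\cap C)\bs Y$ (the intersections being trivial here), which is what the duality clauses in (6) actually demand.
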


\begin{proof}
By Lemma \ref{key1} and Corollary \ref{polar}, $V_{\udl{p},1} / \ker_{V_{\udl{p},2}}(\psi_{\udl{p}, \underline{a}})$ has a Heisenberg
structure $W \oplus Z$, where $Z \cong V_{\udl{p},2}/\ker_{V_{\udl{p},2}}(\psi_{\udl{p}, \underline{a}})$,
and $X \oplus Y$ is a polarization of $W$, where $X, Y$ are defined in \eqref{equx}, \eqref{equy}.
This implies directly that the quadruple $(V_{\udl{p},2}, \psi_{\udl{p}, \underline{a}}, X, Y)$
satisfies all the conditions for Lemma \ref{nvequ}.
\end{proof}

\section{Certain Properties of Fourier Coefficients}

In our study of Fourier coefficients of automorphic forms, we have often to consider the Fourier development in stages, which leads
to consider Fourier coefficients attached to composite partitions. We consider two types of composite partitions in Section 3.2,
the proofs of which extend the arguments used in the proofs of \cite[Lemmas 2.5 and 2.6]{GRS03}. For completeness,
we give fully detailed proofs here, including in Section 3.1 a proof for a technical lemma \cite[Lemma 2.6]{GRS03}.
We refer to \cite[Section 1]{GRS03} for definition of composite partitions.
Note that $G_n(\BA)=\Sp_{2n}(\BA)$ or $\tilsp_{2n}(\BA)$.

\subsection{A Technical Lemma}
The technical, but useful lemma we referred here is Lemma 2.6 of \cite{GRS03}. In order to understand the technical insight of the statement of
the lemma and its proof, we provide a repeatable argument based on Lemma \ref{nvequ} and give a complete proof and
more explicit statement.
%Note that this lemma works also for any irreducible automorphic
%representations of $\tilsp_{2n}(\BA)$.

\begin{lem}[Lemma 2.6, \cite{GRS03}] \label{lem1}
Assume that $\pi$ is an irreducible automorphic representations of
$G_n(\BA)$, and $\ul{p}=[(2n_1)p_2^{e_2}\cdots p_r^{e_r}]$ is a
symplectic partition of $2n$, $2n_1 \geq p_2 \geq \cdots \geq p_r$; $e_i=1$ if $p_i$ is even; and $e_i=2$ if $p_i$ is odd.
Then, $\pi$ has a nonzero $\psi_{\ul{p}, \{a_1\} \cup \ul{a}}$-Fourier
coefficient attached to $\ul{p}$ if and only if
it has a nonzero $\psi_{[(2n_1)1^{2n-2n_1}], a_1} \cdot
\psi_{[p_2^{e_2}\cdots p_r^{e_r}], \ul{a}}$-Fourier
coefficient attached to the composite partition
$[(2n_1)1^{2n-2n_1}] \circ [p_2^{e_2}\cdots p_r^{e_r}]$,
where $a_1\in F^*/(F^*)^2$ and  $\ul{a}= \{a_i\in F^*/(F^*)^2\ |\ e_i =1, 2 \leq i \leq r\}$.
\end{lem}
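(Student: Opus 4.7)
The plan is to prove the stated equivalence directly by iterated application of the non-vanishing equivalence of Lemma \ref{nvequ} (root exchange). First I would write down explicit root-subgroup generators for both integration domains. On one hand, the unipotent $V_{\ul{p},2}$ is equipped with the character $\psi_{\ul{p},\{a_1\}\cup\ul{a}}$ defined by (2.3) relative to the semisimple element $\CH_{\ul{p}}$. On the other hand, the composite Fourier coefficient is taken over the unipotent
$$V^{c} \;=\; V_{[(2n_1)1^{2n-2n_1}],2}\cdot V_{[p_2^{e_2}\cdots p_r^{e_r}],2}$$
with the product character $\psi^{c}=\psi_{[(2n_1)1^{2n-2n_1}],a_1}\cdot\psi_{[p_2^{e_2}\cdots p_r^{e_r}],\ul{a}}$, the second factor living inside the symplectic factor $\Sp_{2n-2n_1}$ of the Levi $L_{[(2n_1)1^{2n-2n_1}]}$. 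Because the $2n_1$-block occupies the leading coordinate block in both setups, the two characters agree on the intersection of their supports.

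Next I would identify the finite family of root subgroups that distinguish $V_{\ul{p},2}$ from $V^{c}$. These are precisely the linking root subgroups joining the first $2n_1$-block to each of the subsequent blocks $p_i$ ($i\geq 2$), along with their images under the symplectic involution. The linking subgroups can be decomposed into pairs of abelian unipotent subgroups $\wt{X}^{(s)},\wt{Y}^{(s)}$ in the same spirit as the subgroups $X^{i,j},Y^{i,j}$ built in the proof of Lemma \ref{key1}, so that at each stage $\wt{X}^{(s)}\oplus\wt{Y}^{(s)}$ corresponds to one $\CH_{\ul{p}}$-weight layer of linking roots and the commutator $[\wt{X}^{(s)},\wt{Y}^{(s)}]$ lands in a root subgroup on which the character being tracked is non-trivial.

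Starting from the composite Fourier integral, I would then apply Lemma \ref{nvequ} recursively to a sequence of quadruples $(C^{(s)},\psi^{(s)},\wt{X}^{(s)},\wt{Y}^{(s)})$, where $C^{(s)}$ and $\psi^{(s)}$ are the running unipotent and character at stage $s$. Each application trades a trivial integration over $\wt{Y}^{(s)}$ for one over $\wt{X}^{(s)}$ without changing the non-vanishing status of the overall integral. After finitely many such moves, the running integration domain becomes $V_{\ul{p},2}$ and the running character becomes $\psi_{\ul{p},\{a_1\}\cup\ul{a}}$, yielding the asserted equivalence.

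The main obstacle will be verifying the six hypotheses of Lemma \ref{nvequ} at every stage, and in particular the non-degenerate pairing condition (6). This requires tracking the $\CH_{\ul{p}}$-weights of all roots in play and using the size inequality $2n_1\geq p_i$ for $i\geq 2$ to guarantee that commutators of roots in $\wt{X}^{(s)}$ with roots in $\wt{Y}^{(s)}$ land exactly in the $\psi_{\ul{p},\{a_1\}\cup\ul{a}}$-supporting positions, never in the kernel. This is the same combinatorial bookkeeping that was carried out in Lemma \ref{key1}, and reusing the coordinate-block conventions set up there should reduce the verification to a finite routine check, once one commits to an ordering of the pairs $(1,i)$ with $i\geq 2$ and to an ordering of the $\CH_{\ul{p}}$-weight layers within each such pair.
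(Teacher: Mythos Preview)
Your strategy—iterated root exchange via Lemma \ref{nvequ} to pass between the Fourier coefficient attached to $\ul{p}$ and the one attached to the composite partition—is exactly the approach the paper takes. However, two intermediate devices used in the paper are missing from your outline, and without them the exchange does not run cleanly.

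First, the paper does not pass directly between $V_{\ul{p},2}$ and $V^{c}$. It first invokes Corollary \ref{halfheisen} on the $\ul{p}$-side to enlarge the integration from $V_{\ul{p},2}$ to $YV_{\ul{p},2}$, where $Y$ is the Lagrangian from \eqref{equy}; and at the composite end the result is identified with an integral over $\ol{Y}V_{[(2n_1)1^{2n-2n_1}],2}$ times $\wt{Y}V_{[p_2^{e_2}\cdots p_r^{e_r}],2}$, again matched to the genuine composite coefficient by Corollary \ref{halfheisen}. The $Y$-pieces are what supply the lower-triangular root subgroups (the $p_1$-block in \eqref{lem1equ4}) that participate in the exchange; without them there is nothing to swap.

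Second, before any exchanges the paper conjugates by a Weyl element $\omega_2=\diag(I_{n_1},\omega_1,I_{n_1})$ that reorders the inner $\Sp_{2n-2n_1}$-block so that $\CH_{[p_2^{e_2}\cdots p_r^{e_r}]}$ has nonincreasing exponents. After this conjugation the exchange pairs are indexed row by row: for each $i_0\le i\le n_1-1$ one has roots $\alpha_{i,j}$ (zero entries of $q_1$) and $\beta_{i,j}$ (nonzero entries of $p_1$) satisfying $\alpha_{i,j}+\beta_{i,j}=e_i-e_{i+1}$, so the commutator lands on a simple root of $N_{n_1}$ where the character is $\psi(z_{i,i+1})\neq 1$. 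This is precisely how condition (6) of Lemma \ref{nvequ} is verified. Your description of the pairing (``commutators land in the $\psi_{\ul{p}}$-supporting positions'') is correct in spirit, but note that the relevant support is in the $N_{n_1}$-part of the leading block, not in the $\mathfrak{g}_1$-layer pairing of Lemma \ref{key1}; the pairs here are not the $X^{i,j},Y^{i,j}$ of that lemma but rather upper/lower linking roots whose sum is a simple root $e_i-e_{i+1}$.
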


\begin{proof}
Take $\varphi \in \pi$.
We are going to start from a $\psi_{\ul{p}, \{a_1\} \cup \ul{a}}$-Fourier
coefficient of $\varphi$ attached to $\ul{p}$.
For definition of composite partitions and the attached Fourier coefficients, we
refer to Section 1 of \cite{GRS03}.

By \eqref{fc}, $\varphi^{\psi_{\ul{p}, \{a_1\} \cup \ul{a}}}$
is defined as follows:
\begin{equation}\label{lem1equ1}
\int_{[V_{\ul{p}, 2}]} \varphi(vg) \psi_{\ul{p}, \{a_1\} \cup \ul{a}}^{-1}(v) dv.
\end{equation}

By Corollary \ref{halfheisen} (see also Lemma 1.1 of \cite{GRS03}),
the $\psi_{\ul{p}, \{a_1\} \cup \ul{a}}$-Fourier
coefficient of $\varphi$ in \eqref{lem1equ1} is non-vanishing if and only if
the following integral is non-vanishing:
\begin{equation}\label{lem1equ2}
\int_{[YV_{\ul{p}, 2}]} \varphi(vyg) \psi_{\ul{p}, \{a_1\} \cup \ul{a}}^{-1}(v) dvdy,
\end{equation}
where $Y$ is a group defined in \eqref{equy} corresponding to
the partition $\ul{p}$.

Let $\omega_1$ be a Weyl element of $G_{n-n_1}$ which sends the one-dimensional
toric subgroup $\CH_{[p_2^{e_2}\cdots p_r^{e_r}]}$ defined in \eqref{equtorus}
to the following toric subgroup:
%\begin{equation*}
$\{\diag(t^{p_2-1}, \ldots, t^{1-p_2})\}$,
%\end{equation*}
where the exponents of $t$ are of non-increasing order.

Let $\omega_2 = \diag(I_{n_1}; \omega_1; I_{n_1})$.
Conjugating by $\omega_2$, the integral in
\eqref{lem1equ2} becomes:
\begin{equation}\label{lem1equ3}
\int_{[W]} \varphi(w \omega_2 g) \psi_W^{-1}(w) dw,
\end{equation}
where $W= \omega_2 YV_{\ul{p}, 2} \omega_2^{-1}$,
$\psi_W(w)=\psi_{\ul{p}, \{a_1\} \cup \ul{a}}(\omega_2^{-1} w \omega_2)$.
Elements in $W$ have the following form
\begin{equation}\label{lem1equ4}
w=\begin{pmatrix}
z & q_1 & q_2\\
0 & v' & q_1^*\\
0 & 0 & z^*
\end{pmatrix}
\begin{pmatrix}
I_{2k+1} & 0 & 0\\
p_1 & I_{2n-4k-2} & 0\\
0 & p_1^* & I_{2k+1}
\end{pmatrix},
\end{equation}
where $z\in N_{n_1}$; $q_1 \in \Mat_{(n_1) \times (2n-2n_1)}$
with certain conditions;
$q_2 \in \Mat'_{(n_1) \times (n_1)}$, which consists of
matrices $q$ in $\Mat_{(n_1) \times (n_1)}$ satisfying the property
that $q^t v_{n_1} - v_{n_1} q=0$, and $v_{n_1}$ is a matrix only with
ones on the second diagonal;
$p_1 \in \Mat_{(2n-2n_1)\times (n_1)}$
with certain conditions; and finally
$v' \in \omega_1 \wt{Y}V_{[p_2^{e_2}\cdots p_r^{e_r}],2} \omega_1^{-1}$
with $\wt{Y}$ defined in \eqref{equy}
corresponding to the partition $[p_2^{e_2}\cdots p_r^{e_r}]$.
The character is given by
$$
\psi_W(w)
= \psi(\sum_{i=1}^{n_1-1} z_{i,i+1})
\psi(a_1 q_2(n_1, 1))
\psi_{[p_2^{e_2}\cdots p_r^{e_r}],\ul{a}}(\omega_1^{-1} v' \omega_1),
$$
for $w \in W$ as above.

Now, we specify the conditions on $q_1$ and $p_1$ above.
For $q_1$, the last row is zero, assume that $1 \leq i_0 \leq n_1-1$ is the
first row that $q_1$ has zero entries. For $i_0 \leq i \leq n_1-1$,
assume that $\alpha_{i,1}, \alpha_{i,2}, \ldots, \alpha_{i,s_i}$ are
all the roots such that $q_1$ has zero entries at corresponding places
from right to the left with $s_i \in \BZ_{>0}$.
Then $i_0+1$ is the first column that $p_1$ had
non-zero entries. For $i_0 \leq i \leq n_1-1$,
in the column $i+1$,
there are exactly $s_i$ roots $\beta_{i,j}$ with $1 \leq j \leq s_i$,
such that $p_1$ has non-zero entries at corresponding places
from bottom to top.
Finally, for $i_0 \leq i \leq n_1-1$ and
$1 \leq j \leq s_i$, $\alpha_{i,j} + \beta_{i,j} = e_i - e_{i+1}$.

Define $W=\prod_{i=i_0}^{n_1-1} \prod_{j=1}^{s_i} X_{\beta_{i,j}} \wt{W}$
with $\prod_{i=i_0}^{n_1-1} \prod_{j=1}^{s_i} X_{\beta_{i,j}} \cap \wt{W}=\{1\}$ and define
$\psi_{\wt{W}} = \psi_{W}|_{\wt{W}}$. We are getting ready to apply Lemma \ref{nvequ}.

First, for $i=i_0$, we consider the following sequence of quadruples
\begin{align*}
&(\prod_{i=i_0+1}^{n_1-1} \prod_{j=1}^{s_i} X_{\beta_{i,j}} \prod_{j=2}^{s_{i_0}} X_{\beta_{i_0,j}}\wt{W}, \psi_{\wt{W}},
X_{\alpha_{i_0,1}}, X_{\beta_{i_0,1}}),\\
&(X_{\alpha_{i_0,1}}\prod_{i=i_0+1}^{n_1-1} \prod_{j=1}^{s_i} X_{\beta_{i,j}} \prod_{j=3}^{s_{i_0}} X_{\beta_{i_0,j}}\wt{W}, \psi_{\wt{W}},
X_{\alpha_{i_0,2}}, X_{\beta_{i_0,2}}),\\
& \cdots \\
& (\prod_{j=1}^{s_{i_0}-1}X_{\alpha_{i_0,j}}\prod_{i=i_0+1}^{n_1-1} \prod_{j=1}^{s_i} X_{\beta_{i,j}} \wt{W}, \psi_{\wt{W}},
X_{\alpha_{i_0,s_{i_0}}}, X_{\beta_{i_0,s_{i_0}}}).
\end{align*}
Applying Lemma \ref{nvequ} repeatedly, we see that the integral in
\eqref{lem1equ3} is non-vanishing if and only if the following integral is
non-vanishing
\begin{equation}\label{lem1equ5}
\int_{[\prod_{j=1}^{s_{i_0}}X_{\alpha_{i_0,j}}\prod_{i=i_0+1}^{2k} \prod_{j=1}^{s_i} X_{\beta_{i,j}} \wt{W}]} \varphi(xw \omega_2 g) \psi_{\wt{W}}^{-1}(w) dxdw.
\end{equation}

Next, for $i=i_0+1$, we consider the following sequence of quadruples
\begin{align*}
&(\prod_{j=1}^{s_{i_0}}X_{\alpha_{i_0,j}}\prod_{i=i_0+2}^{n_1-1} \prod_{j=1}^{s_i} X_{\beta_{i,j}} \prod_{j=2}^{s_{i_0+1}} X_{\beta_{i_0+1,j}}\wt{W}, \psi_{\wt{W}},
X_{\alpha_{i_0+1,1}}, X_{\beta_{i_0+1,1}}),\\
&(X_{\alpha_{i_0+1,1}}\prod_{j=1}^{s_{i_0}}X_{\alpha_{i_0,j}}\prod_{i=i_0+2}^{n_1-1} \prod_{j=1}^{s_i} X_{\beta_{i,j}} \prod_{j=3}^{s_{i_0+1}} X_{\beta_{i_0+1,j}}\wt{W}, \psi_{\wt{W}},
X_{\alpha_{i_0+1,2}}, X_{\beta_{i_0+1,2}}),\\
& \cdots,\\
&(\prod_{j=1}^{s_{i_0+1}-1}X_{\alpha_{i_0+1,j}}\prod_{j=1}^{s_{i_0}}X_{\alpha_{i_0,j}}
\prod_{i=i_0+2}^{n_1-1} \prod_{j=1}^{s_i} X_{\beta_{i,j}} \prod_{j=3}^{s_{i_0+1}} X_{\beta_{i_0+1,j}}\wt{W},\\
& \psi_{\wt{W}},
X_{\alpha_{i_0+1,s_{i_0+1}}}, X_{\beta_{i_0+1,s_{i_0+1}}}).
\end{align*}
Applying Lemma \ref{nvequ} repeatedly, we see that the integral in
\eqref{lem1equ5} is non-vanishing if and only if the following integral is
non-vanishing
\begin{equation}\label{lem1equ6}
\int_{[\prod_{i=i_0}^{i_0+1}\prod_{j=1}^{s_i}X_{\alpha_{i,j}}\prod_{i=i_0+2}^{n_1-1} \prod_{j=1}^{s_i} X_{\beta_{i,j}} \wt{W}]} \varphi(xw\omega_2g) \psi_{\wt{W}}^{-1}(w) dxdw.
\end{equation}

Then we continue the above procedure to consider the case of $i=i_0+2, i_0+3, \ldots, n_1-2$, and finally for $i=n_1-1$, we need to consider the following quadruples
\begin{align*}
&(\prod_{i=i_0}^{n_1-2}\prod_{j=1}^{s_i}X_{\alpha_{i_0,j}} \prod_{j=2}^{s_{n_1-1}} X_{\beta_{n_1-1,j}}\wt{W}, \psi_{\wt{W}},
X_{\alpha_{n_1-1,1}}, X_{\beta_{n_1-1,1}}),\\
&(X_{\alpha_{n_1-1,1}}\prod_{i=i_0}^{n_1-2}\prod_{j=1}^{s_i}X_{\alpha_{i_0,j}} \prod_{j=3}^{s_{n_1-1}} X_{\beta_{n_1-1,j}}\wt{W}, \psi_{\wt{W}},
X_{\alpha_{n_1-1,2}}, X_{\beta_{n_1-1,2}}),\\
& \cdots\\
& (\prod_{j=1}^{s_{n_1-1}-1}X_{\alpha_{n_1-1,j}}\prod_{i=i_0}^{n_1-2}\prod_{j=1}^{s_i}
X_{\alpha_{i_0,j}}  \wt{W}, \psi_{\wt{W}},
X_{\alpha_{n_1-1,s_{n_1-1}}}, X_{\beta_{n_1-1,s_{n_1-1}}}).
\end{align*}
Again, applying Lemma \ref{nvequ} repeatedly, we see that the integral in
\eqref{lem1equ6} is non-vanishing if and only if the following integral is
non-vanishing
\begin{equation}\label{lem1equ7}
\int_{[\prod_{i=i_0}^{n_1-1}\prod_{j=1}^{s_i}X_{\alpha_{i,j}}\wt{W}]} \varphi(xw\omega_2g) \psi_{\wt{W}}^{-1}(w) dxdw,
\end{equation}
which is exactly the following integral
\begin{align}\label{lem1equ8}
\begin{split}
& \int_{[\omega_1 \wt{Y}V_{[p_2^{e_2}\cdots p_r^{e_r}],2} \omega_1^{-1}]}
\int_{[\ol{Y} V_{[(2n_1)1^{2n-2n_1}],2}]}
\varphi(v_1 v_2 \omega_2 g) \\
& \psi_{[(2n_1)1^{2n-2n_1}], a_1}(v_2) \cdot
\psi_{[p_2^{e_2}\cdots p_r^{e_r}], \ul{a}}(\omega_1^{-1} v_1 \omega_1)
dv_1 dv_2,
\end{split}
\end{align}
where $v_1 \in \omega_1 \wt{Y}V_{[p_2^{e_2}\cdots p_r^{e_r}],2} \omega_1^{-1}$,
embedded in $G_n$ by $v_1 \mapsto \diag(I_{n_1}, v_1, I_{n_1})$,
and identified with the image, and
$\ol{Y}$ is the group defined in \eqref{equy}
corresponding to the partition $[(2n_1)1^{2n-2n_1}]$.

By the discussion at the end of Section 1 of
\cite{GRS03}, it is easy to see that the integral in
\ref{lem1equ8} is non-vanishing if and only if a
$\psi_{[(2n_1)1^{2n-2n_1}], a_1} \cdot
\psi_{[p_2^{e_2}\cdots p_r^{e_r}], \ul{a}}$-Fourier
coefficient defined in (1.4) of \cite{GRS03} is non-vanishing.

This completes the proof of the lemma.
\end{proof}

\subsection{Composite partitions and Fourier coefficients}
We consider Fourier coefficients of automorphic representations attached to two types of composite partitions, which are stated
as two propositions.

\begin{prop}\label{prop1}
Let $\pi$ be an irreducible automorphic representation of $G_n(\BA)$ realized in the space of automorphic forms.
Assume that $\pi$ has no nonzero Fourier coefficients attached the partition $[(2k+2)1^{2n-2k-2}]$.
Then the following are equivalent:
\begin{enumerate}
\item[(1)] $\pi$ has a nonvanishing Fourier coefficient attached to the composite partition
$[(2k)1^{2n-2k}] \circ [(2k+2)1^{2n-4k-2}]$
with respect to the characters $\psi_{[(2k)1^{2n-2k}],\alpha}$
and $\psi_{[(2k+2)1^{2n-4k-2}], \beta}$, where
$\alpha, \beta \in F^*/(F^*)^2$;
\item[(2)] $\alpha$ and $\beta$ are related by $\beta = -\alpha \mod (F^*)^2$, and
$\pi$ has a nonvanishing Fourier coefficient attached to the partition $[(2k+1)^21^{2n-4k-2}]$.
\end{enumerate}
\end{prop}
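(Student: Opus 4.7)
The plan is to follow the strategy of Lemma \ref{lem1}: one transforms the composite Fourier coefficient in (1), via repeated application of Lemma \ref{nvequ} and conjugation by a suitable Weyl element, into the Fourier coefficient attached to $[(2k+1)^21^{2n-4k-2}]$ in (2), reading off along the way the quadratic constraint $\beta\equiv -\alpha\pmod{(F^*)^2}$. First I would unfold the composite Fourier coefficient as in Section 1 of \cite{GRS03}: it is the iterated integral obtained by integrating an automorphic form $\varphi\in\pi$ first against $\psi_{[(2k)1^{2n-2k}],\alpha}$ over $V_{[(2k)1^{2n-2k}],2}$, which identifies the stabilizer with $\Sp_{2n-2k}$, and then against $\psi_{[(2k+2)1^{2n-4k-2}],\beta}$ over $V_{[(2k+2)1^{2n-4k-2}],2}$ inside this smaller group. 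Using Corollary \ref{halfheisen} I enlarge each of the two integrals by adjoining the polarization subgroup $Y$ from \eqref{equy}, and combine the two unipotents together with the embedded stabilizer into a single unipotent subgroup $U\subset \Sp_{2n}$ carrying one character $\psi_{\alpha,\beta}$ assembled from $\alpha$ and $\beta$.

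Next, I would conjugate by a Weyl element $\omega$, analogous to $\omega_2$ in the proof of Lemma \ref{lem1}, which interleaves the two Jordan blocks of sizes $2k$ and $2k+2$ into two blocks of size $2k+1$, so that $\omega U\omega^{-1}$ sits close to $V_{[(2k+1)^21^{2n-4k-2}],2}$. On the Whittaker-type entries, the conjugated character $\psi_{\alpha,\beta}^{\omega}$ already agrees with the character $\psi_{[(2k+1)^21^{2n-4k-2}]}$ defined in \eqref{ch}; the residual disagreement comes from two extra off-diagonal entries, weighted by $\alpha$ and $\beta$ respectively, which under the symplectic pairing produce a $2\times 2$ symmetric form with Gram matrix $\diag(\alpha,\beta)$ sitting inside the Levi $L_{[(2k+1)^21^{2n-4k-2}]}$. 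Since each part of $[(2k+1)^21^{2n-4k-2}]$ is odd, the recipe of Section 2 attaches no $a_i$ parameter and the standard character involves only Whittaker-type data, so the residual form $\diag(\alpha,\beta)$ must be equivalent to the split hyperbolic form via a change of variables inside $L_{[(2k+1)^21^{2n-4k-2}]}(F)$; this happens precisely when $\alpha\beta\equiv -1\pmod{(F^*)^2}$, i.e., $\beta\equiv -\alpha\pmod{(F^*)^2}$.

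To make this equivalence rigorous, I would Fourier-expand along a one-parameter root subgroup $X_\gamma$ lying in the complement of $\omega U\omega^{-1}$ inside $V_{[(2k+1)^21^{2n-4k-2}],2}$. The characters of $X_\gamma$ are permuted by the $\GL_1$-factor of $L_{[(2k+2)1^{2n-2k-2}]}$ into two orbits: the trivial/closed orbit contributes a Fourier coefficient attached to the strictly larger partition $[(2k+2)1^{2n-2k-2}]$, which vanishes by the standing hypothesis, so only the open orbit survives. A final application of Lemma \ref{nvequ} absorbs $X_\gamma$ into $V_{[(2k+1)^21^{2n-4k-2}],2}$ and yields the Fourier coefficient in (2). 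Every step is reversible (using Corollary \ref{halfheisen}, Lemma \ref{nvequ}, and the vanishing assumption), so the implication (2)$\Rightarrow$(1) follows by reading the same chain of equivalences backwards. The main obstacle is the combinatorial bookkeeping: one must verify that $\omega U\omega^{-1}$ differs from $V_{[(2k+1)^21^{2n-4k-2}],2}$ exactly by the single root subgroup $X_\gamma$, and that the non-open $\GL_1$-orbit of characters really produces a Fourier coefficient attached to $[(2k+2)1^{2n-2k-2}]$ rather than to some other partition, so that the vanishing hypothesis can be invoked.
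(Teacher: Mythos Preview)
Your overall strategy---Weyl conjugation to interleave the blocks, Fourier expansion to kill unwanted terms via the vanishing hypothesis, and Lemma \ref{nvequ} to exchange root subgroups---matches the paper's. But two concrete points in your outline do not hold as stated.

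First, the claim that $\omega U\omega^{-1}$ differs from $V_{[(2k+1)^21^{2n-4k-2}],2}$ by a \emph{single} root subgroup $X_\gamma$ is false. In the paper's proof the gap is substantially larger: after the first Weyl conjugation one needs a Fourier expansion along $X_{e_1-e_3}$, then a conjugation by a block-diagonal element $\bar\epsilon$, then a second Weyl element $\omega'$, then a full sequence of Lemma \ref{nvequ} applications (with pairs $(X_{\gamma_i},X_{\eta_i})$ and $(R,C)$), and finally a second Fourier expansion along $X_{e_{k+1}+e_{k+1}}$. Your bookkeeping worry at the end is well-founded, but the discrepancy is much more than one root.

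Second, and more importantly, your argument for the constraint $\beta\equiv-\alpha$ is incomplete. You say the form $\diag(\alpha,\beta)$ ``must'' be hyperbolic for the character to lie in the $L_{[(2k+1)^2\ldots]}$-orbit of the standard one. That only shows that when $\beta\not\equiv-\alpha$ the integral is \emph{not} the Fourier coefficient in (2); it does not show the integral vanishes, so (1)$\Rightarrow$(2) is not yet established. The paper closes this gap by an explicit computation: when $\alpha+\beta\neq 0$ one conjugates by the block-diagonal element built from $A=\frac{1}{\alpha+\beta}\begin{psmallmatrix}\beta&\alpha\\-1&1\end{psmallmatrix}$ and recognizes the resulting integral as a Fourier coefficient attached to $[(2k+2)(2k)1^{2n-4k-2}]$, which vanishes by the standing hypothesis together with Lemma \ref{lem1}. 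This second invocation of the vanishing hypothesis is essential and is missing from your outline; the Levi-orbit heuristic alone does not force the composite coefficient to be zero when $\beta\not\equiv-\alpha$.

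A minor correction: you have the roles of the orbits reversed in places. In the first Fourier expansion the \emph{trivial} term is the one attached to the larger partition $[(2k+2)(2k)1^{2n-4k-2}]$ (and dies), while in the second expansion it is the \emph{non-trivial} terms that contain an inner integral attached to $[(2k+2)1^{2n-2k-2}]$ (and die), leaving the trivial term to produce the coefficient in (2).
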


\begin{proof}
Assume that $\pi$ has a nonvanishing Fourier coefficient attached to the composite partition
$[(2k)1^{2n-2k}] \circ [(2k+2)1^{2n-4k-2}]$ with respect to the characters $\psi_{[(2k)1^{2n-2k}],\alpha}$
and $\psi_{[(2k+2)1^{2n-4k-2}], \beta}$,
where $\alpha, \beta \in F^*/(F^*)^2$, by definition (\cite[Section 1]{GRS03}) and Corollary \ref{halfheisen},
there is a $\varphi \in \pi$ such that the following integral is nonvanishing:
\begin{align}\label{thm1equ1}
\begin{split}
& \int_{[Y'V_{[(2k+2)1^{2n-4k-2}], 2}]}
\int_{[YV_{[(2k)1^{2n-2k}],2}]}
\varphi(v_1yv_2y'g) \\
& \psi^{-1}_{[(2k)1^{2n-2k}], \alpha}(v_1)
\psi^{-1}_{[(2k+2)1^{2n-4k-2}], \beta}(v_2)
dv_1 dy dv_2 dy',
\end{split}
\end{align}
where $Y$ is the group defined in \eqref{equy} corresponding to the partition
$[(2k)1^{2n-2k}]$, $Y'$ is the group defined in \eqref{equy} corresponding to the partition
$[(2k+2)1^{2n-4k-2}]$, and $Y'V_{[(2k+2)1^{2n-4k-2}], 2}$ is embedded
into $\Sp_{2n}$ by the map $v \mapsto \diag(I_k, v, I_k)$.

Let $V:=Y'V_{[(2k+2)1^{2n-4k-2}], 2} YV_{[(2k)1^{2n-2k}],2}$,
and for $v =v_1 y v_2y' \in V$, define
$$
\psi_V(v)=\psi_{[(2k)1^{2n-2k}], \alpha}(v_1)
\psi_{[(2k+2)1^{2n-4k-2}], \beta}(v_2).
$$
Then the integral in \eqref{thm1equ1} becomes:
\begin{equation}\label{thm1equ4}
\int_{[V]} \varphi(vg) \psi_V^{-1}(v) dv.
\end{equation}
Following \eqref{equtorus}, we define the following one-dimensional
toric subgroup $\CH:=\CH_{[(2k)1^{2n-2k}] \circ [(2k+2)1^{2n-4k-2}]}$
corresponding to the composite partition $[(2k)1^{2n-2k}] \circ [(2k+2)1^{2n-4k-2}]$:
\begin{equation}\label{thm1equ2}
\CH(t)=\diag(T_1; T_2; I_{2n-4k-2}; T_2^*; T_1^*),
\end{equation}
where $T_1=\diag(t_1^{2k-1}, t_1^{2k-3}, \ldots, t_1)$,
$T_2=\diag(t_2^{2k+1}, t_2^{2k-1}, \ldots, t_2)$.
Note that actually here $t_1 = t_2=t$, we just want to label them
to make the definition of certain Weyl element easier.

Let $\omega$ be a Weyl element which sends $\CH$ to the following toric subgroup:
\begin{equation}\label{thm1equ3}
\{\diag(T_3; I_{2n-4k-2}; T_3^*)\},
\end{equation}
where
$$
T_3=\diag(t_2^{2k+1}; t_2^{2k-1}, t_1^{2k-1}; t_2^{2k-3}, t_1^{2k-3};
\ldots; t_2^3, t_1^3; t_2, t_1).
$$
Conjugating by $\omega$, the integral in \eqref{thm1equ4} becomes
\begin{equation}\label{thm1equ5}
\int_{[V^{\omega}]} \varphi(v \omega g) \psi_{V^{\omega}}^{-1}(v) dv,
\end{equation}
where $V^{\omega} = \omega V \omega^{-1}$,
and for $v \in V^{\omega}$, $\psi_{V^{\omega}}(v)=\psi_V(\omega^{-1} v \omega)$.
Elements $v \in V^{\omega}$ have the following form:
\begin{equation}\label{thm1equ6}
\begin{pmatrix}
z & q_1 & q_2\\
0 & I_{2n-4k-2} & q_1^*\\
0 & 0 & z^*
\end{pmatrix},
\end{equation}
where $z$ is in the unipotent radical of the
parabolic subgroup of $\GL_{2k+1}$
with Levi isomorphic to $\GL_1 \times \GL_2 \times \cdots \times \GL_2$
($k$-copies of $\GL_2$), but $z_{1,3}=0$;
$q_1 \in \Mat_{(2k+1) \times (2n-4k-2)}$
with $q_1(i,j)=0$ for any $i=2k, 2k+1$ and $j=1,2,\ldots, n-2k-1$; and finally
$q_2 \in \Mat'_{(2k+1) \times (2k+1)}$, which
is the set of matrices in $\Mat_{(2k+1) \times (2k+1)}$
with property that $q_2^t v_{2k+1} - v_{2k+1} q_2 =0$, where
$v_{2k+1}$ is a matrix only with ones on the second diagonal.
The character is given by
\begin{align}\label{thm1equ7}
\begin{split}
\psi_{V^{\omega}}(v)
= & \psi(z_{1,2})
\cdot\psi(\sum_{i=1}^{k-1} z_{2(i-1)+2, 2(i-1)+4} +
z_{2(i-1)+3, 2(i-1)+5})\\
&\cdot\psi(\beta q_2(2k,2) + \alpha q_2(2k+1,1)).
\end{split}
\end{align}
for $v\in V^\omega$ as in \eqref{thm1equ6}.

Let $X_{e_1-e_3}$ be the root subgroup corresponding to the
root $e_1-e_3$. It follows from the structure of
elements in $V^{\omega}$ that $X_{e_1-e_3}$ normalizes
$V^{\omega}$. Hence we are able to take the Fourier
expansion of the integral in \eqref{thm1equ5}
along $X_{e_1-e_3}$. After the Fourier expansion,
the integral in \eqref{thm1equ5} becomes
\begin{align}\label{thm1equ8}
\begin{split}
& \sum_{\gamma \in F^*} \int_{[X_{e_1-e_3}]}
\int_{[V^{\omega}]} \varphi(v x \iota(\gamma) \omega g) \psi_{V^{\omega}}^{-1}(v)
\psi^{-1}(x) dv dx\\
+ & \int_{[X_{e_1-e_3}]}
\int_{[V^{\omega}]} \varphi(v x \omega g) \psi_{V^{\omega}}^{-1}(v)
 dv dx,
\end{split}
\end{align}
where $\iota(\gamma) = \diag(\gamma, I_{2n-2}, \gamma^{-1})$.
Note that
up to conjugation by a Weyl element, the integral
$$
\int_{[X_{e_1-e_3}]}
\int_{[V^{\omega}]} \varphi(v x \omega g) \psi_{V^{\omega}}^{-1}(v)dv dx
$$
is essentially a Fourier coefficient attached to $[(2k+2)(2k)1^{2n-4k-2}]$, which
is identically zero, since $\pi$ has no nonzero Fourier coefficients attached the partition $[(2k+2)1^{2n-2k-2}]$.
Therefore, the integral in \eqref{thm1equ5} is nonvanshing if and only if there is at least one $\gamma \in F^*$ such that
the following integral is nonvanishing:
\begin{equation}\label{thm1equ9}
\int_{[X_{e_1-e_3}]}
\int_{[V^{\omega}]} \varphi(v x \iota(\gamma) \omega g) \psi_{V^{\omega}}^{-1}(v)
\psi^{-1}(x) dv dx.
\end{equation}
Let $U = V^{\omega} X_{e_1-e_3}$, and for $u=vx \in U$,
define $\psi_U(vx) = \psi_{V^{\omega}}(v)
\psi(x)$. The integral in \eqref{thm1equ9}
becomes
\begin{equation}\label{thm1equ10}
\int_{[U]}
\varphi(u \iota(\gamma) \omega g) \psi_{U}^{-1}(u)
du.
\end{equation}

If $\beta \neq -\alpha$ mod $(F^*)^2$ in the character $\psi_{V^{\omega}}$ as expressed in \eqref{thm1equ7}, take
$A = \frac{1}{\alpha+\beta} \begin{pmatrix}
\beta & \alpha\\
-1 & 1
\end{pmatrix}$, and define
$$
\epsilon = \diag(1, A, \ldots, A, I_{2n-4k-2}, A^*, \ldots, A^*, 1).
$$
Conjugating by $\epsilon$, the integral
in \eqref{thm1equ10} becomes:
\begin{equation}\label{thm1equ11}
\int_{[U^{\epsilon}]}
\varphi(u \epsilon \iota(\gamma) \omega g) \psi_{U^{\epsilon}}^{-1}(u)
du,
\end{equation}
where $U^{\epsilon}:= \epsilon U \epsilon^{-1}$, and
for $u \in U^{\epsilon}$, $\psi_{U^{\epsilon}}(u) = \psi_U(\epsilon^{-1} u \epsilon)$.
Elements $u \in U^{\epsilon}$ have the following form:
\begin{equation}\label{thm1equ12}
\begin{pmatrix}
z & q_1 & q_2\\
0 & I_{2n-4k-2} & q_1^*\\
0 & 0 & z^*
\end{pmatrix},
\end{equation}
with $z$ in the unipotent radical of the
parabolic subgroup of $\GL_{2k+1}$
whose Levi isomorphic to $\GL_1 \times \GL_2 \times \cdots \times \GL_2$
($k$-copies of $\GL_2$) and with
$q_1, q_2$ having the same structure as elements in \eqref{thm1equ6}.
The character is given by
\begin{align}\label{thm1equ13}
\begin{split}
\psi_{U^{\epsilon}}(u)
= & \psi(z_{1,2})\cdot\psi(\sum_{i=1}^{k-1} z_{2(i-1)+2, 2(i-1)+4} +
z_{2(i-1)+3, 2(i-1)+5})\\
& \cdot \psi((\alpha+\beta) q_2(2k,2) + \alpha\beta(\alpha+\beta) q_2(2k+1,1))
\end{split}
\end{align}
for $u\in U^\epsilon$ as in \eqref{thm1equ12}.
Therefore, the integral in \eqref{thm1equ11}
is a Fourier coefficient attached to
the partition $[(2k+2)(2k)1^{2n-4k-2}]$, which
is identically zero by assumption.

Therefore, we may only consider the case $\beta = - \alpha$ mod $(F^*)^2$ in the character $\psi_{V^{\omega}}$ as expressed in \eqref{thm1equ7}. Without loss of generality, we may assume that $\beta = -\alpha$. 

Now, take $B = \frac{1}{2} \begin{pmatrix}
1 & -1\\
1 & 1
\end{pmatrix}$ and define
$$
\ol{\epsilon}: = \diag(1, B, \ldots, B, I_{2n-4k-2}, B^*, \ldots, B^*, 1).
$$
Conjugating by $\ol{\epsilon}$, the integral
in \eqref{thm1equ10} becomes:
\begin{equation}\label{thm1equ18}
\int_{[U^{\ol{\epsilon}}]}
\varphi(u \ol{\epsilon}\iota(\gamma) \omega g) \psi_{U^{\ol{\epsilon}}}^{-1}(u)
du,
\end{equation}
where $U^{\ol{\epsilon}}:= \ol{\epsilon} U \ol{\epsilon}^{-1}$, and
for $u \in U^{\ol{\epsilon}}$, $\psi_{U^{\ol{\epsilon}}}(u) = \psi_U(\ol{\epsilon}^{-1} u \ol{\epsilon})$.
Elements $u \in U^{\ol{\epsilon}}$ have the following form:
\begin{equation}\label{thm1equ19}
\begin{pmatrix}
z & q_1 & q_2\\
0 & I_{2n-4k-2} & q_1^*\\
0 & 0 & z^*
\end{pmatrix},
\end{equation}
with the same structure as elements in \eqref{thm1equ12}.
The character is given by
\begin{align}\label{thm1equ20}
\begin{split}
\psi_{U^{\ol{\epsilon}}}(u)
= & \psi(z_{1,3})\cdot\psi(\sum_{i=1}^{k-1} z_{2(i-1)+2, 2(i-1)+4} +
z_{2(i-1)+3, 2(i-1)+5})\\
&\cdot\psi(-4\alpha q_2(2k,1)).
\end{split}
\end{align}
for $u\in U^{\ol{\epsilon}}$ as defined in \eqref{thm1equ19}.
Let $\omega'$ be a Weyl element which sends
the one-dimensional toric subgroup in \eqref{thm1equ3}
to the following one-dimensional toric subgroup:
\begin{equation}\label{thm1equ14}
\{\diag(T_4; I_{2n-4k-2}; T_5)\},
\end{equation}
where
$$T_4=\diag(t_2^{2k+1}; t_1^{2k-1}, t_1^{2k-3}, \ldots, t_1; t_2^{-1}, t_2^{-3},
\ldots, t_2^{1-2k}),$$
and
$$T_5=\diag(t_2^{2k-1}, t_2^{2k-3}, \ldots, t_2; t_1^{-1}, t_1^{-3},
\ldots, t_1^{1-2k}; t_2^{-2k-1}).$$
Conjugating by $\omega'$, the integral in \eqref{thm1equ18}
becomes:
\begin{equation}\label{thm1equ15}
\int_{[U^{\ol{\epsilon}, \omega'}]}
\varphi(u \omega' \ol{\epsilon} \iota(\gamma) \omega g) \psi_{U^{\ol{\epsilon}, \omega'}}^{-1}(u)
du,
\end{equation}
where $U^{\ol{\epsilon}, \omega'}:= \omega' U^{\ol{\epsilon}} {\omega'}^{-1}$, and
for $u \in U^{\ol{\epsilon}, \omega'}$,
$\psi_{U^{\ol{\epsilon}, \omega'}}(u) = \psi_{U^{\ol{\epsilon}}}({\omega'}^{-1} u \omega')$.
Elements $u \in U^{\ol{\epsilon}, \omega'}$ have the following form:
\begin{equation}\label{thm1equ16}
\begin{pmatrix}
z & q_1 & q_2\\
0 & I_{2n-4k-2} & q_1^*\\
0 & 0 & z^*
\end{pmatrix}
\begin{pmatrix}
I_{2k+1} & 0 & 0\\
p_1 & I_{2n-4k-2} & 0\\
p_2 & p_1^* & I_{2k+1}
\end{pmatrix},
\end{equation}
where
$z\in N_{2k+1}$, the maximal upper-triangular unipotent subgroup of $\GL_{2k+1}$;
$q_1 \in \Mat_{(2k+1) \times (2n-4k-2)}$  with
$q_1(i,j)=0$ for any $i \geq k+2$ and for any
$i=k+1$ and $1 \leq j \leq n-2k-1$;
$q_2 \in \Mat'_{(2k+1) \times (2k+1)}$
with $q_2(i,j)=0$ for $i > j$;
$p_1 \in \Mat_{(2n-4k-2) \times (2k+1)}$
with $p_1(i,j)=0$ for any $j \leq k+1$
and for any $i \geq n-2k$ and $j=k+2$; and
$p_2 \in \Mat'_{(2k+1) \times (2k+1)}$
with $p_2(i,j)=0$ for $i \geq \max(j-1,1)$.
The character is given by
\begin{align}\label{thm1equ17}
\begin{split}
\psi_{U^{\ol{\epsilon}, \omega'}}(u)
= \psi(\sum_{i=1}^{k-1} z_{i,i+1}-4\alpha z_{k,k+1}-\sum_{i=k+1}^{2k} z_{i,i+1}),
\end{split}
\end{align}
for $u$ as in \eqref{thm1equ16}.

It is easy to see that the integral in \eqref{thm1equ15} is non-vanishing
if and only if the following integral is non-vanishing:
\begin{equation}\label{thm1equ21}
\int_{[U^{\ol{\epsilon}, \omega'}]}
\varphi(u \omega' \ol{\epsilon} \iota(\gamma) \omega g) \psi_{U^{\ol{\epsilon}, \omega'}}^{-1}(u)
du,
\end{equation}
with
\begin{align}\label{thm1equ22}
\begin{split}
\psi_{U^{\ol{\epsilon}, \omega'}}(u)
= \psi(\sum_{i=1}^{2k} z_{i,i+1}),
\end{split}
\end{align}
for $u$ as in \eqref{thm1equ16}.

We claim that the integral in \eqref{thm1equ21} is nonzero if and only if
$\varphi$ has a nonzero Fourier coefficient attached to
$[(2k+1)^2 1^{2n-4k-2}]$.

Indeed, if $\varphi$ has a non-zero Fourier coefficient attached to the partition
$[(2k+1)^2 1^{2n-4k-2}]$, then
the following integral is nonzero:
\begin{equation}\label{thm1equ25}
\int_{[V_{[(2k+1)^2 1^{2n-4k-2}],2}]}
\varphi(vg) \psi_{[(2k+1)^2 1^{2n-4k-2}]}^{-1}(v) dv,
\end{equation}
where elements in $[V_{[(2k+1)^2 1^{2n-4k-2}],2}]$ have
the following form
\begin{equation}\label{thm1equ23}
\begin{pmatrix}
z & q_1 & q_2\\
0 & I_{2n-4k-2} & q_1^*\\
0 & 0 & z^*
\end{pmatrix}
\begin{pmatrix}
I_{2k+1} & 0 & 0\\
p_1 & I_{2n-4k-2} & 0\\
p_2 & p_1^* & I_{2k+1}
\end{pmatrix},
\end{equation}
where in the left matrix, $z\in N_{2k+1}$;
$q_1 \in \Mat_{(2k+1) \times (2n-4k-2)}$ with
$q_1(i,j)=0$ for any $i \geq k+1$; and
$q_2 \in \Mat'_{(2k+1) \times (2k+1)}$
with $q_2(i,j)=0$ for $i \geq j$, while in the right matrix,
$p_1 \in \Mat_{(2n-4k-2) \times (2k+1)}$
with $p_1(i,j)=0$ for any $j \leq k+1$; and
$p_2 \in \Mat'_{(2k+1) \times (2k+1)}$
with $p_2(i,j)=0$, for $i \geq j$.
The character is given by
\begin{align}\label{thm1equ24}
\begin{split}
\psi_{[(2k+1)^2 1^{2n-4k-2}]}(v)
= \psi(\sum_{i=1}^{2k} z_{i,i+1}),
\end{split}
\end{align}
for $v$ as in \eqref{thm1equ23}.

Define $R=\prod_{i=n+1}^{2n-2k-1} X_{\alpha_i}$, where
$X_{\alpha_i}$ is the root subgroup corresponding to the
root $\alpha_i = e_{k+1}+e_i$.
Let $C=\prod_{i=n+1}^{n-2k-1} X_{\beta_i}$,
where $X_{\beta_i}$ is the root subgroup corresponding to the
root $\beta_i = -e_i-e_{k+2}$.
Let $\gamma_i= e_i+e_{2k+2-i}$, $\eta_i = -e_{2k+2-i}-e_{i+1}$,
for $i=1,2,\ldots,k$, and $x_{\gamma_i}$, $X_{\eta_i}$ are
corresponding root subgroups.
Write $V_{[(2k+1)^2 1^{2n-4k-2}],2}=C \prod_{i=1}^k X_{\eta_i} \wt{V}$,
with $C \prod_{i=1}^k X_{\eta_i} \cap \wt{V} = \{1\}$ and define $\psi_{\wt{V}} = \psi_{[(2k+1)^2 1^{2n-4k-2}]}|_{\wt{V}}$.

Consider the quadruple $(\prod_{i=1}^k X_{\eta_i} \wt{V}, \psi_{\wt{V}}, R, C)$.
It is easy to see that it satisfies the conditions for Lemma
\ref{nvequ} and hence
the integral in \eqref{thm1equ25} is non-vanishing if and only if
the following integral is non-vanishing
\begin{equation}\label{thm1equ26}
\int_{[R\prod_{i=1}^k X_{\eta_i} \wt{V}]}
\varphi(xvg) \psi_{\wt{V}}^{-1}(v) dxdv.
\end{equation}

Then consider the following sequence of quadruples
\begin{align}\label{thm1equ27}
\begin{split}
& (\prod_{i=2}^k X_{\eta_i} R \wt{V}, \psi_{\wt{V}}, X_{\gamma_1}, X_{\eta_1}),\\
& (X_{\gamma_1}\prod_{i=3}^k X_{\eta_i} R \wt{V}, \psi_{\wt{V}}, X_{\gamma_2}, X_{\eta_2}),\\
& \cdots \\
& (\prod_{i=1}^{k-1} X_{\gamma_i} R \wt{V}, \psi_{\wt{V}}, X_{\gamma_k}, X_{\eta_k}).
\end{split}
\end{align}
By applying Lemma \ref{nvequ} repeatedly, the integral
in \eqref{thm1equ26} is non-vanishing if and only if the
following integral is non-vanishing
\begin{equation}\label{thm1equ28}
\int_{[\prod_{i=1}^k X_{\gamma_i} R \wt{V}]}
\varphi(xvg) \psi_{\wt{V}}^{-1}(v) dxdv.
\end{equation}

Since the root subgroup
$X_{e_{k+1} + e_{k+1}}$ normalizes the group
$\prod_{i=1}^k X_{\gamma_i} R \wt{V}$,
we can take the Fourier expansion of the integral
in \eqref{thm1equ28} along the subgroup $X_{e_{k+1} + e_{k+1}}$.
We will get two orbits. Any Fourier coefficient corresponding to
the non-trivial orbit contains an inner integral which is
a Fourier coefficient attached to the partition
$[(2k+2)1^{2n-2k-2}]$, and is identically zero by assumption.
Therefore, the integral
in \eqref{thm1equ28} becomes
 \begin{equation}\label{thm1equ29}
\int_{[X_{e_{k+1} + e_{k+1}}\prod_{i=1}^k X_{\gamma_i} R \wt{V}]}
\varphi(xvg) \psi_{\wt{V}}^{-1}(v) dxdv.
\end{equation}
It is easy to see that the integral in \eqref{thm1equ29} is exactly the integral
in \eqref{thm1equ21}. This proves the claim and completes the proof of the proposition.
\end{proof}

For the partition $[(2k+1)^21^{2n-4k-2}]$, the Levi part of the stabilizer of the
character $\psi_{[(2k+1)^21^{2n-4k-2}]}$ in
is isomorphic to
$G_1(\BA) \times G_{n-2k-1}(\BA)$.
It follows that for any automorphic form $\varphi$, the Fourier coefficient
$\varphi^{\psi_{[(2k+1)^21^{2n-4k-2}]}}$ is automorphic
on $G_1(\BA) \times G_{n-2k-1}(\BA)$, and in particular is automorphic on
$G_{n-2k-1}(\BA)$ by restriction to $G_{n-2k-1}(\BA)$.
Hence, we may define Fourier coefficients attached to the composite partition
$[(2k+1)^21^{2n-4k-2}]\circ[p_1^{e_1}p_2^{e_2}\cdots p_r^{e_r}]$
for any symplectic partition $[p_1^{e_1}p_2^{e_2}\cdots p_r^{e_r}]$ of $2n-4k-2$ as in Section 1 of \cite{GRS03}.
Note that $G_{n-2k-1}(\BA)$ is embedded into $G_n(\BA)$ via the map
$g \mapsto \diag(I_{2k+1}, g, I_{2k+1})$.
The following proposition is an analogue of Lemma 2.6 of \cite{GRS03}.

\begin{prop}\label{prop2}
Assume that $\pi$ is an irreducible automorphic representation of
$G_n(\BA)$ realized in the space of automorphic forms, and $\ul{p}=[(2k+1)^2p_1^{e_1}p_2^{e_2}\cdots p_r^{e_r}]$ is a symplectic partition of $2n$ with
$2k+1 \geq p_1 \geq p_2 \geq \cdots \geq p_r$;
$e_i=1$ if $p_i$ is even; and $e_i=2$ if $p_i$ is odd.
Then $\pi$ has a non-vanishing $\psi_{\ul{p}, \ul{a}}$-Fourier coefficient attached to $\ul{p}$ if and only if it has a non-vanishing $\psi_{[(2k+1)^21^{2n-4k-2}]} \cdot \psi_{[p_1^{e_1}p_2^{e_2}\cdots p_r^{e_r}], \ul{a}}$-
Fourier coefficient attached to
the composite partition $[(2k+1)^21^{2n-4k-2}]\circ[p_1^{e_1}p_2^{e_2}\cdots p_r^{e_r}]$, where $\ul{a}= \{a_i\in F^*/(F^*)^2\ |\ e_i =1, 2 \leq i \leq r\}$.
\end{prop}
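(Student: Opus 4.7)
The plan is to closely mirror the proof strategy of Lemma \ref{lem1}, adapting the combinatorics to the case where the leading block is a pair of odd parts $(2k+1)^2$ rather than a single even part $(2n_1)$. Starting from an automorphic form $\varphi \in \pi$, I would first invoke Corollary \ref{halfheisen} to replace the Fourier coefficient $\int_{[V_{\underline{p},2}]} \varphi(v g) \psi_{\underline{p},\underline{a}}^{-1}(v)\, dv$ by the biconditionally equivalent integral $\int_{[Y V_{\underline{p},2}]} \varphi(v y g)\, \psi_{\underline{p},\underline{a}}^{-1}(v)\, dv\, dy$, where $Y$ is the polarization subgroup of \eqref{equy} attached to $\underline{p}$.

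Next, I would choose a Weyl element $\omega_2 = \diag(I_{2k+1}, \omega_1, I_{2k+1})$, where $\omega_1$ straightens the toric subgroup $\mathcal{H}_{[p_1^{e_1}\cdots p_r^{e_r}]}$ into a strictly decreasing diagonal form, and then further precompose with a Weyl element that relocates the block corresponding to the pair $(2k+1)^2$ to the outer $\GL_{2k+1}$-window. After conjugation, elements of the resulting unipotent subgroup $W$ have the block Siegel form
\[
w = \begin{pmatrix} z & q_1 & q_2 \\ 0 & v' & q_1^* \\ 0 & 0 & z^* \end{pmatrix},
\]
with $z$ an upper-triangular unipotent matrix in $\GL_{2k+1}$ (subject to some vanishing constraints reflecting the original toric weights), $q_1, q_2$ subject to constraints inherited from $V_{\underline{p},2}$, and $v'$ running over $\omega_1 \widetilde{Y} V_{[p_1^{e_1}\cdots p_r^{e_r}],2} \omega_1^{-1}$. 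The character decomposes as the product of $\psi(\sum_{i=1}^{2k} z_{i,i+1})$ (the simple-root character for the $\GL_{2k+1}$ piece) and $\psi_{[p_1^{e_1}\cdots p_r^{e_r}],\underline{a}}(\omega_1^{-1} v' \omega_1)$, together with a possible nondegenerate linear form on $q_2$ coming from the pair $(2k+1)^2$.

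The technical heart of the proof is the iterative root-swap, exactly as in Lemma \ref{lem1}: for each row $i$ in which $q_1$ has zero entries, one lists the corresponding roots $\alpha_{i,1},\ldots,\alpha_{i,s_i}$ along with dual roots $\beta_{i,1},\ldots,\beta_{i,s_i}$ satisfying $\alpha_{i,j}+\beta_{i,j} = e_i - e_{i+1}$ (the relevant simple root for the GL-part of the character). Proceeding row by row and within each row from $j=1$ to $j=s_i$, I would apply Lemma \ref{nvequ} to the quadruple $(C_{i,j}, \psi_{\widetilde{W}}, X_{\alpha_{i,j}}, X_{\beta_{i,j}})$ with $C_{i,j}$ the appropriate remaining part of $W$. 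Each application is biconditional and exchanges an $X_{\beta_{i,j}}$-integration for an $X_{\alpha_{i,j}}$-integration, filling in the missing entries of the GL-block one at a time. After all swaps, the inner Sp-integration for the subpartition $[p_1^{e_1}\cdots p_r^{e_r}]$ is untouched, while the outer integration becomes exactly the $\psi_{[(2k+1)^2 1^{2n-4k-2}]}$-Fourier integral, which by the definition recalled from \cite[Section 1]{GRS03} is the desired composite Fourier coefficient attached to $[(2k+1)^2 1^{2n-4k-2}] \circ [p_1^{e_1}\cdots p_r^{e_r}]$, with $G_{n-2k-1}$ embedded via $g \mapsto \diag(I_{2k+1}, g, I_{2k+1})$.

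The main obstacle will be verifying the hypotheses of Lemma \ref{nvequ} at each step: one must check that the successive groups $C_{i,j}$ are normalized by $X_{\alpha_{i,j}}$ and $X_{\beta_{i,j}}$, that the pairing $(x,y) \mapsto \psi_{\widetilde{W}}([x,y])$ is non-degenerate, and that all commutators land inside $C_{i,j}$. This is essentially identical bookkeeping to the argument of Lemma \ref{lem1}, but requires the extra care that, for the odd-pair block $(2k+1)^2$, the initial constraints on $q_1$ and the dual constraints on $q_2$ line up symmetrically with respect to the diagonal embedding of $G_{n-2k-1}$; this is guaranteed by the symmetry of the matrices $X_{\underline{p},\underline{a}}$, $Y_{\underline{p},\underline{a}}$ built from a standard $\mathfrak{sl}_2$-triple. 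Once these verifications are in place, the biconditional chain completes the proof.
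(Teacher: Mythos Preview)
Your proposal has a genuine gap: it only accounts for the root swaps in the $q_1/p_1$ blocks (those with $\alpha_{i,j}+\beta_{i,j}=e_i-e_{i+1}$, exactly as in Lemma~\ref{lem1}), but the odd-pair case $(2k+1)^2$ requires an \emph{additional} family of swaps in the anti-diagonal blocks $q_2/p_2$ that has no analogue in Lemma~\ref{lem1}. Concretely, the Fourier coefficient attached to $[(2k+1)^21^{2n-4k-2}]$ has $q_2(i,j)=p_2(i,j)=0$ for $i\geq j$ in $\Mat'_{(2k+1)\times(2k+1)}$, whereas the Fourier coefficient attached to the full partition (after the same Weyl conjugation) imposes a different vanishing pattern on these blocks. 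The paper bridges this discrepancy by introducing, for $1\leq i\leq k$, the pairs $R_i^2=\prod_j X_{\alpha^i_j}$ and $C_i^2=\prod_j X_{\beta^i_j}$ with $\alpha^i_j=e_i+e_{2k+1-i+j}$ and $\beta^i_j=-e_{2k+1-i+j}-e_{i+1}$, and applies Lemma~\ref{nvequ} to these in addition to the $q_1/p_1$ swaps. Without this second family, your chain of biconditional equivalences does not reach the composite Fourier coefficient.

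A related minor misconception: you anticipate ``a possible nondegenerate linear form on $q_2$ coming from the pair $(2k+1)^2$,'' but for an odd pair there is no such term---the character is supported entirely on the $z$-block, $\psi(\sum_{i=1}^{2k}z_{i,i+1})$. This is precisely why the $q_2/p_2$ constraints are not fixed by the character and must instead be adjusted by the extra root swaps. The paper's actual strategy is to exhibit a common intermediate integral and show, via two separate sequences of Lemma~\ref{nvequ} applications (one using only the $R_i^2,C_i^2$ pairs, the other using both $R_i^2,C_i^2$ and the $R_i^1,C_i^1$ pairs you describe), that each of the two Fourier coefficients is non-vanishing if and only if this intermediate integral is.
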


\begin{proof}
We are going to show that for any $\varphi \in \pi$,
both Fourier coefficients attached to
partitions $[(2k+1)^21^{2n-4k-2}]\circ[p_1^{e_1}p_2^{e_2}\cdots p_r^{e_r}]$
and $[(2k+1)^2p_1^{e_1}p_2^{e_2}\cdots p_r^{e_r}]$ share the same non-vanishing
property with the following kind of integrals
\begin{equation}\label{thm2equ1}
\int_{[U]} \varphi(ug) \psi_U^{-1}(u) du,
\end{equation}
where elements in $U$ have the following form
\begin{equation}\label{thm2equ2}
\begin{pmatrix}
z & q_1 & q_2\\
0 & v' & q_1^*\\
0 & 0 & z^*
\end{pmatrix}
\begin{pmatrix}
I_{2k+1} & 0 & 0\\
p_1 & I_{2n-4k-2} & 0\\
p_2 & p_1^* & I_{2k+1}
\end{pmatrix},
\end{equation}
where $z\in N_{2k+1}$; $q_1 \in \Mat_{(2k+1) \times (2n-4k-2)}$
with $q_1(i, j)=0$ for $k+1 \leq i \leq 2k+1$ and $1 \leq j \leq 2n-4k-2$; $p_1 \in \Mat_{(2n-4k-2) \times (2k+1)}$
with $p_1(i, j)=0$ for $1 \leq i \leq 2n-4k-2$ and $1 \leq j \leq k+1$; and
$p_2, q_2 \in \Mat'_{(2k+1) \times (2k+1)}$ with
$q_2(i,j)=0$ for $k+1 \leq i \leq 2k+1$ and $1 \leq j \leq k+1$, $p_2 \neq 0$ for $1 \leq i \leq k$ and $k+2 \leq j \leq 2k+1$. Recall that
$\Mat'_{(2k+1) \times (2k+1)}$ consists of matrices
in $\Mat_{(2k+1) \times (2k+1)}$ satisfying
the property that $q^t v_{2k+1} - v_{2k+1} q =0$, where
$v_{2k+1}$ is a matrix only with ones on the second diagonal.
Also we have that
$v' \in \omega_1 \wt{Y} V_{[p_1^{e_1}p_2^{e_2}\cdots p_r^{e_r}],2} \omega_1^{-1}$,
where $\wt{Y}$ is defined in \eqref{equy}
corresponding to the partition $[p_1^{e_1}p_2^{e_2}\cdots p_r^{e_r}]$,
and $\omega_1$ is any Weyl element sending the one
dimensional toric subgroup $\CH_{[p_1^{e_1}p_2^{e_2}\cdots p_r^{e_r}]}$
attached to the partition $[p_1^{e_1}p_2^{e_2}\cdots p_r^{e_r}]$
(see \eqref{equtorus})
to the toric subgroup:
$\{\diag(t^{p_1-1}, \ldots, t^{1-p_1})\},$
with the exponents of $t$ being of non-increasing order.
The character is given by
$$
\psi_U(\diag(z, v', z^*))
=  \psi(\sum_{i=1}^{2k} z_{i,i+1})\cdot
\psi_{[p_1^{e_1}p_2^{e_2}\cdots p_r^{e_r}],\ul{a}}(\omega_1^{-1} v' \omega_1).
$$

Without loss of generality, we assume that $[p_1^{e_1}p_2^{e_2}\cdots p_r^{e_r}] \neq [1^{2n-4k-2}]$.

We fix any automorphic form $\varphi \in \pi$ and start from its $\psi_{[(2k+1)^21^{2n-4k-2}]} \cdot \psi_{[p_1^{e_1}p_2^{e_2}\cdots p_r^{e_r}], \ul{a}}$-Fourier coefficients attached to the composite
partition $[(2k+1)^21^{2n-4k-2}]\circ[p_1^{e_1}p_2^{e_2}\cdots p_r^{e_r}]$.
After adding $\wt{Y}$ using Corollary \ref{halfheisen} and
conjugating by the Weyl element
$\diag(I_{2k+1}, \omega_1, I_{2k+1})$,
a Fourier coefficient of $\varphi$ attached to
the partition $[(2k+1)^21^{2n-4k-2}]\circ[p_1^{e_1}p_2^{e_2}\cdots p_r^{e_r}]$
can be taken to be following form
\begin{equation}\label{thm2equ4}
\int_{[V]} \varphi(vg) \psi_V^{-1}(v) dv,
\end{equation}
where elements in $V$ have the following form
\begin{equation}\label{thm2equ5}
\begin{pmatrix}
z & q_1 & q_2\\
0 & v' & q_1^*\\
0 & 0 & z^*
\end{pmatrix}
\begin{pmatrix}
I_{2k+1} & 0 & 0\\
p_1 & I_{2n-4k-2} & 0\\
p_2 & p_1^* & I_{2k+1}
\end{pmatrix},
\end{equation}
where $z\in N_{2k+1}$; $q_1 \in \Mat_{(2k+1) \times (2n-4k-2)}$
with $q_1(i, j)=0$ for any $i \geq k+1$ and $1 \leq j \leq 2n-4k-2$; and
$p_2, q_2 \in \Mat'_{(2k+1) \times (2k+1)}$ with
$p_2(i,j)=q_2(i,j)=0$, for any $i \geq j$. Recall that
$\Mat'_{(2k+1) \times (2k+1)}$ consists of matrices
in $\Mat_{(2k+1) \times (2k+1)}$ satisfying
the property that $q^t v_{2k+1} - v_{2k+1} q =0$, with
$v_{2k+1}$ being a matrix only with ones on the second diagonal. Also
$p_1 \in \Mat_{(2n-4k-2)\times (2k+1)}$
with $p_1(i, j)=0$ for any $1 \leq i \leq 2n-4k-2$ and $1 \leq j \leq k+1$; and
$v' \in \omega_1 \wt{Y} V_{[p_1^{e_1}p_2^{e_2}\cdots p_r^{e_r}],2} \omega_1^{-1}$.
The character is given by
$$
\psi_V(\diag(z, v', z^*))
=  \psi(\sum_{i=1}^{2k} z_{i,i+1})
\psi_{[p_1^{e_1}p_2^{e_2}\cdots p_r^{e_r}],\ul{a}}(\omega_1^{-1} v' \omega_1).
$$

For $1 \leq i \leq k$, we define $R^2_i = \prod_{j=1}^i X_{\alpha^i_j}$,
with $\alpha^i_j = e_i+e_{2k+1-i+j}$,
and $C^2_i = \prod_{j=1}^i X_{\beta^i_j}$,
with $\beta^i_j = -e_{2k+1-i+j}-e_{i+1}$.
Then define that $V = \prod_{i=1}^{k} C_i^2 \wt{V}$
with $\prod_{i=1}^{k} C_i^2 \cap \wt{V} = \{1\}$ and
$\psi_{\wt{V}} = \psi_V|_{\wt{V}}$.

We will apply Lemma \ref{nvequ} repeatedly to the integration over
$\prod_{i=1}^{k} C_i^2.$
To do this, we need to consider pairs of groups ordered as following:
$$
(R_1^2, C_1^2), (R_2^2, C_2^2), \ldots, (R_k^2, C_k^2).
$$

For the pair $(R_1^2, C_1^2)$, we consider the following sequence of quadruples
\begin{align*}
& (\prod_{i=2}^{k} C_i^2 \wt{V}, \psi_{\wt{V}}, X_{\alpha_1^1}, X_{\beta_1^1}).
\end{align*}
By Lemma \ref{nvequ}, the integral in
\eqref{thm2equ4} is non-vanishing if and only if the following integral is non-vanishing
\begin{equation}\label{thm2equ12}
\int_{[R_1^2\prod_{i=2}^{k} C_i^2 \wt{V}]} \varphi(rvg) \psi_{\wt{V}}^{-1}(v) dr dv.
\end{equation}

For the pair $(R_s^2, C_s^2)$, $s=2,3,\ldots, k$, we consider the following sequence of quadruples
\begin{align*}
& (\prod_{t=2}^s X_{\beta_t^s} \prod_{j=1}^{s-1} R_j^2 \prod_{i=s+1}^{k} C_i^2 \wt{V}, \psi_{\wt{V}}, X_{\alpha_1^s}, X_{\beta_1^s}),\\
& (\prod_{t=3}^s X_{\beta_t^s} X_{\alpha_1^s} \prod_{j=1}^{s-1} R_j^2 \prod_{i=s+1}^{k} C_i^2 \wt{V}, \psi_{\wt{V}}, X_{\alpha_2^s}, X_{\beta_2^s}),\\
& \cdots,\\
& (\prod_{t=1}^{s-1} X_{\alpha_t^s} \prod_{j=1}^{s-1} R_j^2 \prod_{i=s+1}^{k} C_i^2 \wt{V}, \psi_{\wt{V}}, X_{\alpha_s^s}, X_{\beta_s^s}).
\end{align*}
After considering all the pairs $(R_s^2, C_s^2)$, $s=2,3,\ldots, k$, and applying Lemma \ref{nvequ} repeatedly to all the quadruples above, the integral in
\eqref{thm2equ12} is non-vanishing if and only if the following integral is non-vanishing
\begin{equation}\label{thm2equ13}
\int_{[\prod_{j=1}^{k} R_j^2 \wt{V}]} \varphi(rvg) \psi_{\wt{V}}^{-1}(v) dr dv,
\end{equation}
which is exactly of the kind of integrals in \eqref{thm2equ1}.

Next, we start with $\psi_{\ul{p}, \ul{a}}$-Fourier coefficients attached to the symplectic partition $[(2k+1)^2p_1^{e_1}p_2^{e_2}\cdots p_r^{e_r}]$.
By Lemma \ref{halfheisen},
we add $Y$, as defined in \eqref{equy} corresponding to
the partition $[(2k+1)^2p_1^{e_1}p_2^{e_2}\cdots p_r^{e_r}]$,
to the integration domain of the Fourier coefficients,
without changing the non-vanishing property.
Then, after conjugating by the Weyl element
$\diag(I_{2k+1}, \omega_1, I_{2k+1})$,
we see that a Fourier coefficient of $\varphi$ attached to
the partition $[(2k+1)^2p_1^{e_1}p_2^{e_2}\cdots p_r^{e_r}]$
is non-vanishing if and only if the following
integral is non-vanishing
\begin{equation}\label{thm2equ7}
\int_{[W]} \varphi(wg) \psi_W^{-1}(w) dw,
\end{equation}
where elements in $W$ have the following form
\begin{equation}\label{thm2equ8}
\begin{pmatrix}
z & q_1 & q_2\\
0 & v' & q_1^*\\
0 & 0 & z^*
\end{pmatrix}
\begin{pmatrix}
I_{2k+1} & 0 & 0\\
p_1 & I_{2n-4k-2} & 0\\
p_2 & p_1^* & I_{2k+1}
\end{pmatrix},
\end{equation}
where $z\in N_{2k+1}$, and also $q_1 \in \Mat_{(2k+1) \times (2n-4k-2)}$
with certain conditions;
$p_2, q_2 \in \Mat'_{(2k+1) \times (2k+1)}$ with
$p_2(i,j)=q_2(i,j)=0$ for any $i \geq j$;
$p_1 \in \Mat_{(2n-4k-2)\times (2k+1)}$
with certain conditions; and finally
$v' \in \omega_1 \wt{Y}V_{[p_1^{e_1}p_2^{e_2}\cdots p_r^{e_r}],2} \omega_1^{-1}$.
The character is given by
$$
\psi_W(\diag(z, v', z^*))
=  \psi(\sum_{i=1}^{2k} z_{i,i+1})\cdot
\psi_{[p_1^{e_1}p_2^{e_2}\cdots p_r^{e_r}],\ul{a}}(\omega_1^{-1} v' \omega_1).
$$

Now, we specify the conditions on $q_1$ and $p_1$ above.
For $q_1$, the last row is zero, assume that $1 \leq i_0 \leq k$ is the
first row that $q_1$ has zero entries. For $i_0 \leq i \leq k$,
assume that $\alpha_{i,1}, \alpha_{i,2}, \ldots, \alpha_{i,s_i}$ are
all the roots such that $q_1$ has zero entries at corresponding places
from right to the left with $s_i \in \BZ_{>0}$. Assume that $k+1 \leq i_{00} \leq 2k$ is the last row that $q_1$ has nonzero entries. For $k+1 \leq i \leq i_{00}$, assume that $\alpha_{i,1}, \alpha_{i,2}, \ldots, \alpha_{i,s_i}$ are
all the roots such that $q_1$ has nonzero entries at corresponding places
from left to the right with $s_i \in \BZ_{>0}$.
Then
\begin{itemize}
\item $i_0+1$ is the first column that $p_1$ has
non-zero entries;
\item for $i_0 \leq i \leq k$, in the column $i+1$,
there are exactly $s_i$ roots $\beta_{i,j}$, $1 \leq j \leq s_i$,
such that $p_1$ has non-zero entries at corresponding places,
from bottom to top; and
\item for $1 \leq j \leq s_i$, $\alpha_{i,j} + \beta_{i,j} = e_i - e_{i+1}$;
\item $i_{00}+1$ is the last column that $p_1$ has zero entries;
\item for $k+1 \leq i \leq i_{00}$, in the column $i+1$,
there are exactly $t_i$ roots $\beta_{i,j}$, $1 \leq j \leq s_i$,
such that $p_1$ has zero entries at corresponding places,
from top to bottom; and
\item for $1 \leq j \leq s_i$, $\alpha_{i,j} + \beta_{i,j} = e_i - e_{i+1}$.
\end{itemize}
For $i_0 \leq i \leq i_{00}$, define
$R_i^1 = \prod_{j=1}^{s_i} X_{\alpha_{i,j}}$, $C_i^1 = \prod_{j=1}^{s_i} X_{\beta_{i,j}}$.

As above, for $1 \leq i \leq k$, we define $R^2_i = \prod_{j=1}^i X_{\alpha^i_j}$,
with $\alpha^i_j = e_i+e_{2k+1-i+j}$,
and $C^2_i = \prod_{j=1}^i X_{\beta^i_j}$,
with $\beta^i_j = -e_{2k+1-i+j}-e_{i+1}$.

Write $W=\prod_{i=1}^{k} C_i^2 \prod_{i=i_0}^{k} C_i^1 \prod_{i=k+1}^{i_{00}} R_i^1 \wt{W}$,
with
$$\prod_{i=1}^{k} C_i^2 \prod_{i=i_0}^{k} C_i^1\prod_{i=k+1}^{i_{00}} R_i^1 \cap \wt{W}=\{1\},$$ and
write $\psi_{\wt{W}} = \psi_{W}|_{\wt{W}}$.

We are ready to apply Lemma \ref{nvequ} to the integration over
$$\prod_{i=1}^{k} C_i^2 \prod_{i=i_0}^{k} C_i^1 \prod_{i=k+1}^{i_{00}} R_i^1.$$
For this, we need to consider pairs of groups ordered as following
\begin{align*}
&(R_1^2, C_1^2), (R_2^2, C_2^2), \ldots, (R_{i_0-1}^2, C_{i_0-1}^2);\\
& (R_{i_0}^2 C_{i_0}^1, C_{i_0}^2 C_{i_0}^1), (R_{i_0+1}^2 C_{i_0+1}^1, C_{i_0+1}^2 C_{i_0+1}^1), \ldots, (R_{k}^2 C_{k}^1, C_{k}^2 C_{k}^1);\\
&(C_{i_{00}}^1, R_{i_{00}}^1),(C_{i_{00}-1}^1, R_{i_{00}-1}^1), \cdots (C_{k+1}^1, R_{k+1}^1).
\end{align*}

First, for the pair $(R_1^2, C_1^2)$, we consider the following quadruple
$$(\prod_{i=2}^{k} C_i^2 \prod_{i=i_0}^{k} C_i^1 \prod_{i=k+1}^{i_{00}} R_i^1 \wt{W}, \psi_{\wt{W}},
X_{\alpha_1^1}, X_{\beta_1^1}).$$
By Lemma \ref{nvequ}, we can see that the integral in
\eqref{thm2equ7} is non-vanishing if and only if the following integral is
non-vanishing
\begin{equation}\label{thm2equ14}
\int_{[R_1^2 \prod_{i=2}^{k} C_i^2 \prod_{i=i_0}^{k} C_i^1 \prod_{i=k+1}^{i_{00}} R_i^1 \wt{W}]} \varphi(xwg) \psi_{\wt{W}}^{-1}(w) dxdw.
\end{equation}

For the pair $(R_t^2, C_t^2)$, $t=2, 3, \ldots, i_0-1$, we consider the following sequence of quadruples
\begin{align*}
&(\prod_{j=2}^t X_{\beta_j^t} \prod_{i=1}^{t-1} R_i^2 \prod_{i=t+1}^{k} C_i^2 \prod_{i=i_0}^{k} C_i^1 \prod_{i=k+1}^{i_{00}} R_i^1 \wt{W}, \psi_{\wt{W}},
X_{\alpha_1^t}, X_{\beta_1^t}),\\
&(X_{\alpha_1^t}\prod_{j=3}^t X_{\beta_j^t} \prod_{i=1}^{t-1} R_i^2 \prod_{i=t+1}^{k} C_i^2 \prod_{i=i_0}^{k} C_i^1 \prod_{i=k+1}^{i_{00}} R_i^1 \wt{W}, \psi_{\wt{W}},
X_{\alpha_2^t}, X_{\beta_2^t}),\\
& \cdots \\
& (\prod_{j=1}^{t-1} X_{\alpha_j^t} \prod_{i=1}^{t-1} R_i^2 \prod_{i=t+1}^{k} C_i^2 \prod_{i=i_0}^{k} C_i^1 \prod_{i=k+1}^{i_{00}} R_i^1 \wt{W}, \psi_{\wt{W}},
X_{\alpha_t^t}, X_{\beta_t^t}).
\end{align*}
After considering all the pairs of groups $(R_t^2, C_t^2)$, $t=2, 3, \ldots, i_0-1$, and applying Lemma \ref{nvequ} repeatedly to all the above quadruples, we can see that the integral in
\eqref{thm2equ14} is non-vanishing if and only if the following integral is
non-vanishing
\begin{equation}\label{thm2equ15}
\int_{[\prod_{i=1}^{i_0-1} R_i^2 \prod_{i=i_0}^{k} C_i^2 C_i^1 \prod_{i=k+1}^{i_{00}} R_i^1 \wt{W}]} \varphi(xwg) \psi_{\wt{W}}^{-1}(w) dxdw.
\end{equation}

For the pair $(R_{t}^2 C_{t}^1, C_{t}^2 C_{t}^1)$, $t=i_0, i_0+1, \ldots, k$,
we consider the following sequence of quadruples
\begin{align*}
&(\prod_{j=2}^{t} X_{\beta_j^t} \prod_{i=1}^{t-1} R_i^2
\prod_{i=i_0}^{t-1} R_i^1
C_t^1 \prod_{i=t+1}^{k} C_i^2C_i^1  \prod_{i=k+1}^{i_{00}} R_i^1 \wt{W}, \psi_{\wt{W}},
X_{\alpha_1^t}, X_{\beta_1^t}),\\
& (X_{\alpha_1^t} \prod_{j=3}^{t} X_{\beta_j^t} \prod_{i=1}^{t-1} R_i^2
\prod_{i=i_0}^{t-1} R_i^1
C_t^1 \prod_{i=t+1}^{k} C_i^2C_i^1  \prod_{i=k+1}^{i_{00}} R_i^1 \wt{W}, \psi_{\wt{W}},
X_{\alpha_2^t}, X_{\beta_2^t}),\\
& \cdots,\\
& (\prod_{j=1}^{t-1} X_{\alpha_j^t} \prod_{i=1}^{t-1} R_i^2
\prod_{i=i_0}^{t-1} R_i^1
C_t^1 \prod_{i=t+1}^{k} C_i^2C_i^1  \prod_{i=k+1}^{i_{00}} R_i^1 \wt{W}, \psi_{\wt{W}},
X_{\alpha_t^t}, X_{\beta_t^t});\\
& (\prod_{j=2}^{s_t} X_{\beta_{t,j}} \prod_{i=1}^{t} R_i^2
\prod_{i=i_0}^{t-1} R_i^1
\prod_{i=t+1}^{k} C_i^2C_i^1  \prod_{i=k+1}^{i_{00}} R_i^1 \wt{W}, \psi_{\wt{W}},
X_{\alpha_{t,1}}, X_{\beta_{t,1}}),\\
& (X_{\alpha_{t,1}} \prod_{j=3}^{s_t} X_{\beta_{t,j}} \prod_{i=1}^{t} R_i^2
\prod_{i=i_0}^{t-1} R_i^1
\prod_{i=t+1}^{k} C_i^2C_i^1  \prod_{i=k+1}^{i_{00}} R_i^1 \wt{W}, \psi_{\wt{W}},
X_{\alpha_{t,2}}, X_{\beta_{t,2}}),\\
& \cdots,\\
& (\prod_{j=1}^{s_t-1} X_{\alpha_{t,j}} \prod_{i=1}^{t} R_i^2
\prod_{i=i_0}^{t-1} R_i^1
\prod_{i=t+1}^{k} C_i^2C_i^1  \prod_{i=k+1}^{i_{00}} R_i^1 \wt{W}, \psi_{\wt{W}},
X_{\alpha_{t,s_t}}, X_{\beta_{t,s_t}}).
\end{align*}
After considering all the pairs of groups $(R_{t}^2 C_{t}^1, C_{t}^2 C_{t}^1)$, $t=i_0, i_0+1, \ldots, k$, and applying Lemma \ref{nvequ} repeatedly to all the above quadruples, we can see that the integral in
\eqref{thm2equ15} is non-vanishing if and only if the following integral is
non-vanishing
\begin{equation}\label{thm2equ16}
\int_{[\prod_{i=1}^{k} R_i^2
\prod_{i=i_0}^{k} R_i^1
 \prod_{i=k+1}^{i_{00}} R_i^1 \wt{W}]} \varphi(xwg) \psi_{\wt{W}}^{-1}(w) dxdw.
\end{equation}

For the pair $(C_{t}^1, R_{t}^1)$, $t=i_{00}, i_{00}-1, \ldots, k+1$, we consider the following sequence of quadruples
\begin{align*}
& (\prod_{j=2}^{s_t} X_{\alpha_{t,j}}
\prod_{i=1}^{k} R_i^2
\prod_{i=i_0}^{k} R_i^1
\prod_{i=t+1}^{i_{00}} C_i^1
 \prod_{i=k+1}^{t-1} R_i^1 \wt{W},\psi_{\wt{W}}, X_{\beta_{t,1}}, X_{\alpha_{t,1}}),\\
& (X_{\beta_{t,1}} \prod_{j=3}^{s_t} X_{\alpha_{t,j}}
\prod_{i=1}^{k} R_i^2
\prod_{i=i_0}^{k} R_i^1
\prod_{i=t+1}^{i_{00}} C_i^1
 \prod_{i=k+1}^{t-1} R_i^1 \wt{W},\psi_{\wt{W}}, X_{\beta_{t,2}}, X_{\alpha_{t,2}}),\\
& \cdots,
\end{align*}
\begin{align*}
& (\prod_{j=1}^{s_t-1} X_{\beta_{t,j}}
\prod_{i=1}^{k} R_i^2
\prod_{i=i_0}^{k} R_i^1
\prod_{i=t+1}^{i_{00}} C_i^1
 \prod_{i=k+1}^{t-1} R_i^1 \wt{W},\psi_{\wt{W}}, X_{\beta_{t,s_t}}, X_{\alpha_{t,s_t}}).
\end{align*}
After considering all the pairs of groups $(C_{t}^1, R_{t}^1)$, $t=i_{00}, i_{00}-1, \ldots, k+1$, and applying Lemma \ref{nvequ} repeatedly to all the above quadruples, we can see that the integral in
\eqref{thm2equ16} is non-vanishing if and only if the following integral is
non-vanishing
\begin{equation}\label{thm2equ17}
\int_{[\prod_{i=1}^{k} R_i^2
\prod_{i=i_0}^{k} R_i^1
\prod_{i=k+1}^{i_{00}} C_i^1
\wt{W}]} \varphi(xwg) \psi_{\wt{W}}^{-1}(w) dxdw,
\end{equation}
which is exactly of the kind of integrals in \eqref{thm2equ1}.
This completes the proof of the proposition.
\end{proof}

%%%%%%%%%%%%%%%%%%%%%%%%%%%%%%%%%%%%%%%%%%%
%%%%%%%%%%%%%%%%%%%%%%%%%%%%%%%%%%%%%%%%%%

\section{Special Unipotent Orbits and Fourier Coefficients}

A symplectic partition is called {\it special}
if it has an even number of even parts between any two consecutive
odd ones and an even number of even parts greater than the largest odd part,
equivalently, the number of even parts (counted with multiplicity),
greater than any odd part is even.

It is a general expectation that for any irreducible automorphic representation $\pi$, the partitions in
the set $\mathfrak{n}^m(\pi)$ should be all special. This has been checked in \cite[Theorem 2.1]{GRS03} for $\Sp_{2n}$, and
a very sketching argument is given in \cite{G06} for all classical groups.
We give here a complete proof for the fact that for any irreducible
automorphic representation $\pi$ of symplectic groups $G_n(\BA)$, a partition $\ul{p}$
providing non-vanishing Fourier coefficients implies that
the symplectic expansion $\ul{p}^G$ of $\ul{p}$ also provides non-vanishing Fourier coefficients.
As an immediate corollary, this implies that the partitions in
the set $\mathfrak{n}^m(\pi)$ are all special. We remark that the argument given here has potential applications
to other problems on Fourier coefficients of automorphic representations, which will be dealt with in our future work.

For a symplectic partition $\ul{p}$ of $2n$,
which is not special, the smallest
special symplectic partition which is greater than $\ul{p}$
is call the $G$-expansion of $\ul{p}$, denoted
by $\ul{p}^G$, with $G=\Sp_{2n}$ here. Theorem 6.3.9 of \cite{CM93} gives a recipe for passing from
a symplectic partition $\ul{p}$ to its $G$-expansion, which can be
explicitly described as follows.

Given a symplectic partition $\ul{p}$ of $2n$, whose odd parts occur with even
multiplicity, we may write $\ul{p}=[p_1 p_2 \cdots p_r]$
with $p_1 \geq p_2 \geq \cdots \geq p_r >0$. Enumerate the
indices $i$ such that $p_{2i} = p_{2i+1}$ is odd and
$p_{2i-1} \neq p_{2i}$ as $i_1 < \cdots < i_t$. Then
the $G$-expansion of $\ul{p}$ can be obtained by
replacing each pair of parts $(p_{2i_j}, p_{2i_j+1})$
by $(p_{2i_j}+1, p_{2i_j+1}-1)$, respectively,
and leaving the other parts alone.
For example, for the symplectic partition $\ul{p}=[65^243^221^2]$,
which is not special, we have $p_1 \neq p_2 = p_3 =5$,
and $p_7 \neq p_8 = p_9 =1$. Then $\ul{p}^G = [6^2 4^2 3^22^2]$,
which is exactly obtained by replacing the pair $(5, 5)$
by $(6,4)$, $(1,1)$ by $(2,0)$, and leaving the other parts alone.

\begin{thm}\label{thm9}
Let $\pi$ be an irreducible automorphic representation
of $G_n(\BA)=\Sp_{2n}(\BA)$. If $\pi$ has a nonzero Fourier coefficient
attached to a non-special symplectic partition $\ul{p}$ of $2n$,
then $\pi$ must have a nonzero Fourier coefficient
attached to $\ul{p}^G$, the $G$-expansion of $\ul{p}$.
\end{thm}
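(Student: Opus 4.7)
The plan is to prove the theorem by an inductive algorithm on the number $t$ of qualifying pairs of $\ul{p}$, where by a qualifying pair I mean one of the pairs $(p_{2i},p_{2i+1})=(2k+1,2k+1)$ with $p_{2i-1}\neq p_{2i}$ appearing in the recipe for the $G$-expansion from the excerpt. The base case $t=0$ is trivial, since then $\ul{p}$ is already special and $\ul{p}^G=\ul{p}$. For the inductive step I will handle the topmost qualifying pair, swapping $(2k+1,2k+1)$ to $(2k+2,2k)$ to produce a new symplectic partition $\ul{p}'$ with $(\ul{p}')^G=\ul{p}^G$ and $t-1$ qualifying pairs, and then invoke the inductive hypothesis on $\ul{p}'$.

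Fix the topmost qualifying pair and write $\ul{p}=[\ul{p}_1,(2k+1)^2,\ul{p}_2]$, where all parts of $\ul{p}_1$ are $\geq 2k+2$ and all parts of $\ul{p}_2$ are $\leq 2k$. I first peel $\ul{p}_1$ off the top of $\ul{p}$, iterating Lemma \ref{lem1} (for each single even top part) and Proposition \ref{prop2} (for each pair of equal odd top parts, which always exist because odd values of $\ul{p}$ occur with even multiplicity and $\ul{p}_2$ contains no odd value $\geq 2k+1$). This converts the $\psi_{\ul{p},\ul{a}}$-Fourier coefficient of a form in $\pi$ into an equivalent nested Fourier coefficient whose innermost layer is attached to $[(2k+1)^2,\ul{p}_2]$ on a smaller symplectic group $\Sp_{2n'}(\BA)$ with $2n'=4k+2+|\ul{p}_2|$. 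Applying Proposition \ref{prop2} once more inside $\Sp_{2n'}(\BA)$ rewrites this as a nonvanishing Fourier coefficient on the composite partition $[(2k+1)^2 1^{2n'-4k-2}]\circ\ul{p}_2$.

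At this stage Proposition \ref{prop1} is exactly the tool to swap the block $[(2k+1)^2 1^{2n'-4k-2}]$ for the composite $[(2k)1^{*}]\circ[(2k+2)1^{*}]$ with matched square classes $\beta=-\alpha$, provided that the corresponding inner automorphic form on $\Sp_{2n'}(\BA)$ does not carry a nonvanishing Fourier coefficient attached to $[(2k+2)1^{2n'-2k-2}]$. Granting this hypothesis, one reassembles the resulting composite Fourier coefficient by reversing the peeling through Lemma \ref{lem1}, producing the desired nonvanishing Fourier coefficient attached to $\ul{p}'=[\ul{p}_1,(2k+2)(2k),\ul{p}_2]$ on $G_n(\BA)$; induction on $t$ then finishes the argument.

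The main obstacle is the dichotomy forced by Proposition \ref{prop1}. If the inner automorphic form does carry a nonvanishing Fourier coefficient attached to $[(2k+2)1^{2n'-2k-2}]$, reassembly instead produces a nonvanishing Fourier coefficient attached to some symplectic partition $\ul{p}''$ which strictly dominates $\ul{p}$ in the natural ordering but does not immediately equal $\ul{p}^G$ and need not even be non-special. To handle this branch I would argue by a secondary induction on a more refined complexity invariant (for instance, the pair $(n,\ul{p}^G-\ul{p})$ in lexicographic order), applying the algorithm recursively to $\ul{p}''$ and iterating the swap on the remaining qualifying pairs until $\ul{p}^G$ is reached. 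The careful combinatorial bookkeeping of the iterated peeling, the compatibility of the composite Fourier coefficients with the $G$-expansion recipe, and the verification that the chosen complexity invariant strictly decreases in every branch of the algorithm are the secondary technical challenges.
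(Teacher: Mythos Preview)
Your overall plan—peel off the top block, swap one qualifying odd pair, reassemble, and induct—is close in spirit to the paper's, but the mechanism you choose for the swap leaves a genuine gap, and the paper closes it by a different route.

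The paper does not use Proposition~\ref{prop1} for the swap. Instead it proves a separate, \emph{unconditional} lemma (Lemma~\ref{lem3}): from a nonzero Fourier coefficient for $[(2n_1)(2n_2+1)^{2k}p_1^{e_1}\cdots p_s^{e_s}]$ with $2n_1\ge 2n_2+2$ one obtains a nonzero Fourier coefficient for $[(2n_1)(2n_2+2)(2n_2+1)^{2k-2}(2n_2)p_1^{e_1}\cdots p_s^{e_s}]$. The crucial point is that one even part $(2n_1)$ immediately above the odd block is \emph{kept}, not peeled. Descending along the whole partition then lands on a \emph{genuine} automorphic representation of $\widetilde{\Sp}_{2k}(\BA)$, and any nonzero genuine form there has a nontrivial Fourier coefficient along the highest long root subgroup; this coefficient is precisely what produces the swap, so no vanishing hypothesis is ever invoked and no dichotomy arises. (One checks from the even-multiplicity of odd parts that the part just above the first qualifying odd pair is always even, so such a $(2n_1)$ is always available.) After the swap the prefix $[(2n_1)(2n_2+2)(2n_2+1)^{2k-2}]$ is special and can be peeled, leaving $[(2n_2)p_1\cdots p_s]$ on a strictly smaller group; induction on rank finishes.

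Your route through Proposition~\ref{prop1} does face the dichotomy you describe, but the secondary induction you sketch does not close it. If the descent $\sigma$ on the smaller group carries a Fourier coefficient attached to $[(2k+2)1^{2n'-2k-2}]$, then reassembling with $\ul{p}_1$ via Lemma~\ref{lem1} yields a Fourier coefficient for $\ul{p}''=[\ul{p}_1,(2k+2),1^{2n'-2k-2}]$, and this partition does \emph{not} dominate $\ul{p}=[\ul{p}_1,(2k+1)^2,\ul{p}_2]$ once $k\ge 1$: at the position just past $(2k+2)$ the partial sums are $2k+3$ versus $4k+2$. So the complexity invariant you propose need not decrease and the induction does not terminate. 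There is also a smaller issue in the good branch: Proposition~\ref{prop1} relates the nonvanishing of $[(2k+1)^21^{*}]$ and of the composite $[(2k)1^{*}]\circ[(2k+2)1^{*}]$ through a Fourier-expansion step, so it does not by itself carry the inner $\ul{p}_2$-layer across, nor does it deliver the composite in the order $[(2k+2)1^{*}]\circ[(2k)1^{*}]$ needed to reassemble into $[\ul{p}_1,(2k+2)(2k),\ul{p}_2]$ via Lemma~\ref{lem1}; both points would require additional argument. Keeping the even part and arguing as in Lemma~\ref{lem3} sidesteps all of this at once.
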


\begin{proof}
We are going to prove this theorem by induction on the rank of $G_n$,
and reduce the proof to Lemma \ref{lem3}.
When $n=1$, it is obvious that the statement is true, since any symplectic partition of $2$ is special. We assume that the statement is true for any irreducible automorphic representation of ${\Sp}_{2m}(\BA)$ with $m <n$.

Now we consider the case of ${\Sp}_{2n}(\BA)$.
Since $\ul{p}$ is not special, we may assume that
$(2n_2+1)$ is the largest odd part of $\ul{p}$ such that
the number of even parts (counted with multiplicity)
greater than $(2n_2+1)$ is odd. We may write
$\ul{p}$ as
$$[p_1 p_2 \cdots p_r (2n_1)(2n_2+1)^{2k} p_{r+1} \cdots p_{r+s}],$$
where $2n_2+1 > p_{r+1}$ if $s>0$, and the partition $[p_1 p_2 \cdots p_r]$ is special.

Since by the above assumption, the number of even parts greater than or equal to $2n_1$ is even,
after taking the descent of $\pi$ corresponding to the partition $[p_1 p_2 \cdots p_r 1^{2n-\sum_{i=1}^r p_i}]$,
we will also get an automorphic representation of $\Sp_{2n-\sum_{i=1}^r p_i}(\BA)$.
Note that the notion of descent with respect to any symplectic partition is defined in Section 1 of \cite{GRS03}.
Therefore, applying Lemma \ref{lem1}
and Proposition \ref{prop2} repeatedly, it is enough to consider $\ul{p}$
of the following form
\begin{equation}\label{thm9equ1}
\ul{p}=[(2n_1)(2n_2+1)^{2k} p_{1} \cdots p_{s}],
\end{equation}
where $2n_2+1 > p_{1}$ if $s>0$.

By Lemma \ref{lem3} below, $\pi$ has a nonzero Fourier coefficient
attached to the partition
\begin{equation}\label{thm9equ2}
[(2n_1)(2n_2+2)(2n_2+1)^{2k-2}(2n_2)p_{1} \cdots p_{s}].
\end{equation}

Applying Lemma \ref{lem1} and Proposition \ref{prop2} repeatedly,
$\pi$ has a nonzero Fourier coefficient
attached to the partition in \eqref{thm9equ2} if and only if
it has a nonzero Fourier coefficient attached to the
following partition
\begin{equation}\label{thm9equ3}
[(2n_1)(2n_2+2)(2n_2+1)^{2k-2}] \circ [(2n_2)p_{1} \cdots p_{s}].
\end{equation}

By Lemma \ref{lem1} and Proposition \ref{prop2},
it remains to prove that for any irreducible automorphic
representation $\pi'$ of $G_m(\BA)$, with $2m=2n-2n_1-(2n_2+2)-(2n_2+1)(2k-2)$,
if it has a nonzero Fourier coefficient attached to
the partition $[(2n_2)p_{1} \cdots p_{s}]$ and if this partition
is not special, then $\pi'$ must have a nonzero
Fourier coefficient attached to
$[(2n_2)p_{1} \cdots p_{s}]^G$, the $G$-expansion of
$[(2n_2)p_{1} \cdots p_{s}]$. Since $m < n$,
this is true by induction.

Therefore, $\pi$ has a nonzero Fourier coefficient attached to the
following partition
\begin{equation}\label{thm9equ4}
[(2n_1)(2n_2+2)(2n_2+1)^{2k-2}] \circ [(2n_2)p_{1} \cdots p_{s}]^G.
\end{equation}
Applying Lemma \ref{lem1} and Proposition \ref{prop2} repeatedly again,
$\pi$ has a nonzero Fourier coefficient attached to the
following partition
\begin{equation}\label{thm9equ5}
[(2n_1)(2n_2+2)(2n_2+1)^{2k-2}[(2n_2)p_{1} \cdots p_{s}]^G],
\end{equation}
which is exactly $[(2n_1)(2n_2+1)^{2k} p_{1} \cdots p_{s}]^G$, the $G$-expansion
of $\ul{p}$.

This completes the proof of the theorem, up to Lemma \ref{lem3}.
\end{proof}

Theorem \ref{thm9} easily implies the following corollary.

\begin{cor}[Theorem 2.1 of \cite{GRS03}]\label{cor1}
Let $\pi$ be an irreducible automorphic representation
of $\Sp_{2n}(\BA)$. Any partition in the set $\mathfrak{n}^m(\pi)$ is special.
\end{cor}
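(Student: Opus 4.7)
The plan is to deduce Corollary \ref{cor1} as an immediate consequence of Theorem \ref{thm9} together with the definition of $\mathfrak{n}^m(\pi)$. By definition, $\mathfrak{n}^m(\pi)$ consists of those symplectic partitions $\ul{p}$ for which $\pi$ has a nonzero $\psi_{\ul{p},\ul{a}}$-Fourier coefficient for some choice of $\ul{a}$, and such that no strictly larger partition $\ul{p}' > \ul{p}$ carries a nonzero Fourier coefficient of $\pi$. So the statement that every element of $\mathfrak{n}^m(\pi)$ is special is exactly a maximality argument.

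The proof proceeds by contradiction. Suppose that some $\ul{p} \in \mathfrak{n}^m(\pi)$ fails to be special. Then by the recipe recalled just before Theorem \ref{thm9} (which is Theorem 6.3.9 of \cite{CM93} in the symplectic case), the $G$-expansion $\ul{p}^G$ is obtained from $\ul{p}$ by replacing each pair $(p_{2i_j}, p_{2i_j+1}) = (m,m)$ with $m$ odd, where the constraint $p_{2i_j-1} \neq p_{2i_j}$ is satisfied, by $(m+1, m-1)$. Because $\ul{p}$ is assumed non-special, the enumeration $i_1 < \cdots < i_t$ is non-empty, so at least one such replacement actually occurs. Consequently $\ul{p}^G \neq \ul{p}$, and since each replacement strictly increases the appropriate partial sum while preserving all others, $\ul{p}^G > \ul{p}$ in the dominance order.

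By Theorem \ref{thm9}, $\pi$ has a nonzero Fourier coefficient attached to $\ul{p}^G$. This contradicts the maximality clause in the definition of $\mathfrak{n}^m(\pi)$, proving that every element of $\mathfrak{n}^m(\pi)$ must be special. There is no real obstacle here, since the substantive work (the claim that non-vanishing of a Fourier coefficient along $\ul{p}$ propagates to $\ul{p}^G$) has already been carried out in Theorem \ref{thm9}; the only thing to check, which is combinatorial and routine, is the strict inequality $\ul{p}^G > \ul{p}$ when $\ul{p}$ is non-special, and this is built into the construction of the $G$-expansion.
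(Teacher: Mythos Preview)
Your proof is correct and is exactly the argument the paper intends: the paper merely states that ``Theorem \ref{thm9} easily implies the following corollary,'' and you have spelled out the immediate contradiction with the maximality clause in the definition of $\mathfrak{n}^m(\pi)$ once $\ul{p}^G > \ul{p}$.
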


It remains to prove

\begin{lem}\label{lem3}
Let $\pi$ be an irreducible automorphic representation
of $\Sp_{2n}(\BA)$. Assume that $\pi$ has a nonzero Fourier coefficient
attached to a symplectic partition $\ul{p}=[(2n_1)(2n_2+1)^{2k}p_{1}^{e_1} \cdots p_{s}^{e_s}]$ of $2n$ with the conditions that
\begin{enumerate}
\item[(1)] $k \geq 1$ and $2n_1 \geq 2n_2 +2$; and
\item[(2)] if $s>0$, then $2n_2+1 > p_{1}$; and $e_i = 1$ if $p_i$ is even, and $e_i=2$ if $p_i$ is odd, for $1 \leq i \leq s$.
\end{enumerate}
Then $\pi$ must have a nonzero Fourier coefficient
attached to the following partition
$$
[(2n_1)(2n_2+2)(2n_2+1)^{2k-2}(2n_2)p_{1}^{e_1} \cdots p_{s}^{e_s}].
$$
\end{lem}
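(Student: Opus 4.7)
The plan is to prove Lemma \ref{lem3} by combining Lemma \ref{lem1}, Proposition \ref{prop2}, and the key manipulations in the proof of Proposition \ref{prop1}, carried out at the appropriate focus level of a composite Fourier coefficient. The idea is to ``isolate'' the pair $(2n_2+1)^2$ inside a composite structure, replace it by $(2n_2+2)(2n_2)$ using the change of variable from Proposition \ref{prop1}, and then reassemble.

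First, I would apply Lemma \ref{lem1} to peel off the leading part $(2n_1)$, rewriting the nonvanishing of the Fourier coefficient attached to $\ul{p}$ as the nonvanishing of a Fourier coefficient attached to the composite partition $[(2n_1)1^{2n-2n_1}]\circ[(2n_2+1)^{2k}p_1^{e_1}\cdots p_s^{e_s}]$. Condition (2) of the lemma guarantees $2n_2+1\geq p_1$, so I can iteratively apply Proposition \ref{prop2} to strip off the pairs $(2n_2+1)^2$ one at a time, landing on the composite
\[
[(2n_1)1^{?}]\circ [(2n_2+1)^2\,1^{?}]^{\circ k}\circ[p_1^{e_1}\cdots p_s^{e_s}].
\]
The same reductions applied to $\ul{p}'$ show that the nonvanishing of the Fourier coefficient attached to $\ul{p}'$ is equivalent to the nonvanishing of the Fourier coefficient attached to
\[
[(2n_1)1^{?}]\circ [(2n_2+1)^2\,1^{?}]^{\circ (k-1)}\circ[(2n_2)1^{?}]\circ[(2n_2+2)1^{?}]\circ[p_1^{e_1}\cdots p_s^{e_s}].
\]
Thus it suffices to replace the last $[(2n_2+1)^2\,1^{?}]$ block by the composite $[(2n_2)1^{?}]\circ[(2n_2+2)1^{?}]$ inside the ambient composite.

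Second, I would carry out this replacement by mimicking the proof of Proposition \ref{prop1} at the focus level. After adding the group $\wt Y$ via Corollary \ref{halfheisen}, conjugating by an appropriate Weyl element $\omega_0$ analogous to the $\omega$ in Proposition \ref{prop1}, and applying Lemma \ref{nvequ} along a sequence of quadruples (mirroring those assembled in the proof of Proposition \ref{prop1}), the focus integral becomes the analog of \eqref{thm1equ28}. Fourier expanding along the root subgroup $X_{e_{n_2+1}+e_{n_2+1}}$ splits the integral into a trivial orbit piece and a non-trivial orbit piece. The trivial orbit piece, after the change of variable $\ol{\epsilon}$ built from $B=\tfrac12\begin{pmatrix}1 & -1 \\ 1 & 1\end{pmatrix}$ and conjugation by the Weyl element $\omega'$ used in Proposition \ref{prop1}, matches exactly the Fourier coefficient for the composite $[(2n_2)1^{?}]\circ[(2n_2+2)1^{?}]$ with the two characters linked by $\beta=-\alpha \bmod (F^*)^2$. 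Re-assembling by Lemma \ref{lem1} and Proposition \ref{prop2} then produces the desired nonzero Fourier coefficient of $\pi$ attached to $\ul{p}'$.

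The hard part will be handling the non-trivial orbit piece of the Fourier expansion along $X_{e_{n_2+1}+e_{n_2+1}}$. In Proposition \ref{prop1} it is killed by the hypothesis that $\pi$ has no nonzero Fourier coefficient attached to $[(2k+2)1^{2n-2k-2}]$, but no such hypothesis is available here. My plan is to argue by case analysis: one checks that in the composite framework the non-trivial orbit contribution, combined with the outer composite pieces and the inner $[p_1^{e_1}\cdots p_s^{e_s}]$ factor, itself yields a nonzero Fourier coefficient of $\pi$ attached to a partition that either coincides with $\ul{p}'$ after reassembly (closing the argument immediately via Lemma \ref{lem1}) or is strictly dominated in the partition order by $\ul{p}'$; in the latter case one iterates the Proposition \ref{prop1}-style manipulation on the remaining focus blocks, using the hypothesis that the original Fourier coefficient attached to $\ul{p}$ is nonzero to drive the induction. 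Throughout, Corollary \ref{halfheisen} and repeated applications of Lemma \ref{nvequ} allow us to move freely between different presentations of the same composite Fourier coefficient, which is what makes the bookkeeping tractable.
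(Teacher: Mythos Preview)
Your reduction via Lemma \ref{lem1} and Proposition \ref{prop2} to a composite Fourier coefficient is fine, and the idea of mimicking the manipulations of Proposition \ref{prop1} at the focus level is natural. However, the proposal has a genuine gap precisely at the point you flag as the ``hard part,'' and your suggested case analysis does not close it.

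The problem is the direction of the implication. In Proposition \ref{prop1}, passing from the Fourier coefficient attached to $[(2k+1)^21^{2n-4k-2}]$ to the one attached to the composite $[(2k)1^{?}]\circ[(2k+2)1^{?}]$ goes through the Fourier expansion along $X_{e_{k+1}+e_{k+1}}$; the nontrivial orbit there contains an inner Fourier coefficient attached to $[(2k+2)1^{2n-2k-2}]$, and the vanishing hypothesis is what forces the \emph{trivial} orbit to carry the nonvanishing. Without that hypothesis (and you have none here), knowing that the full expansion is nonzero tells you only that \emph{some} orbit piece is nonzero; it could be entirely concentrated on the nontrivial orbit. Your plan to argue that the nontrivial piece ``either coincides with $\ul{p}'$ after reassembly or is strictly dominated by $\ul{p}'$'' does not work: the nontrivial piece involves a $(2n_2+2)$-part but loses the $(2n_2)$-part and the tail $[p_1^{e_1}\cdots p_s^{e_s}]$ at that stage, so it neither reassembles to $\ul{p}'$ nor sets up a terminating induction.

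The paper's proof supplies exactly the missing nonvanishing input, by a mechanism absent from your proposal. The Fourier--Jacobi descent of $\pi$ along the full partition $\ul{p}$ (in the sense of \cite[(1.3)]{GRS03}) is, by Theorem 5.8 of \cite{N99}, a \emph{genuine} automorphic representation of $\widetilde{\Sp}_{2k}(\BA)$. Genuineness forces any nonzero vector to have a \emph{nontrivial} Fourier coefficient along the highest root subgroup $X_\alpha$ of $\Sp_{2k}$: a genuine automorphic form cannot be $X_\alpha$-invariant. This guarantees, a priori, a nonzero nontrivial Fourier coefficient of the form \eqref{lem3equ6}. From that point on, the paper carries out a long sequence of root exchanges via Lemma \ref{nvequ} and a conjugation by a carefully chosen element $\epsilon$ built from a $3\times 3$ block, ultimately matching the integral with a Fourier coefficient attached to $\ul{p}'$. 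The metaplectic input is the key idea you are missing; without it, the orbit you need to be nonzero cannot be isolated.
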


\begin{proof}
First, we note that if $s=0$ and $n_2=0$, then
$\ul{p}=[(2n_1)1^{2k}]$.
By \cite[Lemma 1]{GRS03}, a Fourier coefficient of $\pi$
with respect to $\ul{p}=[(2n_1)1^{2k}]$ and a square class $\alpha$ is non-vanishing if and only if the automorphic descent (see \cite[(1.3)]{GRS03} and \cite{GRS11}) of $\pi$ with respect to $\ul{p}=[(2n_1)1^{2k}]$ and the square class $\alpha$ is non-vanishing.
After taking this descent operation, we get a genuine automorphic representation
of $\tilsp_{2k}(\BA)$, which has a nonzero Fourier coefficient
attached to $[21^{2k-2}]$.
Hence, in this case, by Lemma \ref{lem1}, it is obvious that
$\pi$ have a nonzero nontrivial Fourier coefficient
attached to $[(2n_1)21^{2k-2}]$.
Therefore, we may assume that if $s=0$, then $n_2 \geq 1$.

Take $\varphi \in \pi$. We start from a nonzero $\psi_{\ul{p}, \{a_1\} \cup \ul{a}}$-Fourier
coefficient of $\varphi$ attached to $\ul{p}$,
where $\ul{a}= \{a_i\in F^*/(F^*)^2\ |\ e_i =1, 1 \leq i \leq s\}$ and $a_1\in F^*/(F^*)^2$.

By \eqref{fc}, $\varphi^{\psi_{\ul{p}, \{a_1\} \cup \ul{a}}}$
is defined as follows:
\begin{equation}\label{lem3equ1}
\int_{[V_{\ul{p}, 2}]} \varphi(vg) \psi_{\ul{p}, \{a_1\} \cup \ul{a}}^{-1}(v) dv.
\end{equation}

By Corollary \ref{halfheisen} (see also Lemma 1.1 of \cite{GRS03}),
the $\psi_{\ul{p}, \{a_1\} \cup \ul{a}}$-Fourier
coefficient of $\varphi$ in \eqref{lem3equ1} is non-vanishing if and only if
the following integral is non-vanishing:
\begin{equation}\label{lem3equ2}
\int_{[YV_{\ul{p}, 2}]} \varphi(vyg) \psi_{\ul{p}, \{a_1\} \cup \ul{a}}^{-1}(v) dvdy,
\end{equation}
where $Y$ is a group defined in \eqref{equy}, corresponding to
the partition $\ul{p}$.

Let $m=n-n_1-k(2n_2+1)$, then $[p_{1}^{e_1} \cdots p_{s}^{e_s}]$
is a partition of $2m$.
Let $\omega_1$ be a Weyl element of $G_{m}$ which sends the one-dimensional
toric subgroup $\CH_{[p_1^{e_1}\cdots p_s^{e_s}]}$ defined in \eqref{equtorus}
to the following toric subgroup:
\begin{equation*}
\{\diag(t^{p_1-1}, \ldots, t^{1-p_1})\},
\end{equation*}
where the exponents of $t$ are of non-increasing order.

Note that $[(2n_1)(2n_2+1)^{2k}]$ is a partition of $2n-2m$.
Let $\omega_2$ be a Weyl element of $G_{n-m}$ which sends the one-dimensional
toric subgroup $\CH_{[(2n_1)(2n_2+1)^{2k}]}$ defined in \eqref{equtorus}
to the following toric subgroup:
\begin{equation*}
\{\diag(t^{2n_1-1}, \ldots, t^{1-2n_1})\},
\end{equation*}
where the exponents of $t$ are of non-increasing order.
Write $\omega_2$ as
$\begin{pmatrix}
\omega_2^1 & \omega_2^2\\
\omega_2^3 & \omega_2^4
\end{pmatrix}$,
where $\omega_2^i$ is of rank $(n-m) \times (n-m)$, for any $1 \leq i \leq 4$.

Let $\omega_3 = \begin{pmatrix}
\omega_2^1 &0 & \omega_2^2\\
0& \omega_1 &0 \\
\omega_2^3 & 0& \omega_2^4
\end{pmatrix}$.
Conjugating by $\omega_3$, the integral in
\eqref{lem3equ2} becomes:
\begin{equation}\label{lem3equ3}
\int_{[W_1]} \varphi(w \omega_3 g) \psi_{W_1}^{-1}(w) dw,
\end{equation}
where $W_1= \omega_3 YV_{\ul{p}, 2} \omega_3^{-1}$,
$\psi_{W_1}(w)=\psi_{\ul{p}, \{a_1\} \cup \ul{a}}(\omega_3^{-1} w \omega_3)$.
Elements in $W_1$ have the following form
\begin{equation*}
w=\begin{pmatrix}
z & q_1 & q_2\\
0 & v' & q_1^*\\
0 & 0 & z^*
\end{pmatrix}
\begin{pmatrix}
I_{n-m} & 0 & 0\\
p_1 & I_{2m} & 0\\
0 & p_1^* & I_{n-m}
\end{pmatrix},
\end{equation*}
where $z$ is in a subgroup of $N_{n_1+k(2n_2+1)}$; $q_1 \in \Mat_{(n_1+k(2n_2+1)) \times (2m)}$
with certain conditions;
$p_1 \in \Mat_{(2m) \times (n_1+k(2n_2+1))}$
with certain conditions;
$q_2 \in \Mat'_{(n_1+k(2n_2+1)) \times (n_1+k(2n_2+1))}$
with $q_2(i,j) = 0$ for $n_1+2kn_2+1 \leq i \leq n_1+k(2n_2+1)$ and
$1 \leq j \leq k$; and finally
$v' \in \omega_1 \wt{Y}V_{[p_1^{e_1}\cdots p_r^{e_r}],2} \omega_1^{-1}$
with $\wt{Y}$ defined in \eqref{equy}
corresponding to the partition $[p_1^{e_1}\cdots p_s^{e_s}]$. Note that
$\Mat'_{(n_1+k(2n_2+1)) \times (n_1+k(2n_2+1))}$
consists of matrices $q$ in $\Mat_{(n_1+k(2n_2+1)) \times (n_1+k(2n_2+1))}$ satisfying the property
that $q^t v_{n_1+k(2n_2+1)} - v_{n_1+k(2n_2+1)} q=0$, where $v_{n_1+k(2n_2+1)}$ is a matrix only with
ones on the second diagonal. After conjugating by a suitable toric element to adjust the signs, the character can be given by
\begin{align}\label{lem3equ10}
\begin{split}
&\psi_{W_1}(w)\\
=  \ & \psi(\sum_{i=1}^{n_1-n_2-1} w_{i,i+1})
\psi(\sum_{j=n_1-n_2}^{n_1+2kn_2-k-1} w_{j,j+2k+1})\\
\cdot & \psi(\sum_{l=n_1+2kn_2-k}^{n_1+2kn_2-1} w_{l,l+2k+1+2m})
\psi(a_1 w_{n_1+2kn_2,n_1+2kn_2+2k+1+2m})\\
\cdot & \psi_{[p_1^{e_1}\cdots p_s^{e_s}],\ul{a}}(\omega_1^{-1} v' \omega_1).
\end{split}
\end{align}

By Theorem 5.8 of \cite{N99}, as $\varphi$ runs through $\pi$,
the integrals in (1.3) of \cite{GRS03} with respect to the nilpotent orbit corresponding to $(\ul{p}, \{a_1\} \cup \ul{a})$ define a geniune representation
of $\wt{\Sp}_{2k}(\BA)$. Let $\alpha$ be the highest root of
${\Sp}_{2k}$ and $X_{\alpha}$ be the corresponding
one-dimensional root subgroup. After taking the
Fourier expansion the integral in (1.3) of \cite{GRS03} along $X_{\alpha}(\BA)$, we get a nonzero
nontrivial Fourier coefficient.
By the property at the end of Section 1 of \cite{GRS03}, the integral in (1.3) of \cite{GRS03} has a nonzero nontrivial Fourier coefficient along $X_{\alpha}(\BA)$ if and only if the integral in \eqref{lem3equ2} has a nonzero nontrivial Fourier coefficient along $X_{\alpha}(\BA)$.
Therefore, the integral \eqref{lem3equ3} has a nonzero nontrivial Fourier coefficient along $X_{\alpha}(\BA)$.
Without loss of generality,
we assume that the following Fourier coefficient is non-vanishing
\begin{equation}\label{lem3equ6}
\int_{[X_{\alpha}W_1]} \varphi(xw \omega_3 g) \psi_{W_1}^{-1}(w)\psi^{-1}(\gamma x) dxdw,
\end{equation}
with $X_{\alpha}$ being diagonally embedded into $\Sp_{2n}$.

We define that
$\alpha_i = e_{n_1-n_2+(n_2-i)(2k+1)}-e_{n_1-n_2+(n_2-i)(2k+1)+1}$ and
$\beta_i = e_{n_1-n_2+(n_2-i)(2k+1)+1}-e_{n_1-n_2+(n_2-i)(2k+1)+2k+1}$
for $1 \leq i \leq n_2$,
and let $X_{\alpha_i}$ and $X_{\beta_i}$ be the corresponding one-dimensional
root subgroups, respectively.
Write $X_{\alpha}W_1 = (\prod_{i=1}^{n_2} X_{\alpha_i}) \wt{W}_1$,
with $\prod_{i=1}^{n_2} X_{\alpha_i} \cap \wt{W}_1=\{1\}$,
and denote $\psi_{\wt{W}_1} = \psi_{X_{\alpha}W_1}|_{\wt{W}_1}$,
where $\psi_{X_{\alpha}W_1}(xw) = \psi_{W_1}(w)\psi(\gamma x)$.
Consider the quadruple
$(\prod_{i=2}^{n_2} X_{\alpha_i} \wt{W}_1, \psi_{\wt{W}_1}, X_{\alpha_1}, X_{\beta_1})$.
It is easy to see that it satisfies the conditions for
Lemma \ref{nvequ}. Therefore, by Lemma \ref{nvequ},
the integral in \eqref{lem3equ3} is non-vanishing
if and only if the following integral is non-vanishing
\begin{equation}\label{lem3equ4}
\int_{[X_{\beta_1}\prod_{i=2}^{n_2} X_{\alpha_i} \wt{W}_1]} \varphi(xw \omega_3 g) \psi_{\wt{W}_1}^{-1}(w) dxdw.
\end{equation}

Then we consider the following sequence of quadruples
\begin{align*}
&(X_{\beta_1}\prod_{i=3}^{n_2} X_{\alpha_i} \wt{W}_1, \psi_{\wt{W}_1}, X_{\alpha_2}, X_{\beta_2}),\\
&(\prod_{j=1}^{2} X_{\beta_j} \prod_{i=4}^{n_2} X_{\alpha_i} \wt{W}_1, \psi_{\wt{W}_1}, X_{\alpha_3}, X_{\beta_3}),\\
&\cdots\\
&(\prod_{j=1}^{n_2-1} X_{\beta_j} \wt{W}_1, \psi_{\wt{W}_1}, X_{\alpha_{n_2}},
X_{\beta_{n_2}}).
\end{align*}
Applying Lemma \ref{nvequ} repeatedly, the integral
in \eqref{lem3equ4} is non-vanishing if and only if the
following integral is non-vanishing
\begin{equation}\label{lem3equ5}
\int_{[\prod_{j=1}^{n_2} X_{\beta_j} \wt{W}_1]} \varphi(xw \omega_3 g) \psi_{\wt{W}_1}^{-1}(w) dxdw.
\end{equation}

Let $W_2=\prod_{j=1}^{n_2} X_{\beta_j} \wt{W}_1$, and
$\psi_{W_2}(xw) = \psi_{\wt{W}_1}(w)$, for $x \in \prod_{j=1}^{n_2} X_{\beta_i}$,
and $w \in \wt{W}_1$.
For $1 \leq i \leq n_2$ and $1 \leq j \leq 2k-2$,
let
\begin{align*}
\alpha_{i,j} & = e_{n_1-n_2+1+(i-1)(2k+1)+j} - e_{n_1-n_2+1+i(2k+1)},\\
\beta_{i,j}& =e_{n_1-n_2+1+(i-1)(2k+1)}-e_{n_1-n_2+1+(i-1)(2k+1)+j},
\end{align*}
and let $X_{\alpha_{i,j}}$, $X_{\beta_{i,j}}$ be the corresponding one-dimensional
root subgroups.
Let $X_i = \prod_{j=1}^{2k-2} X_{\alpha_{i,j}}$,
$Y_i = \prod_{j=1}^{2k-2} X_{\beta_{i,j}}$.
Write $W_2 = (\prod_{i=1}^{n_2} X_i) \wt{W}_2$,
with $\prod_{i=1}^{n_2} X_i \cap \wt{W}_2=\{1\},$
and denote $\psi_{\wt{W}_2} = \psi_{W_2}|_{\wt{W}_2}$.

Consider the following sequence of quadruples
\begin{align*}
&(\prod_{i=2}^{n_2} X_{i} \wt{W}_2, \psi_{\wt{W}_2}, X_{1}, Y_{1}),\\
&(Y_1 \prod_{i=3}^{n_2} X_{i} \wt{W}_2, \psi_{\wt{W}_2}, X_{2}, Y_2),\\
&\cdots\\
&(\prod_{j=1}^{n_2-1} Y_j \wt{W}_2, \psi_{\wt{W}_2}, X_{{n_2}}, Y_{n_2}).
\end{align*}
Applying Lemma \ref{nvequ} repeatedly, the integral
in \eqref{lem3equ5} is non-vanishing if and only if the
following integral is non-vanishing
\begin{equation}\label{lem3equ7}
\int_{[\prod_{j=1}^{n_2} Y_j \wt{W}_2]} \varphi(xw \omega_3 g) \psi_{\wt{W}_2}^{-1}(w) dxdw.
\end{equation}

Define $W_3 = \prod_{j=1}^{n_2} Y_j \wt{W}_2$.
Let $\psi_{W_3}(yw) = \psi_{\wt{W}_2}(w)$
For $1 \leq i \leq n_2-1$, define
\begin{eqnarray*}
\gamma_i &=& e_{n_1-n_2-1+i(2k+1)}-e_{n_1-n_2+1+i(2k+1)}\\
\delta_i &=& e_{n_1-n_2+1+i(2k+1)}-e_{n_1-n_2+i(2k+1)+2k}.
\end{eqnarray*}
For $i=n_2$, define
\begin{eqnarray*}
\gamma_{n_2}& =& e_{n_1-n_2-1+n_2(2k+1)}-e_{n_1-n_2+1+n_2(2k+1)}\\
\delta_{n_2} &=& e_{n_1-n_2+1+n_2(2k+1)}+e_{n_1-n_2+1+n_2(2k+1)}.
\end{eqnarray*}
For $1 \leq i \leq n_2$, let
$X_{\gamma_i}$, $X_{\delta_i}$ be the corresponding
one-dimensional root subgroups.
Write $W_3 = \prod_{i=1}^{n_2} X_{\gamma_i} \wt{W}_3$
with $\prod_{i=1}^{n_2} X_{\gamma_i} \cap \wt{W}_3 = \{1\}$,
and denote $\psi_{\wt{W}_3} = \psi_{W_3}|_{\wt{W}_3}$.

Consider the following sequence of quadruples
\begin{align*}
&(\prod_{i=2}^{n_2} X_{\gamma_i} \wt{W}_3, \psi_{\wt{W}_3}, X_{\gamma_1}, X_{\delta_1}),\\
&(X_{\delta_1} \prod_{i=3}^{n_2} X_{\gamma_i} \wt{W}_3, \psi_{\wt{W}_3}, X_{\gamma_2}, X_{\delta_2}),\\
&\cdots\\
&(\prod_{j=1}^{n_2-1} X_{\delta_j} \wt{W}_3, \psi_{\wt{W}_3}, X_{\gamma_{n_2}}, X_{\delta_{n_2}}).
\end{align*}
Applying Lemma \ref{nvequ} repeatedly, the integral
in \eqref{lem3equ7} is non-vanishing if and only if the
following integral is non-vanishing
\begin{equation}\label{lem3equ8}
\int_{[\prod_{j=1}^{n_2} X_{\delta_j} \wt{W}_3]} \varphi(xw \omega_3 g) \psi_{\wt{W}_3}^{-1}(w) dxdw.
\end{equation}

Define $W_4 = \prod_{j=1}^{n_2} X_{\delta_j} \wt{W}_3$.
Let $\psi_{W_4}(xw) = \psi_{\wt{W}_3}(w)$
For $1 \leq s \leq k-1$, define
\begin{eqnarray*}
\gamma_s &=& e_{n_1-n_2-1+n_2(2k+1)}-e_{n_1-n_2+1+n_2(2k+1)+s}\\
\delta_s &=& e_{n_1-n_2+1+n_2(2k+1)+s} + e_{n_1-n_2+1+n_2(2k+1)},
\end{eqnarray*}
and $X_{\gamma_s}$, $X_{\delta_s}$ be the corresponding
quadruples.
Let $\ol{X} = \prod_{s=1}^{k-1} X_{\gamma_s}$,
$\ol{Y} = \prod_{s=1}^{k-1} X_{\delta_s}$.
Write $W_4 = \ol{X} \wt{W}_4$ with $\ol{X} \cap \wt{W}_4 = \{I_{2n}\}$,
and denote $\psi_{\wt{W}_4} = \psi_{W_4}|_{\wt{W}_4}$.
Consider the following quadruple
$(\wt{W}_4, \psi_{\wt{W}_4}, \ol{X}, \ol{Y})$.
By Lemma \ref{nvequ}, the integral in \eqref{lem3equ8}
is non-vanishing if and only if the following integral
is non-vanishing
\begin{equation}\label{lem3equ9}
\int_{[\ol{Y} \wt{W}_4]} \varphi(yw \omega_3 g) \psi_{\wt{W}_4}^{-1}(w) dydw.
\end{equation}

Let $W_5 = \ol{Y} \wt{W}_4$, and $\psi_{W_5}(xw) = \psi_{\wt{W}_4}(w)$.
Now we describe the structure of elements in $W_5$.
Elements in $W_5$ have the following form
\begin{equation}\label{lem3equ17}
w=\begin{pmatrix}
z & q_1 & q_2\\
0 & v' & q_1^*\\
0 & 0 & z^*
\end{pmatrix}
\begin{pmatrix}
I_{n-m} & 0 & 0\\
p_1 & I_{2m} & 0\\
0 & p_1^* & I_{n-m}
\end{pmatrix},
\end{equation}
where $z$ runs over the unipotent radical
of a parabolic subgroup
of $\GL_{n_1+k(2n_2+1)}$ with
Levi isomorphic to $\GL_{n_1-n_2+1} \times \GL_{2k+1}^{n_2} \times \GL_{k-1}$,
and $z_{i,j}=0$ for $n_1-n_2-1+n_2(2k+1) \leq i \leq n_1-n_2+1+n_2(2k+1)$ and
$n_1-n_2+2+n_2(2k+1) \leq j \leq n_1+k(2n_2+1)$;
$q_1 \in \Mat_{(n_1+k(2n_2+1)) \times (2m)}$
with certain conditions;
$p_1 \in \Mat_{(2m) \times (n_1+k(2n_2+1))}$
with certain conditions;
$q_2 \in \Mat'_{(n_1+k(2n_2+1)) \times (n_1+k(2n_2+1))}$
with $q_2(i,j) = 0$ for $n_1+2kn_2+2 \leq i \leq n_1+k(2n_2+1)$ and
$1 \leq j \leq k-1$, where
$$\Mat'_{(n_1+k(2n_2+1)) \times (n_1+k(2n_2+1))} \subset \Mat_{(n_1+k(2n_2+1)) \times (n_1+k(2n_2+1))}$$
consists of
matrices $q$
satisfying the property
that
$$q^t v_{n_1+k(2n_2+1)} - v_{n_1+k(2n_2+1)} q=0$$
with $v_{n_1+k(2n_2+1)}$ being a matrix only with
ones on the second diagonal; and finally
$v' \in \omega_1 \wt{Y}V_{[p_2^{e_2}\cdots p_r^{e_r}],2} \omega_1^{-1}$
with $\wt{Y}$ defined in \eqref{equy}
corresponding to the partition $[p_1^{e_1}\cdots p_s^{e_s}]$.
The character is given by
\begin{align}\label{lem3equ11}
\begin{split}
&\psi_{W_5}(w)\\
=  \ & \psi(\sum_{i=1}^{n_1-n_2-1} w_{i,i+1})
\psi(\sum_{j=n_1-n_2}^{n_1+2kn_2-k-1} w_{j,j+2k+1})\psi(\gamma w_{n_1-n_2+1, n_1-n_2+2k})\\
\cdot & \psi(\sum_{l=n_1+2kn_2-k}^{n_1+2kn_2-1} w_{l,l+2k+1+2m})
\psi(a_1 w_{n_1+2kn_2,n_1+2kn_2+2k+1+2m})\\
\cdot & \psi_{[p_1^{e_1}\cdots p_s^{e_s}],\ul{a}}(\omega_1^{-1} v' \omega_1).
\end{split}
\end{align}

%%%%%%%%%%%%%%%%%%%%%%%%%%%%%%%%%%%%%%%%%%%%%%%%%%%%%%%%%%
Let $i_0$ be the first row of $q_1$ such that $q_1(i_0,j) \equiv 0$ for some $j$. Assume that the number of $j$'s in the row $i_0 \leq i \leq n_1+2n_2k-2$ such that $q_1(i,j)\equiv 0$ is $s_i$.
For $i_0 \leq i \leq n_1+2n_2k-2$, let $X_j$'s,
$
(\sum_{k=i_0}^{i-1} s_k) +1 \leq j \leq \sum_{k=i_0}^{i} s_k,
$
be the one-dimensional subgroups such that the corresponding entries are in the $i$-th row of $q_1$ must be zero, from right to left.

For $i_0 \leq i \leq n_1+2n_2k-2$, let $Y_j$'s,
$(\sum_{k=i_0}^{i-1} s_k) +1 \leq j \leq \sum_{k=i_0}^{i} s_k,$
be the one-dimensional subgroups such that the corresponding entries are in the $(i+k+2)$-th column of $p_1$ not always zero, from bottom to top.
Write $\wt{W}_5$ be the subgroup of $W_5$ with $p_1$ part being zero, and denote $\psi_{\wt{W}_5} = \psi_{W_5}|_{\wt{W}_5}$.
Let $\ol{s} = \sum_{k=i_0}^{n_1+2n_2k-2} s_k$.

Consider the following sequence of quadruples
\begin{align}\label{lem3equ19}
\begin{split}
&(\prod_{i=2}^{\ol{s}} Y_{i} \wt{W}_5, \psi_{\wt{W}_5}, X_1, Y_1),\\
&(X_{1} \prod_{i=3}^{\ol{s}} Y_{i} \wt{W}_5, \psi_{\wt{W}_5}, X_{2}, Y_2),\\
&\cdots\\
&(\prod_{j=1}^{\ol{s}-1} X_{j} \wt{W}_5, \psi_{\wt{W}_5}, X_{\ol{s}}, Y_{\ol{s}}).
\end{split}
\end{align}
Applying Lemma \ref{nvequ} repeatedly, the integral
in \eqref{lem3equ9} is non-vanishing if and only if the
following integral is non-vanishing
\begin{equation}\label{lem3equ16}
\int_{[\prod_{j=1}^{\ol{s}} X_{j} \wt{W}_5]} \varphi(xw \omega_3 g) \psi_{\wt{W}_5}^{-1}(w) dxdw.
\end{equation}

Let $W_6=\prod_{j=1}^{\ol{s}} X_{j} \wt{W}_5$, and $\psi_{W_6}(xw) = \psi_{\wt{W}_5}(w)$.
Note that elements in $W_6$ have the following form
\begin{equation*}
w=\begin{pmatrix}
z & q_1 & q_2\\
0 & v' & q_1^*\\
0 & 0 & z^*
\end{pmatrix},
\end{equation*}
where the structures of $z, q_2, v'$ are the same as in \eqref{lem3equ17};
$$q_1 \in \Mat_{(n_1+k(2n_2+1)) \times (2m)}$$
such that no entries in the first $(n_1+2n_2k-2)$ rows are always zero.

If $q_1(n_1+2n_2k-1, 1:m)$ has entries that are not always zero, let $\alpha_i=e_{n_1+2n_2k-1}-e_{s_i}$, $1 \leq i \leq \ell$, $n-m+1 \leq s_i \leq n$, be all the roots such that the corresponding root subgroups in $q_1$ are not identically zero.
Let $\beta_i = e_{s_i}+e_{n_1+2n_2k+k}$, for $1 \leq i \leq \ell$.
Let $X = \prod_{i=1}^{\ell} X_{\alpha_i}$, $Y=\prod_{i=1}^{\ell} Y_{\beta_i}$.
Write $W_6 = X \wt{W}_6$ with $X \cap \wt{W}_6=\{I_{2n}\}$.
Let $\psi_{\wt{W}_6}=\psi_{{W_6}}|_{\wt{W}_6}$.
Consider the quadruple
$(\wt{W}_6, \psi_{\wt{W}_6}, X, Y)$.
By Lemma \ref{nvequ}, the integral
in \eqref{lem3equ16} is non-vanishing if and only if the
following integral is non-vanishing
\begin{equation}\label{lem3equ18}
\int_{[Y\wt{W}_6]} \varphi(yw \omega_3 g) \psi_{\wt{W}_6}^{-1}(w) dydw.
\end{equation}
Let $W_7=Y \wt{W}_6$, and $\psi_{W_7}(yw) = \psi_{\wt{W}_6}(w)$.

%%%%%%%%%%%%%%%%%%%%%%%%%%%%%%%%%%%%%%%%%%%%%%%%%%%%%%%%%%%

Define that
$A = \begin{pmatrix}
\frac{1}{2 \gamma} & 0 & \frac{1}{2}\\
0 & 1 & 0\\
-\frac{1}{2} & 0 & \frac{\gamma}{2}
\end{pmatrix}$,
$B = \diag(I_{2k-2}, A)$, and
$$C= \diag(I_{n_1-n_2+1}, B, B, \ldots, B),$$
with $n_2$-copies of $B$.
Let $\epsilon = (C, I_{2m+2k-2}, C^*)$.
It is easy to see that $\epsilon$ normalizes
$W_7$. Conjugating by $\epsilon$ from left, the integral in
\eqref{lem3equ9} becomes:
\begin{equation}\label{lem3equ12}
\int_{[W_8]} \varphi(w \epsilon \omega_3 g) \psi_{W_8}^{-1}(w) dw,
\end{equation}
where $W_8= \epsilon W_7 \epsilon^{-1}$,
$\psi_{W_8}(w)=\psi_{W_7}(\epsilon^{-1} w \epsilon)$.
It is easy to see that
\begin{align}\label{lem3equ13}
\begin{split}
&\psi_{W_8}(w)\\
=  \ & \psi(\sum_{i=1}^{n_1-n_2-1} w_{i,i+1})
\psi(\sum_{j=n_1-n_2+2}^{n_1+2kn_2-k-1} w_{j,j+2k+1})\\
\cdot &\psi(w_{n_1-n_2, n_1-n_2+2k+1})\psi(w_{n_1-n_2+1, n_1-n_2+2k})
\psi(\sum_{l=n_1+2kn_2-k}^{n_1+2kn_2-2} w_{l,l+2k+1+2m})\\
\cdot &
\psi(\frac{1}{\gamma} w_{n_1+2kn_2-1,n_1+2kn_2+2k+2+2m})\psi(-\gamma w_{n_1+2kn_2+1,n_1+2kn_2+2k+2m})\\
\cdot & \psi(a_1 w_{n_1+2kn_2,n_1+2kn_2+2k+1+2m})
\psi_{[p_1^{e_1}\cdots p_s^{e_s}],\ul{a}}(\omega_1^{-1} v' \omega_1).
\end{split}
\end{align}

%%%%%%%%%%%%%%%%%%%%%%%%%%%%%%%%%%%%%%%%%%%%%%%%%%%%%%%%%%
Let $\ul{p}' = [(2n_1)(2n_2+2)(2n_2+1)^{2k-2}(2n_2)p_{1}^{e_1} \cdots p_{s}^{e_s}]$.
Now, it is easy to see that there is a subgroup $Y'$ of $Y''$,
with $X'' \oplus Y''$ being a polarization
of $V_{\ul{p}', 1} / V_{\ul{p}', 2}$ and preserving the character
$\psi_{\ul{p}', \{a_1, \frac{1}{2}, -\frac{1}{2}\} \cup \ul{a}}$, such that after conjugating by a certain Weyl element
$\omega_4$ from the left, and considering similar quadruples as in \eqref{lem3equ19}, the following integral
is non-vanishing
\begin{equation}\label{lem3equ14}
\int_{[Y'V_{\ul{p}', 2}]} \varphi(vyg) \psi_{\ul{p}', \{a_1, \frac{1}{2}, -\frac{1}{2}\} \cup \ul{a}}^{-1}(v) dvdy,
\end{equation}
if and only if the integral in \eqref{lem3equ12} is non-vanishing.
By Corollary \ref{halfheisen}, the integral in \eqref{lem3equ14} is
non-vanishing if and only if the following integral is non-vanishing
\begin{equation}\label{lem3equ15}
\int_{[V_{\ul{p}', 2}]} \varphi(vg) \psi_{\ul{p}', \{a_1, \frac{1}{2}, -\frac{1}{2}\} \cup \ul{a}}^{-1}(v) dv,
\end{equation}
which is a Fourier coefficient attached to the partition
$$
\ul{p}' = [(2n_1)(2n_2+2)(2n_2+1)^{2k-2}(2n_2)p_{1}^{e_1} \cdots p_{s}^{e_s}].
$$
Therefore, $\pi$ has a nonzero Fourier coefficient
attached to the expected partition $[(2n_1)(2n_2+2)(2n_2+1)^{2k-2}(2n_2)p_{1}^{e_1} \cdots p_{s}^{e_s}]$.
This completes the proof of the lemma.
\end{proof}

%%%%%%%%%%%%%%%%%%%%%%%%%%%%%%%%%%%%%%
%%%%%%%%%%%%%%%%%%%%%%%%%%%%%%%%%%%%%%

\section{Refined Properties of Fourier Coefficients}

In this section, we assume that $\pi$ is an irreducible
cuspidal automorphic representation of $G_n(\BA)$. Theorem \ref{thm9} shows that every partition $\udl{p}\in\frak{n}^m(\pi)$ is special for $G_n(\BA) = \Sp_{2n}(\BA)$.
It is also generally believed that for any $\udl{p}\in\frak{n}^m(\pi)$ and for $\udl{a}$ such that $\pi$ has a nonzero
$\psi_{\udl{p},\udl{a}}$-Fourier coefficient, the stabilizer $\Stab_{L_{\udl{p}}}(\psi_{\udl{p},\udl{a}})$ should be an $F$-anisotropic
algebraic group. There is currently no direct global evidence to support this proposition, except a conjecture given by Ginzburg
\cite[Conjecture 4.3]{G06}. On the other hand, over a local field corresponding to each local place of $F$,
the work of M{\oe}glin and Waldspurger (\cite{MW87}) and of M{\oe}glin (\cite{M96}) for $p$-adic local fields and the main
result of B. Harris's PhD thesis under supervision of D. Vogan (\cite{H11}) for archimedean local fields strongly support this
conjecture.

We study here this conjecture for $G_n(\BA)=\Sp_{2n}(\BA)$ or $\wt{\Sp}_{2n}(\BA)$. It is clear that if one knows more about the set $\frak{n}^m(\pi)$, it would be easier
to understand the conjecture. As suggested from \cite{Ka87}, \cite{MW87}, \cite{M98}, and \cite{GRS03}, the set $\frak{n}^m(\pi)$
should contain only one partition. This is one of the problems concerning Fourier coefficients that we are going to study in our future work.
On the other hand, Ginzburg, Rallis and Soudry found in \cite{GRS03} a particular partition in $\frak{n}^m(\pi)$, which is denoted by
$\ul{p}(\pi)$ for all irreducible cuspidal automorphic representations $\pi$ of $G_n(\BA)$. This particular partition
$\ul{p}(\pi)$ has the properties that
\begin{enumerate}
\item[(1)] every part in $\ul{p}(\pi)$ is even, and
\item[(2)] it is ``maximal at every stage" in the sense of the proof of
Theorem 2.7 of \cite{GRS03}.
\end{enumerate}
The property that $\ul{p}(\pi)\in \frak{n}^m(\pi)$ is ``maximal at every stage" will also play a crucial role in our investigation of the
conjecture below. In the following remark, we briefly explain the meaning of being ``maximal at every stage".

\begin{rmk}\label{rmk2}
For any irreducible automorphic representation $\pi$ of $G_n(\BA)$,
the descent of $\pi$ with respect to
the partition $[(2n_1)1^{2n-2n_1}]$ and
character $\psi_{[(2n_1)1^{2n-2n_1}], a_1}$ is defined on page 4 of \cite{GRS03}, and Section 3.3 of \cite{GRS11}. Denote it by $FJ_{\psi_{[(2n_1)1^{2n-2n_1}], a_1}}(\pi)$.
Then by Lemma \ref{lem1},
$\pi$ has a nonzero $\psi_{\ul{p}, \{a_1\} \cup \ul{a}}$-Fourier
coefficient attached to $\ul{p}=[(2n_1)p_2^{e_2}\cdots p_r^{e_r}]$
if and only if $FJ_{\psi_{[(2n_1)1^{2n-2n_1}], a_1}}(\pi)$ is
nonzero and it has a nonzero $\psi_{[p_2^{e_2}\cdots p_r^{e_r}], \ul{a}}$-Fourier
coefficient attached to the partition $[p_2^{e_2}\cdots p_r^{e_r}]$.

Note that if $\pi$ is also cuspidal,
and $\ul{p}(\pi)=[(2n_1)p_2\cdots p_r] \in \mathfrak{n}^m(\pi)$,
then Lemma \ref{lem1} implies that
$FJ_{\psi_{[(2n_1)1^{2n-2n_1}], a_1}}(\pi)$
is a nonzero and has a nonzero $\psi_{[p_2\cdots p_r], \ul{a}}$-Fourier
coefficient attached to the partition $[p_2\cdots p_r]$.
Moreover, by Lemma 2.3 of \cite{GRS03} and the property of ``maximal at evey stage" of $\ul{p}(\pi)$ (see the proof of Theorem 2.7 of \cite{GRS03}),
$FJ_{\psi_{[(2n_1)1^{2n-2n_1}], a_1}}(\pi)$
is a cuspidal representation of $\widetilde{G}_{n-n_1}(\BA)$ with
\begin{equation}\label{sec3equ1}
\ul{p}(FJ_{\psi_{[(2n_1)1^{2n-2n_1}], a_1}}(\pi)) =
[p_2\cdots p_r] \in \mathfrak{n}^m(FJ_{\psi_{[(2n_1)1^{2n-2n_1}], a_1}}(\pi)),
\end{equation}
where $\widetilde{G}_{n-n_1}(\BA) = \tilsp_{2n-2n_1}(\BA)$ if
$G_n(\BA) = \Sp_{2n}(\BA)$, $\widetilde{G}_{n-n_1}(\BA) = \Sp_{2n-2n_1}(\BA)$ if
$G_n = \tilsp_{2n}(\BA)$.
\end{rmk}

If we assume that the set $\frak{n}^m(\pi)$ contains only one partition, then we only need to prove that
the stabilizer $\Stab_{L_{\udl{p}}}(\psi_{\udl{p},\udl{a}})$ is $F$-anisotropic for $\udl{p}=\udl{p}(\pi)$.
\begin{thm}\label{main}
Let $\pi$ be an irreducible cuspidal automorphic representation of $\Sp_{2n}(\BA)$ or $\wt{\Sp}_{2n}(\BA)$. Assume that the set $\frak{n}^m(\pi)$ contains only one partition.
Then for any data $\udl{a}$ such that $\pi$ has a nonzero $\psi_{\udl{p},\udl{a}}$-Fourier coefficient with $\udl{p}\in \frak{n}^m(\pi)$,
the stabilizer $\Stab_{L_{\udl{p}}}(\psi_{\udl{p},\udl{a}})$ is $F$-anisotropic.
\end{thm}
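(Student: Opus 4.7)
The plan is to induct on $n$, using the Fourier--Jacobi descent to transfer the question to a strictly smaller cuspidal representation. By Corollary \ref{cor1} the partition $\udl{p}$ is special, and the uniqueness hypothesis forces it to coincide with the particular partition $\ul{p}(\pi)$ constructed in Theorem 2.7 of \cite{GRS03}, all of whose parts are even. Write $\udl{p} = [(2n_1)(2n_2)\cdots(2n_r)]$ with $n_1 \geq \cdots \geq n_r > 0$; it suffices to prove $F$-anisotropy of $\Stab_{L_{\udl{p}}}(\psi_{\udl{p},\udl{a}})$ in this all-even setting. The base case $r = 1$, i.e.\ $\udl{p} = [2n]$ (the regular nilpotent orbit), is immediate: the stabilizer of a Whittaker character in the maximal torus of $\Sp_{2n}$ is the center $\mu_2$, which is $F$-anisotropic.

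For the inductive step, apply Lemma \ref{lem1} with the leading part $2n_1$: the nonzero $\psi_{\udl{p},\udl{a}}$-Fourier coefficient of $\pi$ corresponds to a nonzero $\psi_{[(2n_2)\cdots(2n_r)],\udl{a}'}$-Fourier coefficient of the descent $\sigma := FJ_{\psi_{[(2n_1)1^{2n-2n_1}],a_1}}(\pi)$ on $\widetilde{G}_{n-n_1}(\BA)$, where $\udl{a}'$ denotes the remaining character data. By Remark \ref{rmk2}, $\sigma$ is cuspidal and $[(2n_2)\cdots(2n_r)]$ is the unique element of $\mathfrak{n}^m(\sigma)$. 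Hence the induction hypothesis applied to $\sigma$ yields that the stabilizer of $\psi_{[(2n_2)\cdots(2n_r)],\udl{a}'}$ in $L_{[(2n_2)\cdots(2n_r)]}$ is $F$-anisotropic.

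Anisotropy then transfers back to $L_{\udl{p}}$ via the natural block structure of the Levi. The stabilizer $\Stab_{L_{\udl{p}}}(\psi_{\udl{p},\udl{a}})$ fits in a short exact sequence whose ``inner'' factor is $\Stab_{L_{[(2n_2)\cdots(2n_r)]}}(\psi_{[(2n_2)\cdots(2n_r)],\udl{a}'})$, anisotropic by induction, and whose ``outer'' factor is the stabilizer of a Whittaker-type character on the leading $\GL_{n_1}$-block, which is again just the relevant center and is $F$-anisotropic. Since an extension of $F$-anisotropic groups by $F$-anisotropic groups is $F$-anisotropic, the full stabilizer is $F$-anisotropic, completing the induction.

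The main obstacle will be rigorously handling repeated parts and the associated permutation symmetry among the $\GL$-blocks of $L_{\udl{p}}$, since in that case the stabilizer acquires a classical group factor (orthogonal or symplectic) whose $F$-rational structure depends on $\udl{a}$ and is not automatically $F$-anisotropic. If such a factor contained a nontrivial $F$-split element, one would use it to produce an $F$-rational root subgroup $X_\alpha$ which normalizes $V_{\udl{p},2}$ and preserves $\psi_{\udl{p},\udl{a}}$, and Fourier-expand $\varphi^{\psi_{\udl{p},\udl{a}}}$ along $[X_\alpha]$: the non-trivial Fourier modes would attach to symplectic partitions strictly greater than $\udl{p}$ (invoking Theorem \ref{thm9} to pass to $G$-expansions if necessary), contradicting the maximality of $\udl{p}\in \mathfrak{n}^m(\pi)$, while the trivial mode, under the descent identification with $\sigma$ via Lemma \ref{lem1}, becomes an integration over the unipotent radical of a proper parabolic subgroup of $\widetilde{G}_{n-n_1}$, and hence vanishes by the cuspidality of $\sigma$. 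The resulting contradiction with the non-vanishing of the Fourier coefficient forces the full stabilizer to be $F$-anisotropic.
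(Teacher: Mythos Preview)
Your inductive argument has a genuine gap when the leading part has multiplicity $s_1\ge 2$. Writing distinct parts as $(2n_1)^{s_1}\cdots(2n_k)^{s_k}$, the stabilizer is a product of orthogonal groups $O_1\times\cdots\times O_k$, with $O_j$ attached to the diagonal form on the $j$-th block. After one descent the first factor shrinks to the orthogonal group of the form with $a_1$ deleted; but $O(\diag(a_1,\ldots,a_{s_1}))$ is \emph{not} an extension of $O(\diag(a_2,\ldots,a_{s_1}))$ by an $F$-anisotropic group---adjoining one variable to an anisotropic quadratic form can very well make it isotropic. So anisotropy for $\sigma$ does not transfer back to $\pi$, and your ``extension of anisotropic by anisotropic'' sentence is simply false in the repeated-part case. (There is also a smaller problem: Remark~\ref{rmk2} does not assert that $\mathfrak{n}^m(\sigma)$ is a singleton, so your induction hypothesis need not apply to $\sigma$ at all.) Your sketched repair---Fourier-expanding along a root subgroup $X_\alpha$ of the split orthogonal factor---is in the right spirit but the claims about the modes are unjustified: the non-trivial modes are characters on $X_\alpha V_{\udl{p},2}$ with $X_\alpha\subset L_{\udl{p}}$, and there is no mechanism identifying these with $\psi_{\udl{p}',\udl{a}'}$-coefficients for a \emph{larger} symplectic partition $\udl{p}'$; nor does the trivial mode become, via Lemma~\ref{lem1}, a constant term of $\sigma$ along a proper parabolic.

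The paper's proof avoids induction on $n$ entirely. It argues by contradiction: if some $O_j$ is $F$-isotropic, the iterated descent of Remark~\ref{rmk2} (using only cuspidality and ``maximal at every stage'', never any uniqueness hypothesis for the descent) reduces to $j=1$, and one may then normalize $(a_1,a_2)=(1,-1)$. The heart of the argument is an explicit computation showing that every $\psi_{[(2n_1)^21^{2n-4n_1}],\{1,-1\}}$-Fourier coefficient of $\pi$ vanishes. Conjugation by an element $\epsilon$ built from the blocks $\begin{pmatrix}1&-1\\1&1\end{pmatrix}$ (which diagonalizes the hyperbolic pair $\{1,-1\}$), followed by a Weyl element and a chain of Lemma~\ref{nvequ} swaps, exhibits an inner integral $\int_{[U_{n_1}]}\varphi(ug)\psi_{U_{n_1}}^{-1}(u)\,du$ with a degenerate Whittaker character along the radical of the $(\GL_1^{n_1}\times G_{n-n_1})$-parabolic. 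By Lemma~2.2 of \cite{GRS03}, non-vanishing of this integral would force a nonzero Fourier coefficient attached to $[(2m)1^{2n-2m}]$ for some $m>n_1$, contradicting the maximality of $2n_1$. This explicit $\epsilon$-conjugation and the appeal to Lemma~2.2 of \cite{GRS03} are precisely the ingredients your sketch is missing.
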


\begin{proof}
We may write an even symplectic partition as constructed in \cite{GRS03} as
$$
\udl{p}(\pi)=[(2n_1)^{s_1}(2n_2)^{s_2}\cdots(2n_k)^{s_k}] \in \mathfrak{n}^m(\pi_1)
$$
with $n_1 > n_2 > \cdots > n_k$, $s_i \geq 1$. Then the stabilizer of
$\psi_{\underline{p}, \underline{a}}$ in $L_{\underline{p}}$
is isomorphic to
$$
O_1 \times O_2 \times \cdots \times O_k,
$$
where $O_j$ is the orthogonal group with respect to
the quadratic form $\diag(a_{\sum_{i=1}^{j-1}s_i +1}, a_{\sum_{i=1}^{j-1}s_i +2},
\ldots, a_{\sum_{i=1}^{j}s_i})$ for $1 \leq j \leq k$. It is enough to show that
for $\udl{p}=\udl{p}(\pi)$, all the quadratic forms
\begin{equation}
\diag(a_{\sum_{i=1}^{j-1}s_i +1}, a_{\sum_{i=1}^{j-1}s_i +2},
\ldots, a_{\sum_{i=1}^{j}s_i})
\end{equation}
for $1 \leq j \leq k$ are $F$-anisotropic.

We will prove this last statement by contradiction.
First, we claim that without loss of generality, we
may assume that the quadratic form
$$
\diag(a_{1}, a_{2},
\ldots, a_{s_1})
$$
is $F$-isotropic.

Indeed assume that
there exists $1 < j \leq k$ such that the quadratic form
$$
\diag(a_{\sum_{i=1}^{j-1}s_i +1}, a_{\sum_{i=1}^{j-1}s_i +2},
\ldots, a_{\sum_{i=1}^{j}s_i})
$$
is $F$-isotropic.
Let $\pi_1 = FJ_{\psi_{[(2n_1)1^{2n-2n_1}], a_1}}(\pi)$ be
the descent of $\pi$ with respect to
the partition $[(2n_1)1^{2n-2n_1}]$ and
character $\psi_{[(2n_1)1^{2n-2n_1}], a_1}$ as in \cite{GRS11}.
By assumption, $\pi$ has a nonzero
$\psi_{\underline{p}(\pi), \underline{a}}$-Fourier coefficient.
From Lemma \ref{lem1} and Remark \ref{rmk2},
$\pi_1$ is cuspidal and nonzero, and
\begin{equation}\label{sec4equ1}
\ul{p}(\pi_1) = [(2n_1)^{s_1-1}(2n_2)^{s_2}\cdots(2n_k)^{s_k}] \in \mathfrak{n}^m(\pi_1).
\end{equation}

We repeat the above procedure of doing descent and get a sequence of
nonzero cuspidal representations (assuming that $s_1 \geq 2$):
\begin{align}\label{sec4equ2}
\begin{split}
\pi_2 & = FJ_{\psi_{[(2n_1)1^{2n-4n_1}], a_2}}(\pi_1),\\
& \cdots\\
\pi_{\sum_{i=1}^{j-1}s_i} & =
FJ_{\psi_{[(2n_{j-1})1^{2n-2\sum_{i=1}^{j-1} n_is_i}], a_2}}(\pi_{\sum_{i=1}^{j-1}s_i-1}),
\end{split}
\end{align}
with
\begin{equation}\label{sec4equ3}
\ul{p}(\pi_{\sum_{i=1}^{j-1}s_i}) = [(2n_j)^{s_j}(2n_{j+1})^{s_{j+1}}\cdots(2n_k)^{s_k}]
\in \mathfrak{n}^m(\pi_{\sum_{i=1}^{j-1}s_i}).
\end{equation}
Then, we can start to prove the theorem by considering
$\pi_{\sum_{i=1}^{j-1}s_i}$ which is cuspidal, with
$$
\ul{p}(\pi_{\sum_{i=1}^{j-1}s_i}) = [(2n_j)^{s_j}(2n_{j+1})^{s_{j+1}}\cdots(2n_k)^{s_k}]
\in \mathfrak{n}^m(\pi_{\sum_{i=1}^{j-1}s_i}).
$$
Note that if $s_1=1$, then
$$
\pi_2 = FJ_{\psi_{[(2n_2)1^{2n-2n_1-2n_2}], a_2}}(\pi_1).
$$
Therefore, we have proved the claim.

Now, we assume that the quadratic form
$$
\diag(a_{1}, a_{2},
\ldots, a_{s_1})
$$
is $F$-isotropic.
Note that this implies that $s_1 \geq 2$.

Since every isotropic quadratic space is universal and
contains a hyperbolic plane, we can assume that $a_1 = 1$ and
$a_2 = -1$. We are going to show that as a cuspidal representation,
$\pi$ has no nonzero
$\psi_{\ul{p}(\pi), \{\ul{a}\}}$-Fourier coefficients,
with $\ul{a} = \{1,-1,a_3, \ldots, a_r\}$, $a_i \in F^*/(F^*)^2$ and $r = \sum_{i=1}^k s_k$. On the other hand, by the assumption of the theorem,
$\pi$ has a nonzero
$\psi_{\ul{p}(\pi), \{\ul{a}\}}$-Fourier coefficient, which is a contradiction.

By Lemma \ref{lem1}, it is enough to
show that any
$\psi_{[(2n_1)^2 1^{2n-4n_1}], \{1,-1\}}$-Fourier coefficient of $\pi$,
attached to the partition $[(2n_1)^2 1^{2n-4n_1}]$, is identically zero.
Note that by the property of ``maximal at every stage"
of $\ul{p}(\pi)$, see the proof of Theorem 2.7 of \cite{GRS03},
for any $n \geq m > n_1$, $\pi$ has no nonzero
Fourier coefficients attached to the partition $[(2m)1^{2n-2m}]$.

For any $\varphi \in \pi$, its
$\psi_{[(2n_1)^2 1^{2n-4n_1}], \{1,-1\}}$-Fourier coefficient
is defined as follows
\begin{align}\label{sec4equ4}
\begin{split}
& \varphi^{\psi_{[(2n_1)^2 1^{2n-4n_1}], \{1,-1\}}}(g)\\
=  \ & \int_{[V_{[(2n_1)^2 1^{2n-4n_1}], 2}]} \varphi(vg)
\psi_{[(2n_1)^2 1^{2n-4n_1}], \{1,-1\}}^{-1}(v) dv.
\end{split}
\end{align}

By Corollary \ref{halfheisen},
the $\psi_{[(2n_1)^2 1^{2n-4n_1}], \{1,-1\}}$-Fourier coefficient
of $\varphi$ in \eqref{sec4equ4} is non-vanishing
if and only if the following integral
is non-vanishing:
\begin{align}\label{sec4equ5}
\begin{split}
& \varphi^{\psi_{[(2n_1)^2 1^{2n-4n_1}], \{1,-1\}}}(g)\\
=  \ & \int_{[YV_{[(2n_1)^2 1^{2n-4n_1}], 2}]} \varphi(vyg)
\psi_{[(2n_1)^2 1^{2n-4n_1}], \{1,-1\}}^{-1}(v) dvdy,
\end{split}
\end{align}
where $Y$ is the group defined
in \eqref{equy} corresponding to
the partition $[(2n_1)^2 1^{2n-4n_1}]$.

Recall from \eqref{equtorus} that the one-dimensional toric
subgroup $\CH_{[(2n_1)^2 1^{2n-4n_1}]}$ corresponding to the partition
$[(2n_1)^2 1^{2n-4n_1}]$ has elements as follows
\begin{equation}\label{sec4equ6}
\mathcal{H}_{[(2n_1)^2 1^{2n-4n_1}]}(t):=
\diag(T_1; T_2; I_{2n-4n_1}; (T_2)^*; (T_1)^*),
\end{equation}
where
$$T_1 = \diag(t_1^{2n_1-1}, t_1^{2n_1-3}, \ldots, t_1),$$
$$T_2 = \diag(t_2^{2n_1-1}, t_2^{2n_1-3}, \ldots, t_2),$$
and actually $t_1 = t_2 = t$, we just label them to distinguish
the positions.

Let $\omega_1$ be a Weyl element sending the one-dimensional toric
subgroup $\CH_{[(2n_1)^2 1^{2n-4n_1}]}$ above to the following
one-dimensional toric subgroup
\begin{equation}\label{sec4equ7}
\widetilde{\mathcal{H}}_{[(2n_1)^2 1^{2n-4n_1}]}:=
\{\diag(\widetilde{T}_1; I_{2n-4n_1}; (\widetilde{T}_1)^*)\},
\end{equation}
where
$$\widetilde{T}_1 = \diag(t_1^{2n_1-1}, t_2^{2n_1-1};
t_1^{2n_1-3}, t_2^{2n_1-3}; \ldots; t_1, t_2).$$

Conjugating by $\omega_1$,
the integral in \eqref{sec4equ5} becomes
\begin{align}\label{sec4equ8}
\begin{split}
\int_{[U]} \varphi(u \omega_1 g)
\psi_{U}^{-1}(u) du,
\end{split}
\end{align}
where
\begin{eqnarray*}
U &=& \omega_1 YV_{[(2n_1)^2 1^{2n-4n_1}],2} \omega_1^{-1},\\
\psi_U(u)&=&\psi_{[(2n_1)^2 1^{2n-4n_1}], \{1,-1\}}(\omega_1^{-1} u \omega_1),
\end{eqnarray*}
and elements in $U$ have the following form
\begin{equation}\label{sec4equ9}
\begin{pmatrix}
z & q_1 & q_2\\
0 & I_{2n-4n_1} & q_1^*\\
0 & 0 & z^*
\end{pmatrix}
\end{equation}
where $z$ in unipotent radical of the parabolic
subgroup of $\GL_{2n_1}$ with Levi isomorphic to
$\GL_2 \times \GL_2 \times \cdots \times \GL_2$;
$q_1 \in \Mat_{(2n_1) \times (2n-4n_1)}$ with
$q_1(i,j)=0$ for $i=2n_1-1, 2n_1$ and $1 \leq j \leq n-2n_1$;
$q_2 \in \Mat_{(2n_1) \times (2n_1)}'$, which consists of
matrices $q$ in $\Mat_{(2n_1) \times (2n_1)}$ satisfying the property
that $q^t v_{2n_1} - v_{2n_1} q=0$, where $v_{2n_1}$ is a matrix only with
ones on the second diagonal.
The character is given by
\begin{align}\label{sec4equ10}
\begin{split}
& \psi_U\left(\begin{pmatrix}
z & q_1 & q_2\\
0 & I_{2n-4n_1} & q_1^*\\
0 & 0 & z^*
\end{pmatrix}\right)\\
=  \ & \psi(\sum_{i=1}^{2n_1-2} z_{i, i+2} + q_2(2n_1-1,2) - q_2(2n_1, 1)).
\end{split}
\end{align}

Let $A = \begin{pmatrix}
1 & -1\\
1 & 1
\end{pmatrix}$, and $\epsilon = \diag(1, A, \ldots, A, I_{2n-4k-2}, A^*, \ldots, A^*, 1)$.
Conjugating by $\epsilon$, the integral
in \eqref{sec4equ8} becomes:
\begin{equation}\label{sec4equ11}
\int_{[U^{\epsilon}]}
\varphi(u \epsilon \omega_1 g) \psi_{U^{\epsilon}}^{-1}(u)
du,
\end{equation}
where $U^{\epsilon}:= \epsilon U \epsilon^{-1}$, and
for $u \in U^{\epsilon}$, $\psi_{U^{\epsilon}}(u) = \psi_U(\epsilon^{-1} u \epsilon)$.

Elements $u \in U^{\epsilon}$ have the following form:
\begin{equation}\label{sec4equ12}
\begin{pmatrix}
z & q_1 & q_2\\
0 & I_{2n-4n_1} & q_1^*\\
0 & 0 & z^*
\end{pmatrix},
\end{equation}
with the same structure as elements in \eqref{sec4equ10}.
And
\begin{align}\label{sec4equ13}
\begin{split}
& \psi_{U^{\epsilon}}\left(\begin{pmatrix}
z & q_1 & q_2\\
0 & I_{2n-4n_1} & q_1^*\\
0 & 0 & z^*
\end{pmatrix}\right) \\
=  \ & \psi(\sum_{i=1}^{2n_1-2} z_{i, i+2} + q_2(2n_1-1,1))).
\end{split}
\end{align}

Let $\omega_2$ be a Weyl element which sends
the one-dimensional toric subgroup in \eqref{sec4equ7}
to the following one-dimensional toric subgroup:
\begin{equation}\label{sec4equ14}
\{\diag(T'; I_{2n-4n_1}; T'')\},
\end{equation}
where
$$T'=\diag(t_1^{2n_1-1}, t_1^{2n_1-3}, \ldots, t_1, t_2^{-1},
t_2^{-3}, \ldots, t_2^{1-2n_1}),$$
and
$$T''=\diag(t_2^{2n_1-1}, t_2^{2n_1-3}, \ldots, t_2, t_1^{-1},
t_1^{-3}, \ldots, t_1^{1-2n_1}).$$

Conjugating by $\omega_2$, the integral in \eqref{sec4equ11}
becomes:
\begin{equation}\label{sec4equ15}
\int_{[W]}
\varphi(w \omega_2 \epsilon \omega_1 g) \psi_{W}^{-1}(w)
dw,
\end{equation}
where $W:= \omega_2 U^{\epsilon} \omega_2^{-1}$, and
for $w \in W$, $\psi_{W}(w) = \psi_{U^{\epsilon}}(\omega_2^{-1} w \omega_2)$.

Elements $w \in W$ have the following form:
\begin{equation}\label{sec4equ16}
\begin{pmatrix}
z & q_1 & q_2\\
0 & I_{2n-4n_1} & q_1^*\\
0 & 0 & z^*
\end{pmatrix}
\begin{pmatrix}
I_{2n_1} & 0 & 0\\
p_1 & I_{2n-4n_1} & 0\\
p_2 & p_1^* & I_{2n_1}
\end{pmatrix},
\end{equation}
where $z\in N_{2n_1}$; $q_1 \in \Mat_{(2n_1) \times (2n-4n_1)}$ and
$q_1(i,j)=0$ for all $i \geq n_1 +1$
or for $i=n_1$ and $1 \leq j \leq n-2n_1$;
$q_2, p_2 \in \Mat_{(2n_1) \times (2n_1)}'$ and
$q_2(i,j)=p_2(i,j)=0$ for $i \geq j$; and
$p_1 \in \Mat_{(2n-4n_1) \times (2n_1)}$ and
$p_1(i,j)=0$ for all $j \leq n_1$
or for $j=n_1+1$ and $n-2n_1+1 \leq i \leq 2n-4n_1$.
After conjugating by a suitable toric element to adjust the signs, the character $\psi_W$ can be given by
\begin{align}\label{sec4equ17}
\begin{split}
\psi_{W}\left(\begin{pmatrix}
z & q_1 & q_2\\
0 & I_{2n-4n_1} & q_1^*\\
0 & 0 & z^*
\end{pmatrix}\right)
=  \ & \psi(\sum_{i=1}^{2n_1-1} z_{i,i+1}).
\end{split}
\end{align}

Let $R = \prod_{i=1}^{n-2n_1} X_{\alpha_i}$,
where $X_{\alpha_i}$ is the one-dimensional root subgroup
corresponding to the root $\alpha_i = e_{n_1} - e_{2n_1+i}$,
for $1 \leq i \leq n-2n_1$.
Let $C = \prod_{i=1}^{n-2n_1} X_{\beta_i}$,
where $X_{\beta_i}$ is the one-dimensional root subgroup
corresponding to the root $\beta_i = e_{2n_1+i} - e_{n_1+1}$,
for $1 \leq i \leq n-2n_1$.

For $1 \leq i \leq n_1$, $1 \leq j \leq i$, define that
$\gamma_{i,j} = e_i + e_{2n_1 - i+j}$, and
$\delta_{i,j} = - e_{2n_1 - i+j} - e_{i+1}$.
Let $X_{\gamma_{i,j}}$, $X_{\delta_{i,j}}$
be the corresponding one-dimensional root subgroups.
Let $W = C \prod_{i=1}^{n_1} \prod_{j=1}^i X_{\delta_{i,j}} \widetilde{W}$,
with $C \prod_{i=1}^{n_1} \prod_{j=1}^i X_{\delta_{i,j}} \cap \widetilde{W} = \{1\}$,
and $\psi_{\widetilde{W}} = \psi_W|_{\widetilde{W}}$.

First, consider the following quadruple:
\begin{equation}\label{sec4equ18}
(\prod_{i=1}^{n_1} \prod_{j=1}^i X_{\delta_{i,j}} \widetilde{W},
\psi_{\widetilde{W}}, R, C).
\end{equation}
It is easy to see that it satisfies all the conditions
Lemma \ref{nvequ}. Therefore, by
Lemma \ref{nvequ}, the integral in
\eqref{sec4equ15} is non-vanishing if and only if
the following integral is non-vanishing:
\begin{equation}\label{sec4equ19}
\int_{[R\prod_{i=1}^{n_1} \prod_{j=1}^i X_{\delta_{i,j}} \widetilde{W}]}
\varphi(wx\omega_2 \epsilon \omega_1 g) \psi_{\widetilde{W}}^{-1}(w)
dw dx.
\end{equation}

Then, for $i=1$, we consider the following quadruple:
\begin{equation}\label{sec4equ20}
(R\prod_{i=2}^{n_1} \prod_{j=1}^i X_{\delta_{i,j}} \widetilde{W},
\psi_{\widetilde{W}}, X_{\gamma_{1,1}}, X_{\delta_{1,1}}),
\end{equation}
which also satisfies all the conditions of Lemma \ref{nvequ}.
By Lemma \ref{nvequ}, the integral in
\eqref{sec4equ19} is non-vanishing if and only if the following
integral is non-vanishing:
\begin{equation}\label{sec4equ21}
\int_{[R X_{\gamma_{1,1}}\prod_{i=2}^{n_1} \prod_{j=1}^i X_{\delta_{i,j}} \widetilde{W}]}
\varphi(wx\omega_2 \epsilon \omega_1 g) \psi_{\widetilde{W}}^{-1}(w)
dw dx.
\end{equation}

Next, we repeat the above procedure for $i=2, 3, \ldots, n_1$,
and for each $2 \leq i \leq n_1$, we consider the following sequence of
quadruples:
\begin{align*}
& (R\prod_{k=1}^{i-1} \prod_{j=1}^k X_{\gamma_{k,j}}\prod_{k=i+1}^{n_1} \prod_{j=2}^k X_{\delta_{k,j}} \widetilde{W},
\psi_{\widetilde{W}}, X_{\gamma_{i,1}}, X_{\delta_{i,1}}),\\
& (R X_{\gamma_{i,1}}\prod_{k=1}^{i-1} \prod_{j=1}^k X_{\gamma_{k,j}}\prod_{k=i+1}^{n_1} \prod_{j=2}^k X_{\delta_{k,j}} \widetilde{W},
\psi_{\widetilde{W}}, X_{\gamma_{i,2}}, X_{\delta_{i,2}}),\\
& \cdots,\\
& (R \prod_{j=1}^{i-1}X_{\gamma_{i,j}}\prod_{k=1}^{i-1} \prod_{j=1}^k X_{\gamma_{k,j}}\prod_{k=i+1}^{n_1} \widetilde{W},
\psi_{\widetilde{W}}, X_{\gamma_{i,i}}, X_{\delta_{i,i}}).
\end{align*}
After applying Lemma \ref{nvequ} to all these
quadruples one-by-one, the integral in
\eqref{sec4equ21} is
non-vanishing if and only if the following
integral is non-vanishing:
\begin{equation}\label{sec4equ22}
\int_{[R \prod_{i=1}^{n_1} \prod_{j=1}^i X_{\gamma_{i,j}} \widetilde{W}]}
\varphi(wx\omega_2 \epsilon \omega_1 g) \psi_{\widetilde{W}}^{-1}(w)
dw dx.
\end{equation}

It is easy to see that the integral in \eqref{sec4equ22}
has an inner integral of the following form:
\begin{equation}\label{sec4equ23}
\int_{[U_{n_1}]} \varphi(ug) \psi_{U_{n_1}}^{-1}(u) du,
\end{equation}
where $U_{n_1}$ is the unipotent radical of the parabolic subgroup
of $G_n$ with Levi isomorphic to $\GL_1^{n_1} \times G_{n-n_1}$,
and
$$
\psi_{U_{n_1}}(u)=\psi(\sum_{i=1}^{n_1} u_{i,i+1}).
$$

By Lemma 2.2 of \cite{GRS03}, if the integral in \eqref{sec4equ23}
is non-vanishing, then there exists $n_1 < m \leq n$, such that
$\pi$ has a non-zero Fourier coefficient attached to the
partition $[(2m)1^{2n-2m}]$. But, this contradicts the
``maximal at every stage" property of $\ul{p}(\pi)$.
Therefore, the integral in \eqref{sec4equ23} is identically
zero. Hence, from the above discussion, we conclude that
$\pi$ has no nonzero
$\psi_{[(2n_1)^2 1^{2n-4n_1}], \{1,-1\}}$-Fourier coefficients
attached to the partition $[(2n_1)^2 1^{2n-4n_1}]$.
Then, by Lemma \ref{lem1}, $\pi$
has no nonzero
$\psi_{\ul{p}(\pi), \{\ul{a}\}}$-Fourier coefficients,
with $\ul{a} = \{1,-1,a_3, \ldots, a_r\}$ for all $a_i\in F^*/(F^*)^2$,
which contradicts the assumption of the Theorem. This completes the proof of the theorem.
\end{proof}

Theorem \ref{main} has the following application that the type of Fourier coefficients could be restricted when the
underground number field $F$ is totally imaginary. Further applications to the discrete spectrum of square-integrable automorphic forms
based on the Arthur classification will be addressed in our future work.

\begin{thm}\label{thm1}
Assume that the number field $F$ is totally imaginary. Let $\pi$ be an irreducible cuspidal automorphic
representation of $\Sp_{2n}(\BA)$, or $\tilsp_{2n}(\BA)$, with the even symplectic partition
constructed in Theorem 2.7 of \cite{GRS03}
$$
\ul{p}(\pi)=[(2n_1)^{s_1}(2n_2)^{s_2}\cdots(2n_k)^{s_k}] \in \mathfrak{n}^m(\pi),
$$
where $n_1 > n_2 > \cdots > n_k$.
Then the inequality $s_i \leq 4$ holds for $i=1,2,\cdots, k$.
\end{thm}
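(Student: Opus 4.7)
The strategy is to derive Theorem \ref{thm1} as a direct consequence of Theorem \ref{main} combined with the classical bound on dimensions of anisotropic quadratic forms over a totally imaginary number field.

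First I would apply Theorem \ref{main} to the representation $\pi$ and its even partition $\ul{p}(\pi)=[(2n_1)^{s_1}(2n_2)^{s_2}\cdots(2n_k)^{s_k}]$ constructed in \cite{GRS03}: since the uniqueness hypothesis of Theorem \ref{main} is in force and $\ul{p}(\pi)\in\mathfrak{n}^m(\pi)$, for any choice of $\ul{a}$ yielding a nonzero $\psi_{\ul{p}(\pi),\ul{a}}$-Fourier coefficient the stabilizer $\Stab_{L_{\ul{p}(\pi)}}(\psi_{\ul{p}(\pi),\ul{a}})$ is $F$-anisotropic. Next, I would invoke the explicit description of this stabilizer recorded at the beginning of the proof of Theorem \ref{main}: because the distinct part sizes $2n_1,\ldots,2n_k$ are all even, each block of equal parts contributes an independent orthogonal factor and
$$\Stab_{L_{\ul{p}(\pi)}}(\psi_{\ul{p}(\pi),\ul{a}}) \;\cong\; O_1\times O_2\times\cdots\times O_k,$$
where $O_j$ is the orthogonal group of the diagonal quadratic form $q_j=\langle a_{\sum_{i<j}s_i+1},\ldots,a_{\sum_{i\le j}s_i}\rangle$ of rank $s_j$ over $F$. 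The $F$-anisotropy of the product is equivalent to the $F$-anisotropy of each individual form $q_j$.

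The last step is a purely arithmetic input. Because $F$ is totally imaginary, every archimedean completion of $F$ is $\BC$, and every quadratic form over $\BC$ of dimension at least $2$ is isotropic. At any non-archimedean place $v$, a standard argument using the discriminant and Hasse--Witt invariant shows that an anisotropic quadratic form over $F_v$ has dimension at most $4$. By the Hasse--Minkowski principle, a form over $F$ is isotropic if and only if it is isotropic at every completion; consequently any anisotropic quadratic form over a totally imaginary number field $F$ has dimension at most $4$. Applied to each $q_j$ this forces $s_j\le 4$ for every $j$, which is the desired conclusion. The only conceptual obstacle lies in the uniqueness hypothesis inherited from Theorem \ref{main}, which the authors flag as a separate problem for future work; once that is granted, the passage from anisotropy to the numerical bound $s_j\le 4$ is an elementary consequence of Hasse--Minkowski.
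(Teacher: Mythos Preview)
Your proposal is correct and follows essentially the same approach as the paper's own proof: both derive the bound $s_j\le 4$ by combining Theorem \ref{main} (the $F$-anisotropy of the stabilizer, hence of each diagonal quadratic form $q_j$ of rank $s_j$) with the classical fact that over a totally imaginary number field every non-degenerate quadratic form of dimension $\geq 5$ is isotropic. The paper phrases this last step as a contradiction and simply cites it as well known, while you spell out the Hasse--Minkowski argument, but the logical content is identical.
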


\begin{proof}
Since $\ul{p}(\pi)=[(2n_1)^{s_1}(2n_2)^{s_2}\cdots(2n_k)^{s_k}] \in \mathfrak{n}^m(\pi)$,
$\pi$ must have a nonzero
$\psi_{\underline{p}(\pi), \underline{a}}$-Fourier coefficient
for some $\ul{a}=\{a_1, \ldots, a_r\}$ with all $a_i \in F^*/(F^*)^2$, and
$r = \sum_{i=1}^k s_i$.

Assume that there is some $1 \leq j \leq k$, such that $s_j \geq 5$.
Then, there is quadratic form
$$
\diag(a_{\sum_{i=1}^{j-1}s_i +1}, a_{\sum_{i=1}^{j-1}s_i +2},
\ldots, a_{\sum_{i=1}^{j}s_i}),
$$
with dimension $s_j \geq 5$.
Since $F$ is a totally imaginary number field,
it is well-known that any non-degenerate quadratic form of dimension bigger than or equal to $5$
is $F$-isotropic. Hence the quadratic form
$$
\diag(a_{\sum_{i=1}^{j-1}s_i +1}, a_{\sum_{i=1}^{j-1}s_i +2},
\ldots, a_{\sum_{i=1}^{j}s_i})
$$
is $F$-isotropic.
On the other hand, by Theorem \ref{main},
the quadratic form
$$
\diag(a_{\sum_{i=1}^{j-1}s_i +1}, a_{\sum_{i=1}^{j-1}s_i +2},
\ldots, a_{\sum_{i=1}^{j}s_i})
$$
is $F$-anisotropic for every $1 \leq j \leq k$. This contradiction implies that
for all $i=1,2,\cdots, k$, the inequality $s_i \leq 4$ must hold.
\end{proof}

\begin{rmk}
The assumption that $F$ is totally imaginary in Theorem \ref{thm1} is not merely for a technical reason. Without it 
the statement of the theorem may no longer be true.
\end{rmk}

\end{document}